\documentclass[reqno,11pt]{amsart}
\usepackage{amsmath,amssymb,latexsym,mathrsfs}
\usepackage{mathtools}
\usepackage[dvipsnames]{xcolor}
\usepackage{graphicx}
\usepackage{enumitem}
\usepackage[colorlinks=true,urlcolor=blue,
citecolor=red,linkcolor=blue,linktocpage,pdfpagelabels,
bookmarksnumbered,bookmarksopen]{hyperref}
\usepackage[english]{babel}
\usepackage[left=2.6cm,right=2.6cm,top=2.9cm,bottom=2.9cm]{geometry}
\usepackage[hyperpageref]{backref}
\usepackage{comment}
\usepackage[normalem]{ulem}
\usepackage{pdfcomment}
\usepackage[textsize=small]{todonotes}

\numberwithin{equation}{section}

\newtheorem{theorem}{Theorem}[section]
\newtheorem{lemma}[theorem]{Lemma}
\newtheorem{corollary}[theorem]{Corollary}
\newtheorem{proposition}[theorem]{Proposition}
\theoremstyle{definition}

\theoremstyle{remark}
\newtheorem{remark}[theorem]{Remark}

\newtheorem{theoremletter}{Theorem}
\newtheorem{propositionletter}[theoremletter]{Proposition}

\newcommand{\ud}{\,\mathrm{d}}
\newcommand{\loc}{\mathrm{loc}}

\DeclareMathOperator{\supp}{supp}
\DeclareMathOperator{\dist}{dist}

\newcommand{\R}{\mathbb{R}}

\newcommand{\Ss}{\mathbb{S}}
\newcommand{\eps}{\varepsilon}

\title[Yamabe stability optimizers]{Existence of Yamabe stability 
optimizers\thanks{This work was partially supported by Fundação de Amparo à Pesquisa 
do Estado de São Paulo (FAPESP, S\~ao Paulo Research Foundation) \#2020/07566-3, 
\#2021/15139-0, and \#2023/15567-8, Conselho Nacional de Pesquisa Científica 
(CNPq, National Council for Scientific and Technological Development) \#409764/2023-0, 
\#443594/2023-6, \#441922/2023-6, and \#306014/2025-4,
Deutsche Forschungsgemeinschaft (DFG, German Research Foundation) \#555837013 and \#561401741, and
Hong Kong General Research Fund (HKGRF) \#14309824.}}

\author[J.\,H.\,Andrade]{Jo\~ao Henrique Andrade}
\address{Department of Mathematics, University of S\~ao Paulo, 05508-090, S\~ao Paulo-SP, Brazil}
\email{andradejh@ime.usp.br}

\author[T.\,K\"onig]{Tobias K\"onig}
\address{Institut f\"ur Mathematik, Goethe-Universit\"at Frankfurt, Robert-Mayer-Str. 10, 60325 Frankfurt am Main, Germany}
\email{koenig@mathematik.uni-frankfurt.de}

\author[J.\,Ratzkin]{Jesse Ratzkin}
\address{Department of Mathematics, Universit\"{a}t W\"{u}rzburg, 97070, W\"{u}rzburg-BA, Germany}
\email{jesse.ratzkin@mathematik.uni-wuerzburg.de}

\author[J.\,Wei]{Juncheng Wei}
\address{Department of Mathematics, Chinese University of Hong Kong, Shatin, NT, Hong Kong}
\email{wei@math.cuhk.edu.hk}

\keywords{Yamabe problem,
Sobolev inequality, stability constant, Bianchi--Egnell inequality, Aubin--Schoen test functions}
\subjclass[2020]{35J60, 46E35, 58J05, 53C21}
\date{}

\begin{document}

\begin{abstract}
We prove the existence of stability optimizers for the Yamabe inequality on closed Riemannian 
manifolds of dimension at least three with positive Yamabe invariant that satisfy two threshold 
conditions. Remarkably, the compactness threshold 
we uncover is different from the special case of the round sphere treated previously in \cite{arXiv:2211.14185}.
More precisely, it is given by sequences blowing up in one instead of two bubbles, reflecting the compactness of 
Yamabe minimizers in the non-spherical case.
Using the classical asymptotic analysis of Aubin--Schoen test functions, we prove that the stability constant 
is strictly below the one-bubble threshold in dimension at least six and when the manifold is not locally 
conformally flat. In the complementary case, namely in dimensions three through five or when the manifold 
is locally conformally flat, we find a new positive-mass-type condition which is sufficient for the strict 
inequality.
\end{abstract}

\maketitle

\section{Introduction and main result}\label{sec:intro}

The Yamabe problem, posed by Yamabe \cite{Yam} and resolved through successive 
contributions of Trudinger \cite{Tru}, Aubin \cite{Aub}, and Schoen \cite{Sch1} (along 
with the work of many others), 
asks whether each conformal class on a closed Riemannian manifold contains a metric 
of constant scalar curvature. The solution proceeds by minimizing the Yamabe quotient, 
whose infimum defines the Yamabe constant of the conformal class. 
On 
the round sphere $\mathbb S^n$, this minimization is equivalent to the sharp Sobolev 
inequality on Euclidean space, whose extremals were classified independently by 
Aubin \cite{Aub} and Talenti \cite{Tal}, and form a noncompact family parametrized 
by dilations and translations. The reader will find a unified account of the 
resolution of the Yamabe problem in the wonderful survey article by Lee and 
Parker \cite{LP}.

Given a sharp inequality, one naturally asks about its quantitative 
stability. In other words, how well does the deficit between the two sides of the 
inequality control the distance between a given function and the set of optimizers? 
Bianchi and Egnell \cite{BE} obtained the first such estimate for the Sobolev 
inequality on Euclidean space. Their 
result has recently
been refined by several authors; we refer to 
Dolbeault {\it et al.} \cite{DEFFL} and the references therein. In \cite{ENS}, 
Engelstein, Neumayer and Spolaor prove a Bianchi-Egnell-type stability inequality for the 
Yamabe functional on general manifolds (see \eqref{ENS_ineq} below). A fundamental and 
wide open question asks for the exact value of the best constant in the stability 
inequality and the regime which attains equality. The results of \cite{BE, ENS, DEFFL} 
leave open the simpler question whether the best stability constant is itself attained, 
that is, whether there exists a 
function that saturates the stability inequality. Previously, the second 
author \cite{arXiv:2211.14185} proved there exists a stability optimizer 
for the Euclidean Sobolev inequality, and in the present paper we 
investigate the setting of a compact Riemannian manifold. The case of 
general manifolds presents at least two additional conceptual difficulties, namely the possible 
degeneracy of the stability inequality \eqref{ENS_ineq}, and the fact that the Yamabe optimizers 
are generally not explicit.

\medskip
\noindent{\bf Setup and definitions.}
After this short synopsis, let us be more precise.
We let $(M,g)$ be a smooth closed $n$-dimensional Riemannian manifold with $n \geqslant 3$ and 
scalar curvature $R_g$. We consider the quadratic form
\[
\mathcal{E}[u] := \int_M u L_g u \ud V_g = \int_M \left(|\nabla u|^2 + c_n R_g u^2\right) \ud V_g,
\]
where
\[
c_n = \frac{n-2}{4(n-1)} \quad {\rm and} \quad L_g = -\Delta_g + c_n R_g.
\]
The associated \emph{Yamabe quotient functional} is
\[
\mathcal{Q}(u) := \frac{\mathcal{E}[u]}{\|u\|_{L^{p}(M)}^2},
\quad {\rm with} \quad p = 2^*:= \frac{2n}{n-2}.
\]
Recall that $2^*=\tfrac{2n}{n-2}$ is the critical exponent for the Sobolev embedding, which 
was one complicating factor in the original resolution of the Yamabe problem.
By the solution of the Yamabe problem, the \emph{best Yamabe constant}
\[
Y(M,[g]) := \inf_{\substack{u \in \mathcal{C}^\infty(M)\\ u \geqslant 0}}\mathcal{Q}(u)
\]
always admits a minimizer. 
Observe that, by the maximum principle, one can replace the 
inequality $u \geqslant 0$ in the definition of the Yamabe invariant by the strict 
inequality $u>0$. 

On the round sphere $(\Ss^n, g_\circ)$, the Yamabe constant equals
\[
Y(\Ss^n,[g_\circ]) = \frac{n(n-2)}{4}\,\omega_n^{2/n},
\]
where $\omega_n = |\Ss^n|$ is the volume of the unit $n$-sphere. We denote by
\begin{equation}\label{eq:optimizer-set}
\mathscr{O} := \left\{ u \in \mathcal{C}^\infty(M) : \mathcal{Q}(u) = Y(M,[g])\; {\rm and} \;  u \geqslant 0 \right\}
\end{equation}
the nonempty set of Yamabe optimizers.

Since $Y(M,[g]) < Y(\Ss^n,[g_\circ])$ whenever $(M,g)$ is not conformally equivalent 
to the round sphere $(\Ss^n, g_\circ)$, the set
\begin{equation}
\label{O1 compact}
\mathscr{O}_1 := \{ h \in \mathscr{O} : \|h\|_{L^p(M)} = 1 \}
\end{equation}
is compact in $\mathcal{C}^2(M)$. 
This fact is well known, 
see, {\it e.g.}, \cite[Section 4]{LP}. Since the compactness of $\mathscr O_1$ is 
crucial to our arguments in this paper, we give a detailed proof in 
Lemma \ref{lemma compactness} below.

In what follows, we shall always assume that we are in the positive case, namely
\[
Y(M,[g]) > 0,
\]
so that 
\[\|u\| := \mathcal E[u]^{1/2}
\]
induces a norm on $H^1(M)$ that 
is equivalent to the standard norm. Throughout, 
we write 
\[ Y_\circ := Y(\Ss^n,[g_\circ]) \qquad \text{ and } \qquad Y_M := Y(M,[g])
\]
for brevity.
We are interested in the quantitative stability of the Yamabe inequality 
\begin{equation} \label{yamabe_ineq}
\mathcal{E}[u] \geqslant Y(M,[g]) \|u\|_{L^p(M)}^2 \quad {\rm for \ all} \quad u\in H^1(M).
\end{equation}
In the case that $(M,g)$ is conformally equivalent to $(\Ss^n, g_\circ)$, 
by conformal invariance \eqref{yamabe_ineq} reduces to the classical 
Sobolev inequality in Euclidean space, 
\begin{equation}\label{eucl_sobolev} 
\int_{\R^n} |\nabla u|^2 \ud x \geqslant Y(\Ss^n,[g_\circ]) 
\left(\int_{\R^n}|u|^p\ud x\right)^{2/p} \quad {\rm for \ all} \quad u \in \dot{H}^1(\R^n).
\end{equation} 
One can transfer between \eqref{yamabe_ineq} 
and \eqref{eucl_sobolev} when $(M,g) \simeq (\Ss^n, g_\circ)$ using stereographic 
projection. The 
inequality \eqref{eucl_sobolev}
is saturated precisely by scalar multiples of the Aubin--Talenti bubbles 
$U_{\mu,x_0}\in \mathcal{C}^\infty(\R^n)$ defined as 
\[
U_{\mu,x_0}(y) = 
[n(n-2)]^\frac{n-2}{4} \left(\frac{\mu}{\mu^2 + |y - x_0|^2}\right)^{\frac{n-2}{2}}
\quad \text{for some} \quad \mu > 0 \quad \text{and} \quad x_0 \in \R^n.
\]
Here, we observe that the normalizing constant $\alpha_n := [n(n-2)]^{(n-2)/4}>0$ is chosen such that 
\[
-\Delta U_{\mu, x_0} 
= U_{\mu, x_0}^{p-1} \quad {\rm in} \quad \R^n.
\]
Furthermore, Bianchi and Egnell \cite{BE}
proved that the deficit controls the $\dot{H}^1$-distance to the
family of bubbles, namely
\begin{equation*}
    \|\nabla u\|^2_{L^2(\R^n)} - Y(\Ss^n,[g_\circ]) \|u\|_{L^p(\R^n)}^2 \gtrsim 
    \inf_{\substack{\mu > 0,\, x_0 \in \R^n, \\ {c \neq 0}}} \|\nabla(u - 
    c U_{\mu,x_0})\|_{L^2(\R^n)}^2 \quad {\rm for \ all} \quad u \in \dot{H}^1(\R^n).
\end{equation*}

On closed manifolds, the analog of this result was established by Engelstein, Neumayer 
and Spolaor {\cite{ENS}}, who proved the following quantitative stability estimate for the Yamabe inequality.
Here and in the following, we denote 
\begin{equation}
    \label{dist definition}
    \dist(u, \mathscr{O}) := \inf_{h \in \mathscr{O}} \mathcal{E}[u-h]^{1/2} . 
\end{equation} 

\begin{theoremletter}[{\cite{ENS}}]
\label{theorem ENS}
Let $(M,g)$ be a closed $n$-dimensional Riemannian manifold with $n \geqslant 3$ and $Y(M,[g]) > 0$. Then
there exist $\gamma = \gamma(M,g) \geqslant 2$ and $c > 0$ such that
\begin{equation} \label{ENS_ineq} 
\mathcal{E}[u]^{(\gamma-2)/2} \left( \mathcal{E}[u] - Y(M,[g]) \|u\|_{L^p(M)}^2 \right) 
\geqslant c\, \dist(u,\mathscr{O})^\gamma
\quad \text{for all} \quad 0 \leqslant u \in H^1(M). 
\end{equation} 
\end{theoremletter}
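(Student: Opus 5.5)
We argue by contradiction and split into a local regime near $\mathscr O$ and a global regime away from it. Both sides of \eqref{ENS_ineq} are homogeneous of degree $\gamma$ under $u\mapsto tu$, so we may normalize $\mathcal E[u]=1$. The left side is then the deficit $\delta(u):=1-Y_M\|u\|_{L^p}^2$.

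\emph{Global regime.} Suppose first that $\dist(u,\mathscr O)\geqslant\eps_0$ for a fixed $\eps_0>0$. If the inequality failed along a sequence $u_k\geqslant 0$ with $\mathcal E[u_k]=1$, then $\delta(u_k)\to0$, so $(u_k)$ would be a nonnegative minimizing sequence for $Y_M$. Struwe's profile decomposition applies to $(u_k)$. Any bubble would carry energy at least $Y_\circ^{n/2}>Y_M^{n/2}$, and a minimizing sequence has total energy asymptotically $Y_M^{n/2}$ in the appropriate normalization. Since $Y_M<Y_\circ$ in the non-spherical case (the spherical case reduces to \cite{BE}), no bubble can form. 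Hence $u_k\to h$ strongly in $H^1$ with $h\in\mathscr O$, which contradicts $\dist(u_k,\mathscr O)\geqslant\eps_0$. So $\delta\geqslant c(\eps_0)$ in this regime, and since $\dist(u,\mathscr O)\leqslant \mathcal E[u]^{1/2}+\sup_{\mathscr O_1}\|h\|\lesssim 1$, the inequality holds for any $\gamma$.

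\emph{Local regime.} By compactness of $\mathscr O_1$ (Lemma \ref{lemma compactness}) and scaling, the infimum in \eqref{dist definition} is attained at some $h$. Write $u=h+\rho$ with $\rho$ orthogonal in $\mathcal E$ to the tangent cone of $\mathscr O$ at $h$. The set $\mathscr O$ is contained in the zero set $\{\nabla\mathcal Q=0\}$ near $\mathscr O_1$, a real-analytic functional on $H^1$ after restricting to positive functions, since $t\mapsto t^{p}$ is analytic near $h>0$. The key tool is a \emph{Łojasiewicz–Simon inequality} for $\mathcal Q$ near each $h\in\mathscr O_1$:
\[
|\mathcal Q(u)-Y_M|^{1-\theta}\leqslant C\|\nabla\mathcal Q(u)\|_{H^{-1}},\qquad \theta\in(0,1/2],
\]
which is available because the linearization $L_g-(p-1)Y_M h^{p-2}$ is Fredholm. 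To use it, I would combine it with a Lyapunov–Schmidt reduction onto the finite-dimensional kernel $K$ of this linearization. Decompose $\rho=\rho_K+\rho_\perp$. On $K^\perp$ the second variation is coercive, giving $\delta\gtrsim\|\rho_\perp\|^2$. On $K$ the reduced functional $f$ is analytic, $f\geqslant 0$, and $f$ vanishes exactly on the reduced copy of $\mathscr O$. The finite-dimensional Łojasiewicz inequality then gives $f(z)\gtrsim\dist(z,f^{-1}(0))^{\gamma}$ for some $\gamma\geqslant2$. Compactness of $\mathscr O_1$ lets us take uniform constants by finitely covering $\mathscr O_1$.

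\emph{Main obstacle.} The delicate step is transferring the finite-dimensional distance estimate back to $\dist(u,\mathscr O)$. In particular we must control cross terms between $\rho_K$ and $\rho_\perp$ so that $\delta\gtrsim\|\rho_\perp\|^2+\dist(\rho_K,\cdot)^\gamma$. The reduction must also respect the constraint $u\geqslant0$; this is harmless near $h>0$ in $H^1$ only after passing through $L^p$-smallness. Finally, the result does not depend on the normalization $\mathcal E[u]=1$, because of the prefactor $\mathcal E[u]^{(\gamma-2)/2}$.
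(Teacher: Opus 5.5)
The paper gives no proof of Theorem~A; it is quoted from \cite{ENS}, whose argument has the same two-regime structure as yours: precompactness of minimizing sequences away from $\mathscr O$ when $Y_M<Y_\circ$, and a Lyapunov--Schmidt reduction plus the finite-dimensional {\L}ojasiewicz inequality near $\mathscr O$.

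Your global regime is fine up to a technicality. As you use it (bubbles carrying energy $Y_\circ^{n/2}$), Struwe's decomposition is a statement about Palais--Smale sequences, not about arbitrary minimizing sequences of $\mathcal Q$. Either pass through Ekeland's principle, or more simply use the Brezis--Lieb splitting together with Proposition~\ref{proposition Sobolev for bubbles}, as in Claim~2 of the proof of Theorem~\ref{theorem existence stab opt gen M}.

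The genuine gap is the local step. You name it as the main obstacle but do not carry it out. Moreover, the intermediate bound you aim for is not what coercivity delivers, and it can fail. That bound is $\delta\gtrsim\|\rho_\perp\|^2+\dist(\rho_K,Z)^\gamma$, with $u=h+\rho_K+\rho_\perp$ split around the base point $h$.

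\begin{itemize}
\item The component $\rho_\perp$ contains the Lyapunov--Schmidt correction $\phi(\rho_K)=\mathcal O(|\rho_K|^2)$, which costs no coercive energy. Take $u=h+z+\phi(z)$; its nearest point in $\mathscr O$ is $h$ when $h$ is isolated in $\mathscr O_1$. Then $\mathcal Q(u)-Y_M=f(z)-Y_M$, which {\L}ojasiewicz bounds below only by $|z|^\gamma$. On the other hand, $\|\rho_\perp\|^2=\|\phi(z)\|^2$ is of order $|z|^4$. So your bound breaks down whenever $f-Y_M$ vanishes to order higher than four along a direction in which $\phi$ has a nonzero quadratic part.
\item Put differently, the cubic cross terms $D^3\mathcal Q(h)[\rho_K,\rho_K,\rho_\perp]\sim|\rho_K|^2\|\rho_\perp\|$ are as large as the coercive term in the relevant regime $\|\rho_\perp\|\sim|\rho_K|^2$. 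Young's inequality then leaves a term $|\rho_K|^4$ that $\dist(\rho_K,Z)^\gamma$ does not control.
\item Coercivity ``on $K^\perp$'' is also false as stated. Testing the equations for $h$ and $k\in K$ against each other gives $\int_M k\,L_g h\ud V_g=Y_M\int_M h^{p-1}k\ud V_g=0$, so $h\in K^\perp$. But $D^2\mathcal Q(h)[h,h]=0$ by $0$-homogeneity, so coercivity holds only on $K^\perp\cap\{h\}^\perp$.
\end{itemize}

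The missing idea is to re-center the expansion on the Lyapunov--Schmidt graph instead of at $h$.

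\begin{enumerate}
\item Rescale $u$ so that $u-h\perp h$; this is allowed because both sides of \eqref{ENS_ineq} are $\gamma$-homogeneous.
\item Let $z:=P_K(u-h)$ and $v_z:=h+z+\phi(z)$, where $\phi(z)\in K^\perp\cap\{h\}^\perp$ is determined by $P_{K^\perp\cap\{h\}^\perp}\nabla\mathcal Q(v_z)=0$. Set $w:=u-v_z\in K^\perp\cap\{h\}^\perp$.
\item Then $D\mathcal Q(v_z)[w]=0$ identically, so there is no first-order cross term.
\item Since $v_z$ is $\mathcal C^{2,\alpha}$-close to $h$, it is bounded above and away from zero. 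The elementary Taylor bound for $t\mapsto|t|^p$ with $p>2$ then gives an $\mathrm o(\|w\|^2)$ remainder; this needs no analyticity and no sign condition on $u$.
\item $D^2\mathcal Q(v_z)$ is uniformly coercive on $K^\perp\cap\{h\}^\perp$ for small $z$. Hence
\[
\mathcal Q(u)-Y_M\geqslant f(z)-Y_M+c\|w\|^2\geqslant c\bigl(\dist(z,Z)^\gamma+\|w\|^2\bigr).
\]
\item Take $z^*\in Z$ with $|z-z^*|=\dist(z,Z)$. Then $v_{z^*}\in\mathscr O$, and the Lipschitz bound on $\phi$ gives $\dist(u,\mathscr O)\leqslant\|w\|+C\dist(z,Z)$.
\item Finally, $\gamma\geqslant2$ and $\|w\|\leqslant1$ give $\|w\|^\gamma\leqslant\|w\|^2$, which closes the local estimate.
\end{enumerate}

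In this scheme the nearest-point choice of $h$ and the {\L}ojasiewicz--Simon gradient inequality play no role. Analyticity is needed only for $f(z)=\mathcal Q(v_z)$, and it holds because the reduction can be carried out in $\mathcal C^{2,\alpha}$, where $v_z$ stays positive. You should drop the claim that $\mathcal Q$ is real-analytic on $H^1$ ``after restricting to positive functions''. Positive functions do not form an $H^1$-open set, and $u\mapsto\|u\|_{L^p(M)}^p$ is in general only finitely differentiable there.
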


\begin{remark}
    In fact, Engelstein, Neumayer and Spolaor prove \eqref{ENS_ineq} for all closed Riemannian 
    manifolds, not just those with positive scalar curvature. However, in the case 
    that $Y(M,[g]) \leqslant 0$ one must measure the distance to the set of 
    optimizers using the either the $L^p$ norm or the standard $H^1$ norm on $M$ instead of \eqref{dist definition}.  
    However, for the level of precision needed 
    in this paper, we essentially cannot afford to lose constants coming from norm equivalence 
    in the computation of stability constants, and we often rely on the favorable algebraic 
    structure given by the definition \eqref{dist definition} of the distance. Hence, our arguments here do 
    not directly carry over to the case $Y(M, [g]) \leqslant 0$, which we leave to future work. 
\end{remark}

One can find examples of closed Riemannian manifolds such that one must take 
$\gamma>2$ in \eqref{ENS_ineq} (cf. \cite{F}), but Engelstein, Neumayer, and Spolaor show that generically 
one can take $\gamma=2$, as in the classical case of the Sobolev inequality. 
Let us denote by $c_{\gamma}(M,g)$ the largest admissible constant in 
Theorem~\ref{theorem ENS}, equivalently
\begin{equation}\label{eq:ENS-def}
c_{\gamma}(M,g) = \inf_{\substack{u \in H^1(M) \setminus \mathscr{O}\\ u\geqslant 0}} \mathcal{S}_\gamma(u),
\end{equation}
where $\mathcal{S}_\gamma: H^1(M) \setminus \mathscr{O} \to \R$ is the \emph{stability quotient}
\begin{equation} \label{defn_stability} 
\mathcal{S}_\gamma(u) := \frac{\mathcal{E}[u]^{(\gamma-2)/2} \left( \mathcal{E}[u] - Y(M,[g]) \|u\|_{L^p(M)}^2 \right)}{\dist(u,\mathscr{O})^\gamma}.
\end{equation} 

A natural question left open by Theorem~\ref{theorem ENS} is whether $c_{\gamma}(M,g)$ 
is \emph{attained}, that is, whether there exists $u \in H^1(M) \setminus \mathscr{O}$ 
satisfying $\mathcal{S}_\gamma(u) = c_{\gamma}(M,g)$. In the case $M = \Ss^n$ 
the second author \cite{arXiv:2211.14185} proved that the infimum is attained, settling a question left 
open since \cite{BE}. His approach identifies two concentration-compactness thresholds:
a \emph{local threshold} arising from sequences converging to an optimizer and a \emph{bubbling 
threshold} arising from concentrating sequences, and shows that their strict separation 
from $c_{\gamma}(\Ss^n, g_\circ)$ implies compactness of minimizing sequences.

In the present paper, we adapt this two-threshold strategy to the general manifold 
setting. As already mentioned above, in doing so we face two conceptual 
challenges which are not present in the Euclidean case, namely the possible degeneracy 
of the stability inequality \eqref{ENS_ineq} (when $\gamma > 2$) and the fact that the 
minimizers are not explicit. To deal with the latter, a crucial use will be the compactness 
of $\mathcal O_1$ proved in Lemma \ref{lemma compactness}.

To include the degenerate setting, 
we need to analyze the convexity properties of the stability quotient $\mathcal S_\gamma$ in 
a setting that allows for general $\gamma \geqslant 2$: the main ingredient here is Lemma \ref{lemma:F-no-local-min}. 
We prove attainability of $c_{\gamma}(M,g)$ under explicit geometric conditions. 

\medskip
\noindent{\bf Main results.}
Our first main result concerns the attainability of the best stability constant $c_{\gamma}(M,g)$. 
The key observation is that two threshold constants govern the attainability, namely
\begin{equation}\label{eq:stability_constant_loc}
c_{\gamma}^{\mathrm{loc}}(M,g):= \inf \left\{ \liminf_{k \to \infty} 
\mathcal{S}_\gamma(u_k) : (u_k)_{k \in \mathbb{N}} \subset H^1(M),\, u_k \to h 
\text{ for some } h \in \mathscr{O} \right\}
\end{equation}
and
\begin{equation}\label{eq:stability_constant_bubble}
c_{\gamma}^{\rm bub}(M,g) := \inf \left\{ \liminf_{k \to \infty} 
\mathcal{S}_\gamma(u_k) : (u_k)_{k \in \mathbb{N}} \subset H^1(M),\, \|u_k\|_{L^p(M)} = 1,
\, u_k \rightharpoonup 0 \right\}.
\end{equation}

The local constant defined in \eqref{eq:stability_constant_loc} captures the 
behavior of minimizing sequences that converge to an optimizer, 
while the constant given in \eqref{eq:stability_constant_bubble} captures the 
behavior of sequences that concentrate and form bubbles. As we show in 
Corollary~\ref{cor:bubble-formula} below,
\[
c_{\gamma}^{\rm bub}(M,g) = 1 - \frac{Y(M,[g])}{Y(\Ss^n,[g_\circ])} \quad {\rm for \ all} \quad \gamma \geqslant 2.
\]

\begin{theorem}
\label{theorem existence stab opt gen M}
Let $(M,g)$ be a closed $n$-dimensional Riemannian manifold with $n \geqslant 3$ such 
that $0 < Y(M,[g]) < Y(\Ss^n,[g_\circ])$ and let $\gamma \geqslant 2$ be the stability
exponent. If 
\begin{equation}
\label{strict ineq thm}
c_{\gamma}(M,g) < c_{\gamma}^{\mathrm{loc}}(M,g) \quad {\rm and} 
\quad c_{\gamma}(M,g) < c_{\gamma}^{\rm bub}(M,g)
\end{equation}
then every minimizing sequence $(u_k)_{k \in \mathbb{N}}$ for $c_{\gamma}(M,g)$ 
converges strongly in $H^1(M)$ up to a subsequence. In particular, in this case 
there exists an optimizer $u \in H^1(M) \setminus \mathscr{O}$ for $c_{\gamma}(M,g)$.
\end{theorem}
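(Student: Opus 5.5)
We argue by concentration-compactness on a minimizing sequence, using the two strict inequalities in \eqref{strict ineq thm} to exclude loss of compactness. The quotient $\mathcal S_\gamma$ is invariant under $u\mapsto tu$ for $t>0$, because $\mathscr O$ is a cone and $\dist(tu,\mathscr O)=t\dist(u,\mathscr O)$. So we may take a minimizing sequence $u_k\geqslant 0$ with $\|u_k\|_{L^p(M)}=1$ and $\mathcal S_\gamma(u_k)\to c_\gamma(M,g)$. The norm $\mathcal E[u_k]$ is bounded: the numerator of $\mathcal S_\gamma$ grows like $\mathcal E[u_k]^{\gamma/2}$, while the infimum defining $\dist(u_k,\mathscr O)$ is at most $\mathcal E[u_k]^{1/2}$ (take $h=0$, or a small multiple of an optimizer). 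Hence if $\mathcal E[u_k]\to\infty$ then $\mathcal S_\gamma(u_k)\geqslant 1-o(1)$, which is larger than $c^{\rm bub}_\gamma=1-Y_M/Y_\circ$ and contradicts $c_\gamma<c^{\rm bub}_\gamma$. Passing to a subsequence, $u_k\rightharpoonup u$ in $H^1(M)$, $u_k\to u$ in $L^2$, and $u\geqslant 0$.

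\emph{Step 1: $u\neq 0$.} If $u=0$, then $(u_k)$ is admissible in \eqref{eq:stability_constant_bubble}, so $\liminf \mathcal S_\gamma(u_k)\geqslant c^{\rm bub}_\gamma>c_\gamma$, a contradiction.

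\emph{Step 2: splitting.} Write $u_k=u+v_k$ with $v_k\rightharpoonup 0$. Then $\mathcal E[u_k]=\mathcal E[u]+\mathcal E[v_k]+o(1)$, and by Brezis--Lieb $\|u_k\|_p^p=\|u\|_p^p+\|v_k\|_p^p+o(1)$. For the distance, we use the compactness of $\mathscr O_1$ from Lemma~\ref{lemma compactness}. Near-optimal $h_k\in\mathscr O$ are bounded, so a subsequence converges strongly to some $h\in\mathscr O$. This gives
\[\dist(u_k,\mathscr O)^2=\dist(u,\mathscr O)^2+\mathcal E[v_k]+o(1).\]
The lower bound holds since $\mathcal E[u_k-h_k]=\mathcal E[u-h_k]+\mathcal E[v_k]+o(1)$ and $\mathcal E[u-h_k]\geqslant\dist(u,\mathscr O)^2$. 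The upper bound follows by testing with a minimizer for $u$. Set
\[a=\mathcal E[u],\qquad b=\lim\mathcal E[v_k],\qquad \alpha=\|u\|_p^p,\qquad \beta=\lim\|v_k\|_p^p,\qquad d=\dist(u,\mathscr O)^2.\]
Then $\alpha+\beta=1$, and Sobolev on $M$ gives $b\geqslant Y_\circ\beta^{2/p}$. This uses the sharp constant with the lower-order term vanishing for weakly null sequences, i.e.\ Aubin's $\varepsilon$-inequality.

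\emph{Step 3: excluding $b>0$.} The limiting quotient is
\[\frac{(a+b)^{(\gamma-2)/2}\bigl(a+b-Y_M(\alpha+\beta)^{2/p}\bigr)}{(d+b)^{\gamma/2}}.\]
We must show it strictly exceeds $\min\{\mathcal S_\gamma(u),\,c^{\rm bub}_\gamma\}$ whenever $b>0$, or, when $u\in\mathscr O$, that it is at least $c^{\rm bub}_\gamma$. Since $2/p<1$, concavity gives $(\alpha+\beta)^{2/p}\leqslant \alpha^{2/p}+\beta^{2/p}$. Using this, $a-Y_M\alpha^{2/p}\geqslant c_\gamma\, d^{\gamma/2}a^{(2-\gamma)/2}$, and $b-Y_M\beta^{2/p}\geqslant (1-Y_M/Y_\circ)\,b$, we obtain an inequality of the form
\[\text{numerator}\geqslant F(a,b,d),\]
and reduce to a statement about an explicit function of two or three real variables. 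For $\gamma=2$ this is a mediant argument: $(A+B)/(d+b)\geqslant\min(A/d,\,B/b)$, with strictness from the concavity gap, which is strict when $\alpha,\beta>0$. For $\gamma>2$ the prefactor $(a+b)^{(\gamma-2)/2}$ and the power $\gamma/2$ spoil the mediant structure, and we expect to need the convexity analysis of Lemma~\ref{lemma:F-no-local-min} to conclude that the minimum over the splitting is attained only at an endpoint. This is the main obstacle. The endpoint $b=0$ gives $\mathcal S_\gamma(u)$; the endpoint $a\to 0$ gives at least $c^{\rm bub}_\gamma$. Since $c_\gamma<c^{\rm bub}_\gamma$, we conclude $b=0$, i.e.\ $u_k\to u$ strongly in $H^1(M)$.

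\emph{Step 4: $u\notin\mathscr O$.} If $u\in\mathscr O$, then $u_k\to u$ strongly with $u_k\notin\mathscr O$. Such a sequence is admissible in \eqref{eq:stability_constant_loc}, so $c_\gamma=\liminf\mathcal S_\gamma(u_k)\geqslant c^{\rm loc}_\gamma>c_\gamma$, a contradiction. Therefore $u\in H^1(M)\setminus\mathscr O$. Since $\mathcal S_\gamma$ is continuous at points off $\mathscr O$, $\mathcal S_\gamma(u)=c_\gamma(M,g)$ and $u$ is the desired optimizer.
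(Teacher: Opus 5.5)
\textbf{Verdict: genuine gap, in Step~3 for $\gamma>2$.} Your outline is the paper's: the normalization, the $H^1$ bound, and $u\not\equiv 0$ via $c_\gamma^{\rm bub}$. It also has the splitting $\dist(u_k,\mathscr O)^2=\dist(u,\mathscr O)^2+\mathcal E[v_k]+\mathrm{o}(1)$, which is Lemma~\ref{lemma m continuity} combined with Lemma~\ref{lemma DEFFL distance}. Aubin's inequality for the weakly null part and the exclusion of $u\in\mathscr O$ via $c_\gamma^{\loc}$ also match. Your mediant argument for $\gamma=2$ is complete; strictness comes simply from $c_\gamma^{\rm bub}>c_\gamma$ and $b>0$, so the concavity gap is not needed.

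The gap is Step~3 for $\gamma>2$, which is the heart of the theorem and which you leave as an expectation. Lemma~\ref{lemma:F-no-local-min} also cannot just be quoted in your variables. It concerns a one-variable function $\Psi(X)$ whose parameters are frozen by $u$. With $\|u_k\|_{L^p(M)}=1$, your $a,\alpha,d$ are already fixed by the weak limit, so ``the endpoint $a\to 0$'' is not an endpoint of any family you have set up. The paper produces $\Psi$ in three moves:
\begin{itemize}
\item it renormalizes so that $\|u\|_{L^p(M)}=1$;
\item it writes the denominator as $(\mathcal E[u]+\mathcal E[v_k]-Y_M\mathsf m(u))^{\gamma/2}$;
\item it replaces $\|v_k\|_{L^p}^p$ by its upper bound $X_k^{p/2}+\mathrm{o}(1)$, where $X_k=\mathcal E[v_k]/Y_\circ$.
\end{itemize}
This gives $c_\gamma+\mathrm{o}(1)\geqslant\Psi(X_k)$ with $\Psi(0^+)=\mathcal S_\gamma(u)$ and $\Psi(+\infty)=c_\gamma^{\rm bub}$. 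The absence of interior local minima then forces $X_k\to 0$.

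Your own bounds can close the argument without that lemma. Note $a>0$ since $u\not\equiv 0$. Apply Jensen's inequality for the convex map $t\mapsto t^{\gamma/2}$, with weights $\frac{a}{a+b},\frac{b}{a+b}$ at the points $\frac da$ and $1$:
\[
(d+b)^{\gamma/2}\leqslant (a+b)^{(\gamma-2)/2}\bigl(d^{\gamma/2}a^{(2-\gamma)/2}+b\bigr).
\]
Insert this into your lower bound $(a+b)^{(\gamma-2)/2}\bigl(c_\gamma d^{\gamma/2}a^{(2-\gamma)/2}+c_\gamma^{\rm bub}b\bigr)$ for the numerator. This yields
\[
c_\gamma=\frac{(a+b)^{(\gamma-2)/2}(a+b-Y_M)}{(d+b)^{\gamma/2}}\geqslant c_\gamma+\frac{(c_\gamma^{\rm bub}-c_\gamma)\,(a+b)^{(\gamma-2)/2}\,b}{(d+b)^{\gamma/2}},
\]
which is impossible when $b>0$. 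The case $u\in\mathscr O$, where $d=0$, is included. This Jensen step would be a simpler alternative to the paper's second-derivative analysis of $\Psi$. As written, however, your proposal supplies neither it nor the paper's reduction, so it proves the theorem only for $\gamma=2$.
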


Before continuing, we would like to make some remarks. First, observe that the strict 
inequality $Y(M, [g]) < Y(\Ss^n, [g_\circ])$ holds
unless $(M, g)$ is conformally equivalent to the round sphere. Thus, combining 
Theorem~\ref{theorem existence stab opt gen M} with the results of \cite{arXiv:2211.14185}, we 
have a complete characterization of when a stability optimizer exists in the 
positive scalar curvature setting. 

Second, we compare the compactness thresholds from \eqref{strict ineq thm}
with those from \cite{arXiv:2211.14185} concerning the case $M = \Ss^n$. Surprisingly, the 
compactness thresholds are qualitatively different: instead
of $c_{\gamma}^{\rm bub}(M,g)$, in \cite{arXiv:2211.14185} the threshold $2 - 2^{2/p}$ appears, 
which is attained by test functions made of two bubbles, not one. Our condition \eqref{strict ineq thm} gives 
a unified picture that also clarifies the role of $2 - 2^{2/p}$ in \cite{arXiv:2211.14185}.
In both cases, 
one needs to exclude that a normalized minimizing sequence for the stability constant, in addition to its 
weak limit, develops \emph{a bubble that converges weakly to zero}. The fact that on $M = \mathbb S^n$ the 
threshold $2 - 2^{2/p}$ arises from two bubbles in total reflects its particular conformal invariance 
property, which leads to a family of  optimizers $\mathscr{O}_1$ which exhibits itself bubbling behavior. 
On the other hand, for $M$ with $Y(M,[g]) < Y(\Ss^n,[g_\circ])$ the family $\mathscr{O}_1$ is compact by 
Lemma \ref{lemma compactness} and no bubbling within it can occur. 

A similar observation is made in the recent preprint \cite{CGK2025}, which studies the stability of the optimizers for the Hardy-Sobolev 
inequality 
\[
\int_{\R^n} |\nabla u|^2 \ud y - \beta \int_{\R^n} \frac{|u|^2}{|x|^2} \ud y \gtrsim_{\beta,n} 
\left(\int_{\R^n} |u|^{2^*} \ud y\right)^{2/2^*} \quad {\rm for \ all} \quad u\in \dot{H}^1(\R^n).
\]
This is an intermediate problem
in terms of the discussion about bubbling from the last paragraph because it has dilation symmetry, but 
not translation symmetry. In \cite{CGK2025} the authors introduce a constant which they call a 'hidden 
compactness threshold'. This constant is defined as the limit of Aubin-Talenti bubbles wandering off to 
infinity and is therefore in some sense analogous to our constant $c_\gamma^{\rm bub}(M,g)$, {\it i.e.}, it 
captures the symmetries which may cause bubbling but are not inherent in the variational problem 
under study. 

An asymptotic analysis, very similar to that done by Aubin and Schoen in the solution of the 
Yamabe problem, shows that one can always approach the bubble threshold from below. This is 
the content of our second main result.

\begin{theorem}
\label{theorem c < c-bubble}
Let $(M,g)$ be a closed $n$-dimensional Riemannian manifold with $n \geqslant 6$ such 
that $0 < Y(M,[g]) < Y(\Ss^n,[g_\circ])$. If $(M,g)$ is not locally
conformally flat, then
\[
c_{\gamma}(M,g) < c_{\gamma}^{\rm bub}(M,g).
\]
\end{theorem}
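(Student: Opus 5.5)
The plan is to construct a test function $u_\eps$ for which $\mathcal S_\gamma(u_\eps) < 1 - Y_M/Y_\circ$. The natural candidate is a sum $u_\eps = h + t\,\varphi_\eps$ of a normalized Yamabe optimizer $h\in\mathscr O_1$ and an Aubin--Schoen bubble $\varphi_\eps$ concentrated at a point $x_0$ where the Weyl tensor does not vanish. Such a point exists because $n\ge 6$ and $M$ is not locally conformally flat. We may take $\varphi_\eps$ in conformal normal coordinates around $x_0$, so that, as in Aubin's computation,
\[
\frac{\mathcal E[\varphi_\eps]}{\|\varphi_\eps\|_{L^p}^2} = Y_\circ - c\,|W(x_0)|^2 \eps^4 + o(\eps^4) \quad (n\ge 7),
\]
with an extra factor $|\log\eps|$ when $n=6$. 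A single bubble alone gives $\mathcal S_\gamma(\varphi_\eps)\to 1-Y_M/Y_\circ$ by Corollary~\ref{cor:bubble-formula}. The point is that the Weyl gain of order $\eps^4$ should push the quotient strictly below this value.

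More concretely, I would test with $u_\eps = \varphi_\eps$ itself, or with $\varphi_\eps + s h$ if that is needed to control the distance. The key step is computing $\dist(\varphi_\eps,\mathscr O)^2$. Since $\mathscr O=\{c h: c\ge0,\ h\in\mathscr O_1\}$, we have
\[
\dist(u,\mathscr O)^2 = \mathcal E[u] - \sup_{h\in\mathscr O_1}\frac{\langle u,h\rangle_{\mathcal E}^2}{\mathcal E[h]}.
\]
By the compactness of $\mathscr O_1$ (Lemma~\ref{lemma compactness}), $h$ is uniformly bounded in $\mathcal C^2$. Moreover $\langle \varphi_\eps,h\rangle_{\mathcal E} = Y_M\int h^{p-1}\varphi_\eps = O(\eps^{(n-2)/2})$ uniformly in $h$, so
\[
\dist(\varphi_\eps,\mathscr O)^2 = \mathcal E[\varphi_\eps](1-O(\eps^{n-2})).
\]
Normalizing $\|\varphi_\eps\|_{L^p}=1$, the quotient becomes
\[
\mathcal S_\gamma(\varphi_\eps) = \frac{\mathcal E^{(\gamma-2)/2}(\mathcal E - Y_M)}{\mathcal E^{\gamma/2}(1-O(\eps^{n-2}))} = \Bigl(1-\frac{Y_M}{\mathcal E[\varphi_\eps]}\Bigr)\bigl(1+O(\eps^{n-2})\bigr).
\]
Note that the exponent $\gamma$ cancels completely, which is why the result holds for all $\gamma\ge2$. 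Since $\mathcal E[\varphi_\eps] = Y_\circ - c|W(x_0)|^2\eps^4+\dots$, we get $1 - Y_M/\mathcal E < 1 - Y_M/Y_\circ$ by an amount of order $\eps^4$ (or $\eps^4|\log\eps|$ when $n=6$). Here $Y_M>0$ is used. The error $O(\eps^{n-2})$ is $o(\eps^4)$ exactly when $n>6$. For $n=6$ the $|\log\eps|$ factor in Aubin's expansion supplies the needed margin.

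The main obstacle is doing this bookkeeping precisely. First, the error in the distance term must be uniform over $\mathscr O_1$, and it must be genuinely of order $\eps^{n-2}$, coming from the square of the overlap, rather than $\eps^{(n-2)/2}$. This is what makes the squared inner product essential, and it must be checked carefully in the borderline case $n=6$. Second, the cutoff errors in the Aubin--Schoen bubble must stay below the Weyl term. Both follow the classical computations of Lee--Parker, and I would invoke those expansions directly, verifying only the uniformity of the overlap with $h$.
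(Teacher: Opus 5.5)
Your proposal is correct and follows essentially the same route as the paper. Both use normalized Aubin test functions in conformal normal coordinates at a point with $W_g(x_0)\neq 0$, the distance identity $\dist(u,\mathscr O)^2=\mathcal E[u]-Y_M\,\mathsf m(u)$ with $\mathsf m(u_\mu)=\mathcal O(\mu^{n-2})$ coming from the uniform bound on $\mathscr O_1$ and the $L^1$ bound $\mathcal O(\mu^{(n-2)/2})$ on the bubble, and the comparison of $\mu^{n-2}$ against the Weyl gain $\mu^4$ (resp.\ $\mu^4\log(1/\mu)$ for $n=6$). One cosmetic slip: the denominator is $\mathcal E^{\gamma/2}(1-\mathcal O(\eps^{n-2}))^{\gamma/2}$, so $\gamma$ does not literally cancel but only enters the harmless $\mathcal O(\eps^{n-2})$ error term.
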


In the complementary regime, when $(M,g)$ is locally conformally flat 
or $3 \leqslant n \leqslant 5$, we use Schoen's Green function modified test 
functions \cite{Sch1}. Let $G_{x_0}$ denote the Green function of $L_g$ with 
pole at $x_0 \in M$, normalized so that $L_g G_{x_0} = \delta_{x_0}$, with expansion
\begin{equation}
\label{Green function expansion}
G_{x_0}(x) = \frac{1}{(n-2)\omega_{n-1}}\, r^{2-n} + \mathfrak{m}_{x_0} + \mathcal{O}(r)
\end{equation}
in conformal normal coordinates centered at $x_0$, where $\omega_{n-1} = |\Ss^{n-1}|$ 
and $\mathfrak{m}_{x_0} \in \R$ is the \emph{mass} of the Green function at $x_0$.

We say that the \emph{mass dominance condition} holds at a point $x_0 \in M$ if
the Green function mass $\mathfrak{m}_{x_0}$ satisfies
\begin{equation}
\label{mass dominance}\tag{$\mathcal{M}_{x_0}$}
\beta_n\, \mathfrak{m}_{x_0} > \frac{\gamma}{2}\,(Y(\Ss^n,[g_\circ]) - Y(M,[g]))\, \mathsf{m}_\infty(x_0),
\end{equation}
where $\beta_n > 0$ is an explicit dimensional constant (see \eqref{eq:beta-def}) and
the rescaled stability mass coefficient $\mathsf{m}_\infty(x_0) \geqslant 0$
is defined by
\[\mathsf{m}_\infty(x_0) = \frac{\alpha_n^2\,(n-2)^2\,\omega_{n-1}^2}{|\Ss^n|^{(n-2)/n}}\,
\sup_{h \in \mathscr{O}_1} \left(\int_M h^{p-1}\, G_{x_0} \ud V_g\right)^2
\]
(see Proposition~\ref{prop:moment-sharp}).
We say that $(M,g)$ satisfies the \emph{mass dominance condition} when there exists
$x_0\in M$ such that \eqref{mass dominance} holds.

With this terminology, we can state the next main result.

\begin{theorem}
\label{theorem Schoen}
Let $(M,g)$ be a closed $n$-dimensional Riemannian manifold with $n \geqslant 3$ 
such that $0 < Y(M,[g]) < Y(\Ss^n,[g_\circ])$. If either $n 
\in \{3,4,5\}$ or $(M,g)$ is locally conformally flat and satisfies the mass 
dominance condition \eqref{mass dominance}, then
\[
c_{\gamma}(M,g) < c_{\gamma}^{\rm bub}(M,g).
\]
\end{theorem}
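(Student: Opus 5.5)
We construct a test function $u_\eps = h + t\,\varphi_\eps$, where $h \in \mathscr{O}_1$ is a Yamabe optimizer and $\varphi_\eps$ is Schoen's Green-function-modified bubble centered at $x_0$. Concretely, in conformal normal coordinates near $x_0$ the function $\varphi_\eps$ equals $U_{\eps,0}$ on $B_\delta$, and it is glued to $\eps^{(n-2)/2}\alpha_n (n-2)\omega_{n-1} G_{x_0}$ outside. We then expand $\mathcal{S}_\gamma(u_\eps)$ as $\eps \to 0$ and show that it lies strictly below $1 - Y_M/Y_\circ$. The value $c^{\rm bub}_\gamma = 1 - Y_M/Y_\circ$ is given by Corollary~\ref{cor:bubble-formula}.

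First recall why the pure bubble sits exactly at the threshold. Normalize $\|\varphi_\eps\|_{L^p}=1$. Schoen's expansion gives
\[
\mathcal{E}[\varphi_\eps] = Y_\circ - \beta_n\,\mathfrak{m}_{x_0}\,\eps^{n-2} + o(\eps^{n-2}).
\]
In the cases $n\in\{3,4,5\}$ or locally conformally flat, the positive mass theorem gives $\mathfrak{m}_{x_0}>0$ when $M$ is not conformally round. Since $\varphi_\eps\rightharpoonup 0$, we also have
\[
\dist(\varphi_\eps,\mathscr{O})^2 = \inf_{h}\bigl(\mathcal{E}[\varphi_\eps] - 2\langle \varphi_\eps,h\rangle + \mathcal{E}[h]\bigr).
\]
The cross term is $\langle \varphi_\eps, h\rangle = \int_M \varphi_\eps L_g h \ud V_g = \mu_h\int_M h^{p-1}\varphi_\eps \ud V_g$. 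Because $\varphi_\eps$ has the Green-function tail, this is
\[
\mu_h\,\eps^{(n-2)/2}\,\alpha_n(n-2)\omega_{n-1}\int_M h^{p-1}G_{x_0}\ud V_g\,\bigl(1+o(1)\bigr),
\]
using $L_g G_{x_0}=\delta_{x_0}$ up to the gluing error. Minimizing over the multiple $c h$, with $c$ free, gives
\[
\dist^2 \approx \mathcal{E}[\varphi_\eps] - \frac{\langle\varphi_\eps,h\rangle^2}{\mathcal{E}[h]}.
\]
Thus the distance decreases by an amount of order $\eps^{n-2}$, and its coefficient is exactly a multiple of $\mathsf{m}_\infty(x_0)$ (Proposition~\ref{prop:moment-sharp}, with $\mathcal{E}[h]=Y_M$ on $\mathscr{O}_1$). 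The supremum is attained by compactness of $\mathscr{O}_1$ (Lemma~\ref{lemma compactness}).

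Now insert these expansions into $\mathcal{S}_\gamma$, writing $\mathcal{E}[\varphi_\eps] = Y_\circ - A\eps^{n-2}$ and $\dist^2 = Y_\circ - (A+B)\eps^{n-2}$. Here $A = \beta_n\mathfrak m_{x_0}$ and $B$ is the moment term. Then
\[
\mathcal{S}_\gamma = \frac{\mathcal{E}^{(\gamma-2)/2}(\mathcal{E}-Y_M)}{\dist^{\gamma}}.
\]
Expanding to first order in $\eps^{n-2}$,
\[
\mathcal{S}_\gamma = \Bigl(1-\frac{Y_M}{Y_\circ}\Bigr)\Bigl[1 - \frac{\gamma-2}{2}\frac{A}{Y_\circ} - \frac{A}{Y_\circ - Y_M} + \frac{\gamma}{2}\frac{A+B}{Y_\circ}\Bigr].
\]
The bracket equals $1 + \eps^{n-2}\bigl[\tfrac{A}{Y_\circ}-\tfrac{A}{Y_\circ-Y_M} + \tfrac{\gamma}{2}\tfrac{B}{Y_\circ}\bigr]$ up to higher-order terms. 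This is $<1$ if and only if
\[
\frac{A\,Y_M}{Y_\circ(Y_\circ-Y_M)} > \frac{\gamma}{2}\frac{B}{Y_\circ},
\]
that is, $A\,Y_M > \tfrac{\gamma}{2}(Y_\circ-Y_M)B$. After absorbing the normalizations into $\beta_n$ and $\mathsf m_\infty$, this should be exactly \eqref{mass dominance}. I expect the constants to be arranged so that this matches; if they do not, one could also replace $\varphi_\eps$ by $\varphi_\eps - \text{proj}$ or optimize over $t$.

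It remains to explain why $n\in\{3,4,5\}$ holds without assuming mass dominance. My proposed route is to refine the test function: take
\[
u = \varphi_\eps - s\,h_*, \qquad s = \frac{\langle\varphi_\eps,h_*\rangle}{Y_M}.
\]
Subtracting this projection removes the $B$ term from the distance at the cost of an $\mathcal{O}(\eps^{n-2})$ change in $\mathcal{E}$ and in $\|u\|_p$. The delicate point is the $L^p$ interaction term $\int_M \varphi_\eps^{p-1}h \sim \eps^{(n-2)/2}$. In low dimensions it is larger than in the nonflat expansion, and its sign may favor us. This step is where I am least certain. If it fails, the fallback is to establish the statement under \eqref{mass dominance} in all cases covered.

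The main obstacle is the precise bookkeeping in the distance expansion. Two things must be checked: that the infimum over $\mathscr O$ in $\dist$ is asymptotically achieved near a fixed compact family, which uses Lemma~\ref{lemma compactness}; and that the error terms from gluing and from $\mathcal{E}[\varphi_\eps]^{(\gamma-2)/2}$ are $o(\eps^{n-2})$.
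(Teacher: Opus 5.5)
Your main argument is correct and is essentially the paper's proof. You test with the $L^p$-normalized Schoen function, use $\dist(\varphi_\eps,\mathscr O)^2=\mathcal E[\varphi_\eps]-Y_M\,\mathsf m(\varphi_\eps)$ (your projection computation rederives Lemma~\ref{lemma DEFFL distance}), and expand to first order in $\eps^{n-2}$; the result coincides with \eqref{eq:S-expansion-schoen}.

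No hedging about constants is needed. Your $B$ is exactly $Y_M\,\mathsf m_\infty(x_0)$, so your criterion $A\,Y_M>\tfrac{\gamma}{2}(Y_\circ-Y_M)B$ with $A=\beta_n\mathfrak m_{x_0}$ is literally \eqref{mass dominance}. Since $\mathcal S_\gamma$ increases with $\mathsf m$ when $\mathcal E$ and the $L^p$ norm are fixed, the one-sided bound $\mathsf m(\varphi_\eps)\leqslant(\mathsf m_\infty(x_0)+\mathrm o(1))\eps^{n-2}$ is all the paper uses. The competitor $h+t\varphi_\eps$ announced in your first sentence plays no role and should be removed.

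You should drop the attempt to remove mass dominance for $n\in\{3,4,5\}$. The paper does not prove that case either. Its proof begins by fixing $x_0$ at which \eqref{mass dominance} holds. Corollary~\ref{corollary existence stab opt gen M}(ii) also states explicitly that mass dominance is assumed in the low-dimensional case as well as the LCF case. So your fallback is precisely what is established.

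Moreover, your refinement $\varphi_\eps-s\,h_*$ is not an admissible competitor, because $c_\gamma$ is an infimum over $u\geqslant 0$. Set $c=\int_M h_*^{p-1}G_{x_0}\ud V_g$. Away from $x_0$, the function $\varphi_\eps-s\,h_*$ is a positive multiple of $G_{x_0}-(c+\mathrm o(1))h_*$. Since $\int_M h_*^{p-1}(G_{x_0}-c\,h_*)\ud V_g=0$, $h_*>0$, and $G_{x_0}\neq c\,h_*$, this function is negative on an open set for small $\eps$.

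Passing to $|u|$ does not repair this. It leaves $\mathcal E$ and the $L^p$ norm unchanged while $\mathsf m(|u|)\geqslant\mathsf m(u)$, so it can only increase $\mathcal S_\gamma$. For nonnegative competitors one always has $\mathsf m(u)\geqslant\bigl(\int_M h_*^{p-1}u\ud V_g\bigr)^2>0$, which is exactly why the stability-mass penalty, and hence the mass dominance hypothesis, appears.
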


By the positive mass theorem of Schoen and Yau \cite{SY1, SY_PMT2,witten}, one 
has $\mathfrak{m}_{x_0} > 0$ 
for every $x_0 \in M$ whenever $(M,g)$ is not conformally equivalent to $(\Ss^n, g_\circ)$. 
Please see the recent manuscripts \cite{pmt_dim19, brendle_pmt} for proofs 
of the positive mass theorem in any dimension $n \geqslant 3$. 
We refer the reader to \cite{SY1, SY_PMT2, witten, pmt_dim19, brendle_pmt} for the history of the positive mass theorem 
and its various formulations. We emphasize that in our setting the positivity of $\mathfrak{m}_{x_0}$ is used only to verify 
the quantitative mass dominance condition \eqref{mass dominance}, and is not needed for the abstract existence result 
(Theorem~\ref{theorem existence stab opt gen M}).

Condition \eqref{mass dominance} thus reduces to a quantitative comparison between the 
Green function mass and the stability mass coefficient
$\mathsf{m}_\infty(x_0)$. We stress that
this is a new feature of the stability problem: in the Yamabe problem, the
sign $\mathfrak{m}_{x_0} > 0$ alone suffices to conclude $Y(M,[g]) < Y(\Ss^n,[g_\circ])$,
whereas for the stability quotient $\mathcal{S}_\gamma$ the positive stability mass $\mathsf{m}_\infty(x_0)$ creates
an additional constraint.

In the non-locally-conformally-flat case with $n \geqslant 6$, Theorem~\ref{theorem c < c-bubble}
shows that the bubble threshold condition of Theorem~\ref{theorem existence stab opt gen M} is
automatically satisfied, whence it can be dropped.

\begin{corollary}
\label{corollary existence stab opt gen M}
Let $(M,g)$ be a closed $n$-dimensional Riemannian manifold with $n\geqslant 3$ such 
that $0 < Y(M,[g]) < Y(\Ss^n,[g_\circ])$. Suppose that either
\begin{itemize}
\item[(i)] $n \geqslant 6$ and $(M,g)$ is not locally conformally flat, or
\item[(ii)] $n \in \{3,4,5\}$ or $(M,g)$ is locally conformally flat, and the mass dominance condition \eqref{mass dominance} holds.
\end{itemize}
If the strict inequality below holds
\begin{equation}
\label{strict ineq cor}
c_{\gamma}(M,g) < c_{\gamma}^{\mathrm{loc}}(M,g),
\end{equation}
then every minimizing sequence $(u_k)_{k \in \mathbb{N}}$ for $c_{\gamma}(M,g)$ 
converges strongly in $H^1(M)$ up to a subsequence. In particular,
there exists an optimizer $u \in H^1(M) \setminus \mathscr{O}$ for $c_{\gamma}(M,g)$.
\end{corollary}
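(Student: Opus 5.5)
The corollary follows by combining the abstract existence result with the two threshold estimates; no new analysis is needed. We will verify both hypotheses of Theorem~\ref{theorem existence stab opt gen M}, namely \eqref{strict ineq thm}. The first inequality $c_\gamma(M,g) < c_\gamma^{\mathrm{loc}}(M,g)$ is exactly the assumption \eqref{strict ineq cor}, so it remains only to establish the bubbling inequality $c_\gamma(M,g) < c_\gamma^{\rm bub}(M,g)$. Once both hold, Theorem~\ref{theorem existence stab opt gen M} gives, up to a subsequence, strong convergence in $H^1(M)$ of every minimizing sequence, and hence an optimizer in $H^1(M)\setminus\mathscr{O}$.

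For the bubbling inequality we split according to the two alternatives in the statement.

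\emph{Case (i).} Here $n\geqslant 6$ and $(M,g)$ is not locally conformally flat. Together with the standing assumption $0<Y(M,[g])<Y(\Ss^n,[g_\circ])$, these are exactly the hypotheses of Theorem~\ref{theorem c < c-bubble}, which yields $c_\gamma(M,g)<c_\gamma^{\rm bub}(M,g)$ directly.

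\emph{Case (ii).} Here either $n\in\{3,4,5\}$ or $(M,g)$ is locally conformally flat, and the mass dominance condition \eqref{mass dominance} holds at some $x_0\in M$. This is the setting of Theorem~\ref{theorem Schoen}, which again gives the strict inequality.

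The only point requiring care is the matching of hypotheses in case (ii). As worded, Theorem~\ref{theorem Schoen} may be read as asking for mass dominance only in the locally conformally flat case. Corollary~\ref{corollary existence stab opt gen M}(ii), however, assumes \eqref{mass dominance} in both subcases, so it is covered under either reading. No other obstacle arises.
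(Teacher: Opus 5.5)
Your proof is correct and follows the paper's argument: hypothesis \eqref{strict ineq cor} gives the local threshold, and Theorem~\ref{theorem c < c-bubble} (resp.\ Theorem~\ref{theorem Schoen}) gives $c_\gamma(M,g) < c_\gamma^{\rm bub}(M,g)$, after which Theorem~\ref{theorem existence stab opt gen M} yields the conclusion. The paper's written proof only treats case (i), so your explicit handling of case (ii) via Theorem~\ref{theorem Schoen}, including the observation that mass dominance is assumed in both subcases, fills in what the paper leaves implicit.
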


All three results, Theorems~\ref{theorem existence stab opt gen M},~\ref{theorem c < c-bubble}, 
and~\ref{theorem Schoen}, hold for each Riemannian manifold with a positive Yamabe 
invariant, regardless of $\gamma \geqslant 2$. The test function expansions 
in \S\ref{sec:proof-thm2}--\S\ref{sec:proof-schoen} are independent of $\gamma$, while the 
compactness argument in Theorem~\ref{theorem existence stab opt gen M} is treated in full 
generality in Claim~2 of the proof in \S\ref{sec:proof-thm1}. For Theorem~\ref{theorem Schoen}, 
the mass dominance condition \eqref{mass dominance} depends on $\gamma$ through the factor 
$\gamma/2$, which arises from the expansion of $\dist(u_\mu, \mathscr{O})^\gamma$ in the 
denominator of $\mathcal{S}_\gamma$ (cf. \eqref{eq:S-expansion-schoen}). Since the 
left-hand side of \eqref{mass dominance} is independent of $\gamma$ while the right-hand 
side grows linearly in $\gamma$, the condition becomes more restrictive as $\gamma$ increases. 
For $\gamma = 2$, the factor $\gamma/2$ equals $1$ and \eqref{mass dominance} reduces to
\[
\beta_n\, \mathfrak{m}_{x_0}- (Y(\Ss^n,[g_\circ]) - Y(M,[g]))\,\mathsf{m}_\infty(x_0)> 0.
\]

\medskip
\noindent{\bf Strategy of the proof.}
The proof of Theorem~\ref{theorem existence stab opt gen M} follows a concentration-compactness-type 
argument adapted from \cite{arXiv:2211.14185}. 
Because of the setting 
on a compact manifold and the compactness of minimizers $\mathscr O_1$, the concentration 
alternative for a minimizing sequence can be treated in a very elementary and efficient  
fashion solely in terms of weak convergence $u_k \rightharpoonup 0$ (compare the definition 
of $c_{\gamma}^{\rm bub}(M,g)$).
Similarly to \cite{arXiv:2211.14185}, it is then important to exclude two unfavorable 
alternatives:  either the sequence converges to an optimizer $h \in \mathscr{O}$, in which case 
\[
\liminf_{k \to \infty} \mathcal{S}_\gamma(u_k) \geqslant c_{\gamma}^{\mathrm{loc}}(M,g),
\]
or $u_k \rightharpoonup 0$ ({\it i.e.}\ all of $u$ concentrates), in which case
\[
\liminf_{k \to \infty} \mathcal{S}_\gamma(u_k) \geqslant c_{\gamma}^{\rm bub}(M,g).
\]
The two strict inequalities in \eqref{strict ineq thm} exclude both scenarios, forcing the
sequence to converge strongly to $u \in H^1(M) \setminus \mathscr O$. The reduction to the model 
scenarios of $c_\gamma^{\rm loc}$ and $c_\gamma^{\rm bub}$ uses
a one-variable auxiliary function that encodes the asymptotic behavior
of $\mathcal{S}_\gamma$ along concentrating sequences; the compactness argument then relies on a
calculus lemma showing that this function has no local minima (Lemma~\ref{lemma:F-no-local-min}). This 
approach is similar to \cite{arXiv:2211.14185} but more complicated because it needs to take into account the 
more complicated structure of $\mathcal S_\gamma$ when $\gamma > 2$. 

The verification of the bubble threshold condition, carried out
in \S\ref{sec:proof-thm2}--\S\ref{sec:proof-schoen}, introduces a difficulty absent in the
classical Yamabe problem. The \emph{stability mass} $\mathsf{m}$ defined in \eqref{eq:momentum}
appears in the Taylor expansion of the stability quotient as a second-order mass term that competes against the energy deficit in
the numerator. In the Yamabe problem, the sign of the energy correction alone determines whether the
Aubin inequality holds strictly. For the stability quotient, however, the energy correction
and the stability mass have the same sign but opposite
effects: a negative energy correction
drives the stability quotient of the Aubin test functions below the bubble threshold, while the positive stability mass pushes it
back up. For Aubin's test functions on non-LCF manifolds with $n \geqslant 6$, the Weyl curvature
correction at order $\mu^4$ dominates the stability mass at order $\mu^{n-2}$,
and the bubble threshold is satisfied unconditionally. For Schoen's test functions in the LCF case,
both terms are at order $\mu^{n-2}$, and the comparison gives rise to the mass dominance
condition \eqref{mass dominance}.

\medskip
\noindent{\bf Examples.}
In the forthcoming companion paper \cite{AKRW-cylinder}, we verify the hypotheses of 
Theorem \ref{theorem existence stab opt gen M} for the explicit family of product 
cylinders 
\[
(M_\tau,g_\tau):= (\mathbb S^1(\tau) \times \mathbb S^{n-1},g_{\mathbb S^1(\tau)}\otimes g_{\mathbb S^{n-1}}).
\] 
On these manifolds, we compute the local
stability constant $c_{\gamma}^{\mathrm{loc}}(M_\tau, g_\tau)$ explicitly via the
spectral decomposition on the cylinder.
Indeed, for 
\(\tau \leqslant \tau^*:= \tfrac{1}{\sqrt{n-2}}\)
the minimizers are uniquely the constant 
functions \cite{Schoen1989}, which allows for computing all the relevant quantities to 
Theorems \ref{theorem existence stab opt gen M} and \ref{theorem Schoen} (that is, 
$c_\gamma$, $c_\gamma^{\rm loc}$, $c_\gamma^{\rm bub}$, $\mathfrak m(x_0)$ 
and $\mathsf m_\infty(x_0)$)  explicitly and determine sufficient conditions for the 
existence of stability optimizers. 

The family $M_\tau$ is of particular interest for 
two reasons. The first is that when $\tau = \tau^*$ the manifold $(M_\tau, g_\tau)$ 
exhibits degenerate stability with $\gamma = 4$ (see \cite{ENS, F}). The second reason
is that cylinders provide possible blow-up models for scalar curvature. It is well known
that there exist blow-up sequences of solutions to the Yamabe equation \cite{Khuri_Marques_Schoen, DHR}, and that the
blow-up occurs in isolated points. More precisely, there exists a Riemannian manifold
$(M,g)$ of dimension $n$ and a sequence
of conformal factors $(u_k)_{k\in\mathbb{N}}\subset H^1(M)$ such that the conformal metric $g_k = u_k^{{4}/{(n-2)}}
g$ has constant scalar curvature equal to $n(n-1)$ and
\(\limsup_{k \in \mathbb{N}} \| u_k \|_{L^\infty(M)} =\infty.\)
If we let $p_k=\max_{x\in M}u_k(x)$ for all $k\in\mathbb{N}$ denote a maximum point
and let $p_\infty=\limsup_{k\to\infty}p_k$ be an accumulation
point of the sequence $(p_k)_{k\in\mathbb{N}}$.
For each $k\in\mathbb{N}$, we can rescale the solution $u_k\in
\mathcal{C}^2(B_{R_k}^*(p_k))$ about $p_k$, where $B_{R_k}^*(p_k):=B_{R_k}(p_k)
\setminus\{p_k\}$ is the punctured ball centered at $p_k$ with radius
$R_k=\mathrm{o}_k(1)$. In the limit, we obtain $u_k\to u_\infty$ as $k\to\infty$,
where $u_\infty \in \mathcal{C}^2(\R^n \setminus \{ 0 \})$ is an entire
solution on the punctured space, which is conformally equivalent to the cylinder
$\R \times \Ss^{n-1}$. Under certain circumstances, solutions on the cylinder
$\R \times \Ss^{n-1}$ are periodic and thus descend to the product $\Ss^1(\tau)
\times \Ss^{n-1}$ for some $\tau>0$. Heuristically, this explanation is the
same reason ancient solutions of Ricci flow are important in classifying possible
blow-up behavior of Ricci flow in general.

\medskip
\noindent{\bf Organization of the paper.}
In \S\ref{sec:prelim} we collect the distance formula and basic properties of the functional $\mathsf{m}$, 
and we compute the bubble threshold $c_{\gamma}^{\rm bub}(M,g)$. In \S\ref{sec:proof-thm1} we prove 
the abstract existence theorem. In \S\ref{sec:proof-thm2} we verify the bubble threshold condition via 
Aubin's test functions for $n \geqslant 6$, non-LCF manifolds. In \S\ref{sec:proof-schoen} we carry out 
the Schoen test function construction and prove the mass dominance criterion. A list of notation is collected at the end of the paper.

\section{Preliminaries}\label{sec:prelim}

\subsection{Compactness of the set of $L^p$-normalized optimizers} 
The following lemma guarantees the compactness of the set $\mathscr O_1$ (defined in \eqref{O1 compact}) 
of $L^p$-normalized Yamabe optimizers whenever $Y(M,[g]) < Y(\Ss^n,[g_\circ])$. Since it is an important 
tool throughout our arguments, we provide a proof following \cite[Section 4]{LP}. 

\begin{lemma}
    \label{lemma compactness}
    Let $(M,g)$ be a smooth closed $n$-dimensional Riemannian manifold with $n \geqslant 3$ 
    such that $Y(M,[g]) < Y(\Ss^n,[g_\circ])$. Then $\mathscr O \subset \mathcal C^\infty(M)$. 
    Moreover, the set $\mathscr O_1$ is compact in $\mathcal C^2(M)$. 
  \end{lemma}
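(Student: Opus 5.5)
We argue in two stages: first regularity of optimizers, then compactness via blow-up analysis excluded by the strict inequality $Y_M<Y_\circ$.

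\emph{Regularity.} Let $u\in\mathscr O$. Here $\mathscr O$ is nominally defined inside $\mathcal C^\infty$, but the content of the claim is that any nonnegative $H^1$ minimizer is smooth. By scaling we may assume $\|u\|_{L^p}=1$. The Euler--Lagrange equation is
\[
L_g u=Y_M u^{p-1}, \qquad u\geqslant 0 .
\]
Set $V=u^{p-2}\in L^{n/2}(M)$. Trudinger's argument, exactly as in \cite[Section 4]{LP}, gives $u\in L^q$ for all $q<\infty$: test the equation against $u\min(u,L)^{2\delta}$ and use the Sobolev inequality. The critical term is absorbed because $\|V\|_{L^{n/2}(\{V>K\})}$ is small for $K$ large; this step uses only $V\in L^{n/2}$, not the strict inequality. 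Elliptic $L^q$ estimates then give $u\in W^{2,q}\subset \mathcal C^{1,\alpha}$. Schauder estimates and a bootstrap give $u\in\mathcal C^\infty$, and the strong maximum principle gives $u>0$.

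\emph{Compactness.} Take $(h_k)\subset\mathscr O_1$. Each $h_k$ solves $L_g h_k=Y_M h_k^{p-1}$ with $\mathcal E[h_k]=Y_M$, so the sequence is bounded in $H^1$. It suffices to prove a uniform bound $\sup_k\|h_k\|_{L^\infty}<\infty$. Given that bound, $W^{2,q}$ and then Schauder estimates bound $h_k$ in $\mathcal C^{2,\alpha}$, and Arzel\`a--Ascoli yields a $\mathcal C^2$-convergent subsequence. The limit $h$ satisfies $\|h\|_{L^p}=1$, $h\geqslant0$ and $\mathcal E[h]=Y_M$, so $h\in\mathscr O_1$ and the set is closed.

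To get the $L^\infty$ bound, suppose $m_k:=\max h_k=h_k(x_k)\to\infty$. Rescale in normal coordinates at $x_k$ with $\lambda_k=m_k^{-2/(n-2)}\to0$ by setting
\[
v_k(y)=m_k^{-1}h_k(\exp_{x_k}(\lambda_k y)).
\]
Then $0\leqslant v_k\leqslant 1$, $v_k(0)=1$, and $v_k$ solves an equation whose coefficients converge to those of $-\Delta v=Y_M v^{p-1}$ on $\R^n$. Local elliptic estimates give a $\mathcal C^2_{\loc}$ subsequential limit $v\not\equiv0$ with $v\geqslant0$ solving this equation on $\R^n$. Since the $L^p$ norm and the Dirichlet energy are scale invariant and $\lambda_k^2R_g\to0$, Fatou's lemma gives $\int_{\R^n}|\nabla v|^2\leqslant Y_M$ and $\|v\|_{L^p(\R^n)}\leqslant 1$. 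Testing the limit equation with $v$ (after a cutoff argument) gives $\int|\nabla v|^2=Y_M\|v\|_p^p$. Combining this with the Euclidean Sobolev inequality \eqref{eucl_sobolev},
\[
Y_\circ\|v\|_p^2\leqslant \int|\nabla v|^2 = Y_M\|v\|_p^p ,
\]
so $\|v\|_p^{p-2}\geqslant Y_\circ/Y_M>1$. This contradicts $\|v\|_p\leqslant1$, so the maxima stay bounded.

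The blow-up step is the main obstacle. The delicate points are the finite-energy bound for $v$ and justifying the identity $\int|\nabla v|^2=Y_M\|v\|_p^p$ on all of $\R^n$. I would handle these by testing with $\eta_R^2 v$ for cutoffs $\eta_R$ and letting $R\to\infty$, using $v\in L^p$ and $\nabla v\in L^2$.
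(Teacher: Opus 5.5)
Your proof is correct, and for regularity it agrees with the paper's (both go through the Trudinger-type argument of \cite[Theorem~4.1]{LP}). For compactness you take a genuinely different route.

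\textbf{The paper's route.} The paper never argues by contradiction. It tests $L_g u = Y_M u^{p-1}$ with $u^{1+2\delta}$ and sets $w=u^{1+\delta}$. Using H\"older and $\|u\|_{L^p}=1$, it obtains
\[
\tfrac{1+2\delta}{(1+\delta)^2}\|\nabla w\|_2^2\le Y_M\|w\|_p^2+C\|w\|_2^2 .
\]
It then combines this with Aubin's $\eps$-sharp Sobolev inequality on $M$. The gap $Y_M<Y_\circ$ allows a choice of $\delta,\eps$ with $(1+\eps)\frac{(1+\delta)^2}{1+2\delta}\frac{Y_M}{Y_\circ}<1$, so the critical term can be absorbed. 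This gives a uniform $L^{p(1+\delta)}$ bound on $\mathscr O_1$, which \cite[Theorem~4.1]{LP} upgrades to uniform $\mathcal C^{2,\alpha}$ bounds.

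In effect, this is the uniform version of your own Trudinger step. There you absorb the critical term using smallness of $\|u^{p-2}\|_{L^{n/2}}$ on superlevel sets, which is not uniform over $\mathscr O_1$. The paper replaces that smallness by the uniform gap $Y_M<Y_\circ$.

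\textbf{What each route buys.} The paper's argument is short and purely integral. It needs no rescaled domains, no limit problem on $\R^n$ and no cutoff justification, and it yields an explicit improved exponent. Your blow-up argument needs only the sharp Euclidean Sobolev inequality and local elliptic estimates, not Aubin's inequality on $M$. It also makes the mechanism transparent: a bubble would have to carry $L^p$-mass at least $(Y_\circ/Y_M)^{(n-2)/4}>1$. This is the same one-bubble heuristic that underlies the paper's threshold $c_\gamma^{\rm bub}$.

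\textbf{Two minor slips.}
\begin{itemize}
\item The bound $\int_{\R^n}|\nabla v|^2\le Y_M$ is not justified as written. If $R_g$ changes sign, then $\int_M|\nabla h_k|^2=Y_M-c_n\int_M R_g h_k^2$ can exceed $Y_M$. You only need finiteness of the energy, though, and that follows from the uniform $H^1$ bound.
\item Dividing by $Y_M$ at the end requires $Y_M>0$. This is the paper's standing assumption, but the lemma as stated does not impose it. For $Y_M\le 0$ the identity $\int|\nabla v|^2=Y_M\|v\|_p^p\le 0$ forces $v\equiv 1\notin L^p(\R^n)$, so you still get a contradiction.
\end{itemize}
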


\begin{proof}
Each $u \in \mathscr O_1$ satisfies, in the weak sense, the Euler-Lagrange 
equation $L_g u = Y_M u^{p-1}$, where the value $Y_M$ is determined from testing 
the equation with $u$ and using $\|u\|_{L^p(M)} = 1$ and $\mathcal E(u) = Y_M$. Thus 
the regularity theorem stated in \cite[Theorem 4.1]{LP} implies $u \in \mathcal C^\infty(M)$.

For the compactness statement, we claim there exist $q > p$ and $C > 0$ such 
that $\|u\|_{L^q(M)} \leqslant C$ for every $u \in \mathscr O_1$. Accepting this 
claim for the moment, it follows again from \cite[Theorem 4.1]{LP} that  there 
exist $\alpha, C > 0$ such that $\|u\|_{\mathcal C^{2, \alpha}(M)} \leqslant C$ for 
every $u \in \mathscr O_1$. The compactness statement then follows from the Arzel\`a--Ascoli theorem.

To prove the claim, we slightly adapt the argument in \cite[proof of Proposition 4.4]{LP}. 
For some $\delta > 0$ to be determined, we test the equation $L_g u = Y_M u^{p-1}$ with 
$u^{1 + 2 \delta}$, which is an admissible test function because $u \in \mathcal C^\infty(M)$. 
This yields
\begin{equation}\label{eq:test_integral}
\int_M \left( (1 + 2 \delta)  u^{2\delta} |\nabla u|^2  + 
c_n R_g u^{2 + 2\delta} \right) \ud V_g  = Y_M \int_M u^{p + 2 \delta} \ud V_g.
\end{equation}
Setting $w := u^{1 + \delta}$, we have $\nabla w = (1 + \delta) u^\delta \nabla u$, 
and so \eqref{eq:test_integral} becomes 
\begin{align} \label{compactness1}
\frac{1 + 2 \delta}{( 1+ \delta)^2} \|\nabla w\|_{L^2(M)}^2 &=
c_n \int_M R_g u^{2 + 2\delta} \ud V_g  \\ \nonumber
&=  Y_M \int_M w^2 u^{p-2} \ud V_g - c_n \int_M R_g w^2 \ud V_g \\ \nonumber
&\leqslant  Y_M \|w\|_{L^p(M)}^2 + C \|w\|_{L^2(M)}^2,
\end{align} 
by H\"older's inequality and $\|u\|_{L^p(M)} = 1$.

By Aubin's Sobolev inequality \cite[Theorem 2.3]{LP}, for every $\eps > 0$ there is $C_\eps > 0$ such that 
\begin{equation}
    \label{aubin}
     \|w\|_{L^p(M)}^2 \leqslant \frac{1 + \eps}{Y_\circ} \|\nabla w\|_{L^2(M)}^2 + C_\eps \|w\|_{L^2(M)}^2 .
\end{equation}
Combining \eqref{aubin} and \eqref{compactness1} we get 
\begin{equation} \label{compactness2} 
\|w\|_{L^p(M)}^2 \leqslant  (1 + \eps) \frac{(1+\delta)^2}{1 + 2 \delta} 
\frac{Y_M}{Y_\circ} \|w\|_{L^p(M)}^2 + C_{\delta, \eps} \|w\|_{L^2(M)}^2.   
\end{equation}
Since $Y_M < Y_\circ$, we can pick $\delta, \eps > 0$ sufficiently small 
such that 
\[(1 + \eps) \frac{(1+\delta)^2}{1 + 2 \delta} \frac{Y_M}{Y_\circ}< 1,\] 
which in turn allows us to rearrange \eqref{compactness2} as 
\[ \|w\|_{L^p(M)}^2 \leqslant C \|w\|_{L^2(M)}^2 \]
for some $C > 0$ depending on $\delta, \eps$, but independent of $u \in \mathscr O_1$. 
Rewriting this last equation in terms of $u$, we find 
\[ \|u\|_{L^{p(1 + \delta)}(M)}^{2(1 + \delta)} 
\leqslant C \|u\|_{L^{2(1 + \delta)}(M)}^2 \leqslant C 
\|u\|_{L^p(M)}^2 = C \quad {\rm for \ all} \quad u \in \mathscr O_1. \]
The application of H\"older's inequality in the last estimate is justified 
whenever $2(1+ \delta) \leqslant p$, which we can always achieve by decreasing 
$\delta > 0$ if necessary. This proves the claim, with $q = p(1 +\delta) > p$.
\end{proof}

\begin{remark}
    The parameter $\eps$ in the proof above is unnecessary if we replace Aubin's inequality \eqref{aubin}
    with its strengthened endpoint version with $\eps = 0$ and $C_0 < \infty$, which is due to Hebey and 
    Vaugon \cite{HeVa}. Since this is not essential for our proof, we choose to work directly with 
    the weaker and more classical inequality \eqref{aubin}. The same remark applies to the 
    proof of Proposition \ref{proposition Sobolev for bubbles}, where we use inequality \eqref{aubin} again. 
\end{remark}

By the maximum principle, when $Y(M,[g]) \leqslant 0$, there is (up to scaling) only one 
constant scalar curvature metric in each conformal class. This provides a much simpler 
proof of Lemma~\ref{lemma compactness} in the case of nonpositive 
scalar curvature. 

\subsection{Distance estimates} 
Recall that $Y(M, [g])> 0$ allows to define 
\[\dist(u, \mathscr{O})^2 = \inf_{h \in \mathscr O} \mathcal E[u - h] 
= \inf_{h \in \mathcal O_1, \lambda \in (0, \infty)} \mathcal E[u - \lambda h]. \] 

The proof of the following lemma is nearly identical to that of \cite[Lemma~3.3]{DEFFL}. The 
only slight difference is that in our geometric case the minimization is only over $\lambda 
\in (0,\infty)$ instead of $\lambda \in \R$, which however turns out to pose no problems because 
the minimum $\lambda^*$ is automatically positive.

\begin{lemma}
\label{lemma DEFFL distance}
Let $(M,g)$ be a closed $n$-dimensional Riemannian manifold with $n \geqslant 3$ such that $0 < Y(M,[g]) < Y(\Ss^n,[g_\circ])$.
For every $u \in H^1(M)$, one has
\[
\dist(u, \mathscr{O})^2 = \mathcal E[u] - Y(M,[g]) \mathsf{m}(u),
\]
where the \emph{stability mass}
\begin{equation}\label{eq:momentum}
    \mathsf{m}(u) := \sup_{h \in \mathscr{O}_1} \left( \int_M h^{p-1} u \ud V_g \right)^2
\end{equation}
measures the squared $L^p$-projection of $u$ onto the optimizer manifold $\mathscr{O}_1$.
Moreover, the supremum in \eqref{eq:momentum} is attained and 
\[
\mathsf{m}(u) \leqslant \|u\|_{L^p(M)}^2 \quad {\rm for \ all} \quad u \in H^1(M).
\]
\end{lemma}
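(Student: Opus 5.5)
The plan is to compute $\inf_{\lambda>0}\mathcal E[u-\lambda h]$ explicitly for each fixed $h\in\mathscr O_1$ and then optimize over $h$.

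Every $h\in\mathscr O_1$ satisfies the Euler--Lagrange equation $L_g h = Y_M h^{p-1}$ (as in the proof of Lemma~\ref{lemma compactness}), together with $\mathcal E[h]=Y_M$ and $\|h\|_{L^p(M)}=1$. Expanding the quadratic form and using this equation weakly gives, for $\lambda\in\R$,
\[
\mathcal E[u-\lambda h] = \mathcal E[u] - 2\lambda Y_M \int_M h^{p-1}u \ud V_g + \lambda^2 Y_M .
\]
Minimizing over $\lambda\in\R$, the minimizer is $\lambda^* = \int_M h^{p-1}u\ud V_g$, and the minimum value is $\mathcal E[u]-Y_M\big(\int_M h^{p-1}u\ud V_g\big)^2$.

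The restriction $\lambda\in(0,\infty)$ is the step that needs care. Since the admissible class in \eqref{ENS_ineq} and \eqref{eq:ENS-def} is $u\geqslant 0$, and $h>0$ by the maximum principle, we get $\lambda^*\geqslant 0$. If $\lambda^*>0$, the constrained infimum equals the unconstrained one. If $\lambda^*=0$, the infimum over $(0,\infty)$ is the limit as $\lambda\to0^+$, which still equals $\mathcal E[u]$, the same value. For signed $u$ and a given $h$ we could instead get $\lambda^*<0$. Two ways to handle this: either read the lemma for $u\geqslant0$, as the authors indicate by saying $\lambda^*$ is automatically positive, or interpret $\mathscr O$ as closed under sign, so that $-h$ is allowed. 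In the latter case the square $(\int h^{p-1}u)^2$ is insensitive to the sign. I would state the argument for $u\geqslant 0$ and remark on the sign issue. Taking the infimum over $h\in\mathscr O_1$ then gives $\dist(u,\mathscr O)^2=\mathcal E[u]-Y_M\,\mathsf m(u)$.

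The supremum in \eqref{eq:momentum} is attained because $\mathscr O_1$ is compact in $\mathcal C^2(M)$ by Lemma~\ref{lemma compactness}. Consequently $h\mapsto h^{p-1}$ is continuous into $\mathcal C^0(M)$, and $h\mapsto(\int_M h^{p-1}u)^2$ is a continuous function on a compact set, since $u\in L^1(M)$. For the bound on $\mathsf m(u)$, apply H\"older's inequality with exponents $p'=p/(p-1)$ and $p$:
\[
\Big|\int_M h^{p-1}u\ud V_g\Big| \leqslant \|h^{p-1}\|_{L^{p'}(M)}\,\|u\|_{L^p(M)} = \|h\|_{L^p(M)}^{p-1}\,\|u\|_{L^p(M)} = \|u\|_{L^p(M)}.
\]
Squaring and taking the supremum gives $\mathsf m(u)\leqslant\|u\|_{L^p(M)}^2$.

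None of these steps is hard. The only point needing attention is justifying the positivity of $\lambda^*$, which is what allows the constrained minimization over $(0,\infty)$ to be replaced by the explicit unconstrained formula.
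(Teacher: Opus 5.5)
Your proof is correct and follows the paper's argument: expand $\mathcal E[u-\lambda h]$ using $L_g h = Y_M h^{p-1}$, minimize in $\lambda$ and then over $h\in\mathscr O_1$, get attainment of the supremum from the compactness of $\mathscr O_1$, and get the bound $\mathsf m(u)\leqslant\|u\|_{L^p(M)}^2$ from H\"older. Your caution about the sign is justified, since the paper's claim that $\lambda^*\in(0,\infty)$ holds automatically is only true for $u\geqslant 0$, $u\not\equiv 0$ (for $u=-h_0$ with $h_0\in\mathscr O_1$ one has $\dist(u,\mathscr O)^2=Y_M$, while the formula gives $0$), so your restriction to $u\geqslant 0$, together with your separate treatment of $\lambda^*=0$, is the right reading and is the setting in which the paper actually uses the lemma.
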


\begin{proof}
Initially, since $\mathscr{O}_1$ is compact and consists of positive solutions of $L_g h 
= Y_M h^{p-1}$ with $\|h\|_{L^p(M)} = 1$, the supremum defining $\mathsf{m}(u)$ is attained. Next, 
for $h \in \mathscr{O}_1$ and $\lambda \in (0,\infty)$, 
integration by parts and the equation $L_g h = Y_M h^{p-1}$ yield
\[
\mathcal{E}[u - \lambda h] = \mathcal{E}[u] - 2\lambda Y_M \int_M h^{p-1} u \ud V_g + \lambda^2 Y_M.
\]

This is minimal at $\lambda^* = \int_M h^{p-1} u \ud V_g \in (0, \infty)$, and hence
\[
\inf_{\lambda \in \R} \mathcal{E}[u - \lambda h] = \mathcal{E}[u] - Y_M \left(\int_M h^{p-1} u \ud V_g\right)^2.
\]
Finally, taking the infimum over $h \in \mathscr{O}_1$ on both sides, one obtains $\dist(u, \mathscr{O})^2 = \mathcal{E}[u] - Y_M\, \mathsf{m}(u)$. For the upper bound, since $\|h\|_{L^p(M)} = 1$ for every $h \in \mathscr{O}_1$, a classical H\"older's inequality yields
\[
\left|\int_M h^{p-1} u \ud V_g\right| \leqslant \|h^{p-1}\|_{L^{p/(p-1)}(M)}\, \|u\|_{L^p(M)} = \|h\|_{L^p(M)}^{p-1}\, \|u\|_{L^p(M)} = \|u\|_{L^p(M)}.
\]
Squaring and taking the infimum over $h \in \mathscr{O}_1$, one obtains $\mathsf{m}(u) \leqslant \|u\|_{L^p(M)}^2$.
\end{proof}

\begin{remark}[Terminology]\label{rmk:stability-mass}
We call $\mathsf{m}:H^1(M)\setminus \mathcal{O}\to \mathbb{R}$ the \emph{stability mass} functional because it plays, for the stability
quotient $\mathcal{S}_\gamma$, a role analogous to that of the Green function
mass $\mathfrak{m}_{x_0}$ in the Yamabe problem. Indeed, the Taylor expansion of
$\mathcal{S}_\gamma$ along concentrating test functions (cf.\ \eqref{eq:S-expansion-final}) reads
\[
\mathcal{S}_\gamma(u_\mu) = c_\gamma^{\rm bub}
  + \frac{Y_M}{Y_\circ^2}\, a_\mu
  + \frac{\gamma\, Y_M(Y_\circ - Y_M)}{2\, Y_\circ^2}\, m_\mu
  + \mathrm{o}(a_\mu) + \mathrm{o}(m_\mu),
\]
where $a_\mu = \mathcal{E}[u_\mu] - Y_\circ$ is the energy correction controlled by
$\mathfrak{m}_{x_0}$ and $m_\mu = \mathsf{m}(u_\mu)$ is a second-order correction
that is absent from the Yamabe quotient. For Schoen's test functions both
contributions are at order $\mu^{n-2}$, and the mass dominance
condition \eqref{mass dominance} is precisely the requirement that the Green
function mass dominates the stability mass.
\end{remark}

Thanks to the compactness of $\mathscr{O}_1$, the functional $\mathsf{m}$ vanishes along weakly convergent sequences.

\begin{lemma}
\label{lemma m to 0 weak conv}
Let $(M,g)$ be a closed $n$-dimensional Riemannian manifold with $n \geqslant 3$ 
such that $0 < Y(M,[g]) < Y(\Ss^n,[g_\circ])$.
Suppose that $(u_k)_{k \in \mathbb{N}} \subset H^1(M)$ satisfies $\|u_k\|_{L^p(M)} = 1$ and $u_k \rightharpoonup 0$ in $L^p(M)$. Then, it holds $\mathsf{m}(u_k) = \mathrm{o}_k(1)$.
\end{lemma}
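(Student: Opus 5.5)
The plan is to exploit the compactness of $\mathscr O_1$ from Lemma~\ref{lemma compactness}, which upgrades pointwise weak convergence into convergence that is uniform over the optimizer set. By Lemma~\ref{lemma DEFFL distance}, the supremum defining $\mathsf m(u_k)$ is attained, so for each $k$ we choose $h_k \in \mathscr O_1$ with
\[
\mathsf m(u_k) = \Bigl(\int_M h_k^{p-1} u_k \ud V_g\Bigr)^2 .
\]
It then suffices to show that $\int_M h_k^{p-1} u_k \ud V_g \to 0$. We argue by contradiction along a subsequence: suppose this fails, so that along some subsequence $|\int_M h_k^{p-1}u_k\ud V_g| \geqslant \eta > 0$.

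By Lemma~\ref{lemma compactness}, $\mathscr O_1$ is compact in $\mathcal C^2(M)$. After passing to a further subsequence, we may therefore assume $h_k \to h$ in $\mathcal C^0(M)$ for some $h \in \mathscr O_1$. Consequently $h_k^{p-1} \to h^{p-1}$ uniformly on $M$, because $t \mapsto t^{p-1}$ is uniformly continuous on bounded sets and the $h_k$ are uniformly bounded. We then split
\[
\int_M h_k^{p-1} u_k \ud V_g = \int_M h^{p-1} u_k \ud V_g + \int_M \bigl(h_k^{p-1} - h^{p-1}\bigr) u_k \ud V_g .
\]
The first term tends to zero by the weak convergence $u_k \rightharpoonup 0$ in $L^p(M)$, since $h^{p-1} \in L^\infty(M) \subset L^{p/(p-1)}(M) = (L^p)^*$. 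The second term is bounded by
\[
\|h_k^{p-1}-h^{p-1}\|_{L^{p/(p-1)}(M)}\,\|u_k\|_{L^p(M)} \leqslant C \|h_k^{p-1}-h^{p-1}\|_{L^\infty(M)} \to 0,
\]
using $\|u_k\|_{L^p(M)}=1$ and the finite volume of $M$. Both terms vanish, which contradicts the lower bound $\eta > 0$. Since every subsequence therefore has a further subsequence along which $\mathsf m(u_k)\to 0$, we conclude that $\mathsf m(u_k) = \mathrm o_k(1)$.

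The only substantive point is the uniformity in $h$, because weak convergence alone controls a single fixed test function. Compactness of $\mathscr O_1$ supplies exactly this uniformity, which is why the hypothesis $Y_M < Y_\circ$ enters. If one wished to avoid the selection of maximizers $h_k$, the same idea can be phrased as a finite $\varepsilon$-net argument: cover $\mathscr O_1$ by finitely many $\mathcal C^0$-balls and apply weak convergence to each of their centers.
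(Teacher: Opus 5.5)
Your proposal is correct and follows essentially the same route as the paper: choose maximizers $h_k\in\mathscr O_1$, use the compactness of $\mathscr O_1$ to get $h_k\to h$ uniformly along a subsequence, and kill $\int_M h^{p-1}u_k\ud V_g$ by weak convergence. The remainder is controlled by H\"older with $\|u_k\|_{L^p(M)}=1$, and the subsequence principle then gives the full sequence. Your write-up simply makes explicit the H\"older bound that the paper summarizes as ``by the triangle inequality.''
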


\begin{proof}
Let $(h_k)_{k \in \mathbb{N}} \subset \mathscr{O}_1$ be such that 
$\mathsf{m}(u_k) = (\int_{M} h_k^{p-1} u_k \ud V_g)^2$. By compactness of $\mathscr{O}_1$, 
there is a subsequence $h_{k_j} \to h$ uniformly. Since $h^{p-1} \in L^{p'}(M)$ and 
$u_{k_j} \rightharpoonup 0$ in $L^p(M)$, it follows that $\int_M h^{p-1} u_{k_j} 
\ud V_g \to 0$. By the triangle inequality,
\[
\mathsf{m}(u_{k_j}) = \left(\int_{M} h_{k_j}^{p-1} u_{k_j} \ud V_g\right)^2 \to 0 
\quad {\rm as} \quad k \to \infty.
\]
The same argument, applied to every subsequence, yields $\mathsf{m}(u_k) = \mathrm{o}_k(1)$ for 
the full sequence.
\end{proof}

We use the following continuity property of the stability mass $\mathsf{m}$ in Claim~2 of the proof of Theorem~\ref{theorem existence stab opt gen M}. It follows from the compactness of $\mathscr{O}_1$ and does not require a Br\'ezis--Lieb-type splitting for $\mathsf{m}$.

\begin{lemma}
\label{lemma m continuity}
Let $(M,g)$ be a closed $n$-dimensional Riemannian manifold with $n \geqslant 3$ 
such that $0 < Y(M,[g]) < Y(\Ss^n,[g_\circ])$.
If $(u_k)_{k \in \mathbb{N}} \subset H^1(M)$ with $u_k = u + v_k$
and $v_k \rightharpoonup 0$ in $H^1(M)$, then
\[
\mathsf{m}(u_k) = \mathsf{m}(u) + \mathrm{o}_k(1).
\]
\end{lemma}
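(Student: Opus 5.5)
The plan is to exploit the compactness of $\mathscr O_1$ from Lemma~\ref{lemma compactness} together with the compactness of the Sobolev embedding below the critical exponent. For each $h \in \mathscr O_1$, write
\[
\Phi_k(h) := \int_M h^{p-1} u_k \ud V_g = \Phi(h) + \int_M h^{p-1} v_k \ud V_g, \qquad \Phi(h) := \int_M h^{p-1} u \ud V_g .
\]
By definition, $\mathsf m(u_k) = \sup_{h\in\mathscr O_1} \Phi_k(h)^2$ and $\mathsf m(u) = \sup_{h \in \mathscr O_1}\Phi(h)^2$. It therefore suffices to show that
\[
\sup_{h \in \mathscr O_1} \left| \int_M h^{p-1} v_k \ud V_g \right| = \mathrm o_k(1).
\]
Indeed, $\sup_h |\Phi(h)| \leqslant \|u\|_{L^p(M)}$ is bounded, and $|a^2 - b^2| \leqslant |a-b|(|a|+|b|)$ gives $|\sup_h \Phi_k^2 - \sup_h \Phi^2| \leqslant \sup_h |\Phi_k^2 - \Phi^2|$.

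The key step is to prove this uniform convergence. I would do it in one of two equivalent ways. The first is direct: since $v_k \rightharpoonup 0$ in $H^1(M)$ and $M$ is closed, Rellich--Kondrachov gives $v_k \to 0$ strongly in $L^1(M)$ (indeed in $L^q(M)$ for every $q<p$). Lemma~\ref{lemma compactness} shows $\mathscr O_1$ is compact in $\mathcal C^2(M)$, hence bounded in $L^\infty(M)$, say $\|h\|_{L^\infty(M)} \leqslant C$ for all $h \in \mathscr O_1$. Then
\[
\sup_{h\in\mathscr O_1}\left|\int_M h^{p-1} v_k \ud V_g\right| \leqslant C^{p-1}\|v_k\|_{L^1(M)} \to 0 .
\]

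Alternatively, one can mimic the proof of Lemma~\ref{lemma m to 0 weak conv}. Pick maximizers $h_k$ for $\Phi_k$ (they exist because $\mathscr O_1$ is compact and $h\mapsto \Phi_k(h)$ is continuous under uniform convergence). Pass to a subsequence with $h_k \to h$ uniformly, and use that weak convergence paired with strongly convergent test functions $h_k^{p-1} \to h^{p-1}$ in $L^{p'}(M)$ gives a vanishing pairing. Running this over every subsequence gives convergence of the full sequence.

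I do not expect a real obstacle here. The only point needing care is the uniformity in $h$, which is exactly what the $L^\infty$ bound coming from the $\mathcal C^2$-compactness of $\mathscr O_1$ provides. This is also why no Br\'ezis--Lieb splitting is needed: the test functions $h^{p-1}$ are uniformly bounded, so the merely weakly vanishing part $v_k$ contributes nothing in the limit.
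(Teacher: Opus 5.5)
Your first route is correct, and it differs from the paper's.

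\textbf{The paper's argument.} The paper proves the two one-sided bounds separately. For one direction it tests $\mathsf m(u_k)$ with a maximizer $h_*$ of $\mathsf m(u)$, using only $v_k \rightharpoonup 0$ weakly in $L^p(M)$. For the other it takes maximizers $h_k$ of $\mathsf m(u_k)$, extracts $h_k \to h$ uniformly from the compactness of $\mathscr O_1$, and concludes by a subsequence argument.

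\textbf{Your argument.} You prove the uniform statement $\sup_{h\in\mathscr O_1}|\int_M h^{p-1}v_k\ud V_g|\to 0$ in one step. This follows from Rellich--Kondrachov ($v_k\to 0$ in $L^1(M)$) and the $L^\infty$ bound on $\mathscr O_1$. You then pass to the suprema via $|\sup f-\sup g|\leqslant\sup|f-g|$.

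\textbf{What each buys.} Your route needs neither maximizers nor subsequences, and it uses only boundedness, not compactness, of $\mathscr O_1$ in $L^\infty$. It is also quantitative: $|\mathsf m(u_k)-\mathsf m(u)|\lesssim\|v_k\|_{L^1(M)}(\|u_k\|_{L^p(M)}+\|u\|_{L^p(M)})$. The price is that it consumes the $H^1$ hypothesis through the compact embedding. The paper's argument works under mere weak $L^p$ convergence, in line with Lemma~\ref{lemma m to 0 weak conv}.

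Your two-sided estimate also sidesteps a slip in the paper's write-up, where both inequality directions are reversed. Testing with $h_*$ actually gives $\mathsf m(u_k)\geqslant\mathsf m(u)+\mathrm o_k(1)$, and the maximizers $h_k$ give $\mathsf m(u_k)\leqslant\mathsf m(u)+\mathrm o_k(1)$. The two halves still combine to the claim.

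\textbf{On your alternative sketch.} Choosing $h_k$ to maximize $\Phi_k^2$ only yields $\limsup_k\mathsf m(u_k)\leqslant\mathsf m(u)$. To make it prove the uniform statement you announced, choose instead $h_k$ maximizing $|\int_M h^{p-1}v_k\ud V_g|$. Alternatively, add the test with $h_*$ as the paper does.
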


\begin{proof}
For the upper bound, let $h_* \in \mathscr{O}_1$ achieve $\mathsf{m}(u)$. Since $h_*^{p-1} 
\in L^{p'}(M)$ and $v_k \rightharpoonup 0$ in $L^p(M)$, one has 
\[
\int_M h_*^{p-1} v_k \ud V_g =\mathrm{o}_k(1), 
\]
which in turn yields
\[
\mathsf{m}(u_k) \leqslant \left(\int_M h_*^{p-1} u_k \ud V_g\right)^2 = \left(\int_M h_*^{p-1} u 
\ud V_g + \mathrm{o}_k(1)\right)^2 = \mathsf{m}(u) + \mathrm{o}_k(1).
\]
For the lower bound, let $(h_k)_{k \in \mathbb{N}} \subset \mathscr{O}_1$ achieve 
$\mathsf{m}(u_k)$. By compactness of $\mathscr{O}_1$, there exists $h \in \mathscr{O}_1$ 
such that after passing to a subsequence $h_k \to h$ uniformly. Then, since $\mathsf{m}(u) \leqslant (\int_M h^{p-1} u \ud V_g)^2$, it holds
\[
\mathsf{m}(u_k) = \left(\int_M h_k^{p-1} u_k \ud V_g\right)^2 = \left(\int_M h^{p-1} u 
\ud V_g + \mathrm{o}_k(1)\right)^2 \geqslant \mathsf{m}(u) + \mathrm{o}_k(1).
\]
Finally, after taking further 
subsequences of this convergent subsequence, we see that  
the convergence holds for the full sequence.
\end{proof}

\subsection{Bubbling analysis}
The following proposition is a key ingredient in the proof of Theorem~\ref{theorem existence stab opt gen M}. It shows that one can improve the Yamabe constant from $Y_M$ to $Y_\circ$ along sequences that converge weakly to zero.

\begin{proposition}
\label{proposition Sobolev for bubbles}
Let $(M,g)$ be a closed $n$-dimensional Riemannian manifold with $n \geqslant 3$ 
such that $0 < Y(M,[g]) < Y(\Ss^n,[g_\circ])$.
If $(v_k)_{k \in \mathbb{N}} \subset H^1(M)$ with $v_k \rightharpoonup 0$ in $H^1(M)$ and $\|v_k\|_{L^p(M)} = 1$, then
\[
(Y(\Ss^n,[g_\circ]) + \mathrm{o}_k(1))  \leqslant \mathcal E[v_k] \quad {\rm as} \quad k \to \infty.
\]
\end{proposition}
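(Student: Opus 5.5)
The plan is to combine Aubin's almost-sharp Sobolev inequality \eqref{aubin} with the compact embedding $H^1(M) \hookrightarrow L^2(M)$, just as in the proof of Lemma~\ref{lemma compactness}.

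First, since $v_k \rightharpoonup 0$ in $H^1(M)$ and $M$ is closed, Rellich--Kondrachov gives $v_k \to 0$ strongly in $L^2(M)$. Hence $\|v_k\|_{L^2(M)} = \mathrm{o}_k(1)$. Because $R_g$ is bounded on $M$, this also gives
\[
c_n \int_M R_g v_k^2 \ud V_g = \mathrm{o}_k(1).
\]
Therefore $\mathcal E[v_k] = \|\nabla v_k\|_{L^2(M)}^2 + \mathrm{o}_k(1)$. Note that $\|\nabla v_k\|_{L^2(M)}$ stays bounded, since weakly convergent sequences are bounded.

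Second, fix $\eps > 0$ and apply \eqref{aubin} to $w = v_k$. Using $\|v_k\|_{L^p(M)} = 1$, this yields
\[
1 \leqslant \frac{1+\eps}{Y_\circ}\|\nabla v_k\|_{L^2(M)}^2 + C_\eps \|v_k\|_{L^2(M)}^2 = \frac{1+\eps}{Y_\circ}\|\nabla v_k\|_{L^2(M)}^2 + \mathrm{o}_k(1).
\]
Rearranging and inserting the first step, we get
\[
\liminf_{k\to\infty} \mathcal E[v_k] \geqslant \frac{Y_\circ}{1+\eps}.
\]

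Third, since $\eps > 0$ is arbitrary, it follows that $\liminf_k \mathcal E[v_k] \geqslant Y_\circ$. This is exactly the claim $\mathcal E[v_k] \geqslant Y_\circ + \mathrm{o}_k(1)$, where $\mathrm{o}_k(1)$ stands for a sequence whose negative part tends to zero.

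No step is a real obstacle. The only care needed is the order of the limits: the constant $C_\eps$ blows up as $\eps \to 0$, so we must send $k \to \infty$ first, with $\eps$ fixed, and only then let $\eps \to 0$. Alternatively, the Hebey--Vaugon endpoint inequality with $\eps = 0$ would give the result in a single step.
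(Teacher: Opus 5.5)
Your proof is correct and follows essentially the same route as the paper: Aubin's almost-sharp Sobolev inequality, with the compact embedding $H^1(M)\hookrightarrow L^2(M)$ killing both the $C_\eps\|v_k\|_{L^2(M)}^2$ term and the scalar curvature term. The only difference is cosmetic. The paper picks $\eps=\eps(k)\to 0$ slowly (a diagonal choice), while you take the $\liminf$ at fixed $\eps$ and then let $\eps\to 0$, which is equivalent; you also use \eqref{aubin} in its correct orientation, whereas the inequality displayed in the paper's proof has $\|\nabla u\|_{L^2(M)}^2$ and $\|u\|_{L^p(M)}^2$ interchanged.
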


\begin{proof}
Recall that we abbreviate $Y_\circ = Y(\Ss^n,[g_\circ])$. 
    By Aubin's Sobolev inequality \cite{Aubin1976JDG} (see also \cite[Theorem 2.3]{LP}), for every $\eps > 0$ there is $C_\eps > 0$ such that 
    \[ \|\nabla u\|_{L^2(M)}^2 \leqslant (Y_\circ^{-1} + \eps) \|u\|_{L^p(M)}^2 + C_\eps \|u\|_{L^2(M)}^2. \]
    For $(v_k)_{k \in \mathbb{N}} \subset H^1(M)$ with $v_k \rightharpoonup 0$ in $H^1(M)$ and $\|v_k\|_{L^p(M)} = 1$, the inequality below holds 
    \[
    \left|\int_M R_g v_k^2 \, \ud V_g\right| \leqslant \|R_g\|_{L^\infty(M)}\, \|v_k\|_{L^2(M)}^2,
    \]
    which, by a simple rearrangement, yields
    \[ 
    \mathcal E[v_k] \geqslant (Y_\circ - \eps) - C_\eps \|v_k\|_{L^2(M)}^2 \quad {\rm for \ all} \quad k\in\mathbb{N}. 
    \]
    Since the embedding of $H^1(M)$ into $L^2(M)$ is compact, we have $\|v_k\|_{L^2(M)} \to 0$. 
    Therefore, for each $k\in\mathbb{N}$, we can choose $\eps(k)=\mathrm{o}_k(1)$ tending to zero sufficiently slowly so that $C_{\eps(k)} \|v_k\|_{L^2(M)}^2 =\mathrm{o}_k(1)$ as $k \to \infty$. For this choice of $=\mathrm{o}_k(1)$, the statement from the proposition follows.
\end{proof}

From Proposition~\ref{proposition Sobolev for bubbles}, we deduce the explicit expression 
for $c_{\gamma}^{\rm bub}(M,g)$.

\begin{corollary}
\label{cor:bubble-formula}
Let $(M,g)$ be a closed $n$-dimensional Riemannian manifold with $n \geqslant 3$ 
such that $0 < Y(M,[g]) < Y(\Ss^n,[g_\circ])$ and let $\gamma \geqslant 2$. Then, it holds
\begin{equation}\label{eq:bubble_formula}
    c_{\gamma}^{\rm bub}(M,g) = 1 - \frac{Y(M,[g])}{Y(\Ss^n,[g_\circ])}.
\end{equation}
\end{corollary}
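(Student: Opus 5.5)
We prove the two inequalities $c_\gamma^{\rm bub}\geqslant 1-Y_M/Y_\circ$ and $c_\gamma^{\rm bub}\leqslant 1-Y_M/Y_\circ$ separately. Both rest on one computation of $\mathcal S_\gamma(u_k)$ along an $L^p$-normalized sequence $u_k\rightharpoonup 0$. By Lemma~\ref{lemma DEFFL distance} we have
\[
\dist(u_k,\mathscr O)^2=\mathcal E[u_k]-Y_M\,\mathsf m(u_k).
\]
Weak convergence in $H^1(M)$ implies weak convergence in $L^p(M)$, so Lemma~\ref{lemma m to 0 weak conv} gives $\mathsf m(u_k)=\mathrm o_k(1)$. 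Hence $\dist(u_k,\mathscr O)^2=\mathcal E[u_k]+\mathrm o_k(1)$, and
\[
\mathcal S_\gamma(u_k)=\frac{\mathcal E[u_k]^{(\gamma-2)/2}\bigl(\mathcal E[u_k]-Y_M\bigr)}{\bigl(\mathcal E[u_k]+\mathrm o_k(1)\bigr)^{\gamma/2}}.
\]
Proposition~\ref{proposition Sobolev for bubbles} gives $\mathcal E[u_k]\geqslant Y_\circ+\mathrm o_k(1)$, which is bounded away from zero. The quotient therefore equals $f(\mathcal E[u_k])+\mathrm o_k(1)$, where $f(E):=1-Y_M/E$. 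This holds uniformly: the $\mathrm o_k(1)$ error is relative to $\mathcal E[u_k]$, so it is harmless even if $\mathcal E[u_k]$ is unbounded.

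\emph{Lower bound.} The function $f$ is increasing in $E$. Combined with $\mathcal E[u_k]\geqslant Y_\circ+\mathrm o_k(1)$, this gives
\[
\liminf_{k\to\infty}\mathcal S_\gamma(u_k)\geqslant 1-\frac{Y_M}{Y_\circ}.
\]
One bookkeeping point: the definition \eqref{eq:stability_constant_bubble} only requires $u_k\rightharpoonup 0$, and we read this as weak convergence in $H^1(M)$. If instead only $L^p$-weak convergence is meant, we pass to a subsequence realizing the $\liminf$. If $\mathcal E[u_k]$ stays bounded along it, the sequence is bounded in $H^1(M)$, so its weak $H^1$-limit coincides with its weak $L^p$-limit, which is $0$. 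If $\mathcal E[u_k]\to\infty$, then $f(\mathcal E[u_k])\to 1$, which already exceeds the threshold.

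\emph{Upper bound.} We need a sequence with $\|u_k\|_{L^p(M)}=1$, $u_k\rightharpoonup 0$ and $\mathcal E[u_k]\to Y_\circ$. Fix $x_0\in M$, a cutoff $\eta$ supported in a normal coordinate ball around $x_0$, and set
\[
u_\mu=\eta\,U_{\mu,0}\big/\|\eta\,U_{\mu,0}\|_{L^p(M)},\qquad \mu\to0.
\]
The standard Aubin estimates give $\mathcal E[u_\mu]\to Y_\circ$. The contribution of $R_g$ is $\mathcal O(\|U_{\mu,0}\|_{L^2}^2)\to0$ for $n\geqslant 3$: in dimensions $n=3,4$ this uses the cutoff, giving order $\mu$ and $\mu^2|\log\mu|$ respectively. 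The metric and cutoff errors are $\mathrm o(1)$. Next, $u_\mu$ is bounded in $H^1(M)$ and $u_\mu\to0$ pointwise away from $x_0$, so $u_\mu\rightharpoonup0$. By the computation above, $\mathcal S_\gamma(u_\mu)\to 1-Y_M/Y_\circ$.

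The only step needing real care is this test-function estimate $\mathcal E[u_\mu]\to Y_\circ$. It is classical, since it is Aubin's proof of $Y_M\leqslant Y_\circ$, and a rough $\mathrm o(1)$ error suffices here. Everything else is immediate from the lemmas already established.
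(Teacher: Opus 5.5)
Your proof is correct and follows the paper's argument. The lower bound comes from Proposition~\ref{proposition Sobolev for bubbles} together with the monotonicity of $E\mapsto 1-Y_M/E$, and the upper bound from a single concentrating Aubin--Talenti bubble combined with $\mathsf m(u_k)\to 0$ via Lemma~\ref{lemma m to 0 weak conv}; the only minor difference is that for the lower bound the paper uses the cruder inequality $\dist(u_k,\mathscr O)^2\leqslant \mathcal E[u_k]$, which gives $\mathcal S_\gamma(u_k)\geqslant 1-Y_M/\mathcal E[u_k]$ directly and makes your uniformity discussion and the appeal to $\mathsf m(u_k)\to 0$ unnecessary in that direction.
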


\begin{proof}
Let $(u_k)_{k \in \mathbb{N}}$ be any sequence with $\|u_k\|_{L^p(M)} = 1$ and $u_k \rightharpoonup 0$ in $H^1(M)$. In addition, since $\dist(u_k, \mathscr{O})^2 \leqslant \mathcal E[u_k]$, one has
\begin{align*}
\mathcal{S}_\gamma(u_k) = \frac{\mathcal E[u_k]^{(\gamma-2)/2} \left( \mathcal{E}[u_k] - Y_M \right)}{\dist(u_k,\mathscr{O})^\gamma}
&\geqslant \frac{\mathcal E[u_k]^{(\gamma-2)/2} \left( \mathcal{E}[u_k] - Y_M \right)}{\mathcal E[u_k]^{\gamma/2}} \\
&= 1 - \frac{Y_M}{\mathcal E[u_k]}\\
&\geqslant 1 - \frac{Y_M}{Y_\circ} + \mathrm{o}_k(1),
\end{align*}
where the last step uses Proposition~\ref{proposition Sobolev for bubbles}. Since $(u_k)_{k \in \mathbb{N}}$ was arbitrary, one has 
\[
c_\gamma^{\rm bub} \geqslant 1 - \frac{Y_M}{Y_\circ}.
\]

For the reverse inequality, we place a single concentrating Aubin--Talenti bubble about 
a fixed point on $M$. The resulting normalized sequence $(u_k)_{k \in \mathbb{N}}$ satisfies $\|u_k\|_{L^p(M)} = 1$ and
\begin{equation}
\label{E dec prop}
\mathcal E[u_k] = Y_\circ + \mathrm{o}_k(1);
\end{equation}
see \cite[Lemma~3.4]{LP} and the more detailed computation in Proposition~\ref{prop:aubin-energy} below. By Lemmas~\ref{lemma DEFFL distance} and~\ref{lemma m to 0 weak conv}, it follows
\begin{equation}
\label{d dec prop}
\dist(u_k, \mathscr{O})^2 = \mathcal E[u_k] + \mathrm{o}_k(1).
\end{equation}
Combining \eqref{E dec prop} and \eqref{d dec prop}, we obtain
\[
\mathcal{S}_\gamma(u_k) = \frac{Y_\circ^{(\gamma-2)/2}(Y_\circ - Y_M)}{Y_\circ^{\gamma/2}} + \mathrm{o}_k(1) = 1 - \frac{Y_M}{Y_\circ} + \mathrm{o}_k(1),
\]
which proves the asymptotics \eqref{eq:bubble_formula}.
\end{proof}

\section{Proof of Theorem~\ref{theorem existence stab opt gen M}}\label{sec:proof-thm1}
This section is devoted to proving the existence of optimizers for the stability quotient $\mathcal S_\gamma$. We begin with a technical calculus lemma that is used in Claim~3 of the proof.

\begin{lemma}
\label{lemma:F-no-local-min}
Let $\gamma \geqslant 2$, $p> 2$, and $a \geqslant b \geqslant c > 0$ with $b < 1$. Let $\Psi:(0,\infty)\to \R$ be the positive function defined as
\[
\Psi(X) = \frac{(a + X)^{(\gamma-2)/2}\left(a + X - b(1 + X^{p/2})^{2/p}\right)}{(a + X - c)^{\gamma/2}}.
\]
Then, $\Psi$ does not admit any local minimum on $(0, \infty)$.
\end{lemma}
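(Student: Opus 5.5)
The plan is to show that every critical point of $\Psi$ on $(0,\infty)$ is a strict local maximum, i.e.\ $\Psi'(X_0)=0$ forces $\Psi''(X_0)<0$. Since $\Psi$ is smooth on $(0,\infty)$, any interior local minimum would have to satisfy $\Psi'=0$ and $\Psi''\geqslant 0$, so this excludes local minima.

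To set this up, write $E(X):=a+X$ and $f(X):=(1+X^{p/2})^{2/p}$. Then $\Psi=N(X)\,\varphi(E(X))$ with
\[
N(X):=a+X-b\,f(X),\qquad \varphi(E):=E^{(\gamma-2)/2}(E-c)^{-\gamma/2}.
\]
First I check that everything is well defined and positive. Since $a\geqslant c$ and $X>0$ we have $E-c>0$. Since $p>2$ we have $f(X)\leqslant 1+X$, and therefore $N(X)\geqslant (a-b)+(1-b)X>0$, using $a\geqslant b$ and $b<1$.

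The first structural input is convexity of $f$. Note that $f(X)=\|(1,X)\|_{\ell^{p/2}}$ with $p/2>1$, so $f$ is convex. A direct computation gives strict convexity on $X>0$:
\[
f''(X)=\tfrac{p-2}{2}\,X^{p/2-2}(1+X^{p/2})^{2/p-2}>0 .
\]
Since $b>0$, it follows that $N''=-bf''<0$ on $(0,\infty)$.

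The second input is convexity of $\psi:=1/\varphi$, which in $E$ reads $\psi(E)=E\,(1-c/E)^{\gamma/2}$. Writing $g(E)=(1-c/E)^{\gamma/2}$, we have $\psi''=2g'+Eg''$. The first-order terms cancel and one is left with
\[
\psi''(E)=\tfrac{\gamma}{2}\left(\tfrac{\gamma}{2}-1\right)\left(1-\tfrac{c}{E}\right)^{\gamma/2-2}\frac{c^2}{E^3}\geqslant 0,
\]
which uses $\gamma\geqslant 2$. Rewriting this in terms of $\varphi$, using $E'=1$, gives
\[
\varphi''-\frac{2\varphi'^2}{\varphi}=-\varphi^2\,\psi''\leqslant 0 .
\]

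With these two facts the critical-point computation is immediate. At a critical point $X_0$, the equation $\Psi'=N'\varphi+N\varphi'=0$ gives $N'=-N\varphi'/\varphi$. Substituting this into
\[
\Psi''=N''\varphi+2N'\varphi'+N\varphi''
\]
yields
\[
\Psi''(X_0)=N''(X_0)\,\varphi+N\left(\varphi''-\frac{2\varphi'^2}{\varphi}\right).
\]
The first term is strictly negative because $N''<0$ and $\varphi>0$. The second term is $\leqslant 0$ because $N>0$ and the bracket is $-\varphi^2\psi''\leqslant 0$. Hence $\Psi''(X_0)<0$, every critical point is a strict local maximum, and $\Psi$ has no local minimum on $(0,\infty)$.

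The only genuinely delicate step is recognizing the right decomposition: one has to pair the concavity of the numerator $N$ with the convexity of $1/\varphi$ rather than with convexity of $\varphi$ itself. Once that is seen, the two explicit second-derivative computations above are the whole content, and the hypothesis $\gamma\geqslant 2$ enters exactly through the sign of $\tfrac{\gamma}{2}-1$ in $\psi''$.
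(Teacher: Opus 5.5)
Your proof is correct and is essentially the paper's argument. Both show that at any critical point $\Psi''=\varphi\,N''+N\bigl(\varphi''-2\varphi'^2/\varphi\bigr)<0$, using the strict concavity of the numerator factor together with the sign of the second term. Your identity $\varphi''-2\varphi'^2/\varphi=-\varphi^2(1/\varphi)''\leqslant 0$ is the paper's $\varsigma'-\varsigma^2=-\frac{\gamma(\gamma-2)}{4}\bigl(\frac{1}{a+X}-\frac{1}{a+X-c}\bigr)^2$ with $\varsigma=(\log\varphi)'$, written in a different form. Your explicit verifications of $N>0$ and $f''>0$ fill in details that the paper states only briefly.
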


\begin{proof}
We note that $\Psi$ is smooth on $(0, +\infty)$. We show that for any point $Y \in (0, +\infty)$ with $\Psi'(Y) = 0$, one has $\Psi''(Y) < 0$, from which the assertion follows immediately.

To make calculations efficient and transparent, we abbreviate
\[
\psi(X) := a + X, \quad h(X) := a + X - b\left(1 + X^{p/2}\right)^{2/p} = \psi(X) - b\left(1 + X^{p/2}\right)^{2/p}
\]
as well as
\[
\alpha := \frac{\gamma - 2}{2} \quad {\rm and} \quad \beta := \frac{\gamma}{2}.
\]
We note that $\psi$ and $h$ are both positive on $(0, +\infty)$ as a consequence of $a \geqslant b > 0$. Moreover, $\psi' \equiv 1$, and $h$ is increasing and concave. Indeed, setting $\phi(X) = (1 + X^{p/2})^{2/p}$, one verifies that $\phi'$ is increasing with $0 < \phi'(X) < 1$, so that $h' = 1 - b\phi'$ is decreasing with $0 < h' < 1$, where we used that $b < 1$.

With these shorthands, $\Psi$ reads
\[
\Psi(X) = \frac{\psi(X)^\alpha\, h(X)}{(\psi(X) - c)^\beta} = A(X)\, h(X),
\]
where
\[
A(X) := \frac{\psi(X)^\alpha}{(\psi(X) - c)^\beta}.
\]
We compute, for any $X \in (0, +\infty)$,
\[
\Psi'(X) = A'(X)\, h(X) + A(X)\, h'(X) = A(X)\left(h'(X) + \varsigma(X)\, h(X)\right),
\]
where we introduced the logarithmic derivative
\[
\varsigma(X) := \frac{A'(X)}{A(X)} = (\log A)'(X) = \frac{\alpha}{\psi(X)} - \frac{\beta}{\psi(X) - c}.
\]
Since $A > 0$, every critical point $Y \in (0, +\infty)$ of $\Psi$ satisfies
\begin{equation}
\label{eq:h-q-critical}
h'(Y) + \varsigma(Y)\, h(Y) = 0.
\end{equation}

A direct computation of the second derivative shows that, at any critical point $Y$ satisfying \eqref{eq:h-q-critical},
\[
\Psi''(Y) = A(Y)\left(h''(Y) + \left(\varsigma'(Y) - \varsigma(Y)^2\right) h(Y)\right).
\]
Since $A > 0$, $h > 0$, and $h'' < 0$ by concavity, it suffices to prove that $\varsigma'(X) - \varsigma(X)^2 \leqslant 0$ for every $X \in (0, +\infty)$. A direct computation yields
\begin{align*}
\varsigma' - \varsigma^2 &= -\frac{\alpha}{\psi^2} + \frac{\beta}{(\psi - c)^2} - \frac{\alpha^2}{\psi^2} + \frac{2\alpha\beta}{\psi(\psi-c)} - \frac{\beta^2}{(\psi-c)^2} \\
&= -\frac{\alpha + \alpha^2}{\psi^2} + \frac{\beta - \beta^2}{(\psi-c)^2} + \frac{2\alpha\beta}{\psi(\psi-c)}.
\end{align*}
Notice that the exponents $\alpha = \tfrac{\gamma-2}{2}$ and $\beta = \tfrac{\gamma}{2}$ satisfy the identity
\[
\alpha + \alpha^2 = \alpha\beta = -(\beta - \beta^2) = \frac{\gamma(\gamma-2)}{4}.
\]
Finally, by substituting, one obtains
\begin{align*}
\varsigma' - \varsigma^2 &= \frac{\gamma(\gamma-2)}{4}\left(-\frac{1}{\psi^2} + \frac{2}{\psi(\psi-c)} - \frac{1}{(\psi-c)^2}\right) = -\frac{\gamma(\gamma-2)}{4}\left(\frac{1}{\psi} - \frac{1}{\psi - c}\right)^2 \leqslant 0,
\end{align*}
since $\gamma \geqslant 2$. It follows that $\Psi''(Y) < 0$ at every critical point $Y$, which proves the lemma.
\end{proof}

Now we are ready to prove our first main result in this manuscript

\begin{proof}[Proof of Theorem~\ref{theorem existence stab opt gen M}]
We divide the proof into three claims.
Let $(u_k)_{k \in \mathbb{N}}$ be a minimizing sequence for $c_\gamma$, normalized so that $\|u_k\|_{L^p(M)} = 1$. 

\medskip
\noindent{\bf Claim~1} {\rm (Nontrivial weak limit):} {\it $(u_k)_{k \in \mathbb{N}}$ has a nontrivial weak limit $u$ in $H^1(M)$.}

\smallskip
\noindent{\it Proof.} First, $(u_k)_{k \in \mathbb{N}}$ is bounded in $H^1(M)$, since otherwise $\mathcal{E}[u_k] \to +\infty$ and, using $\dist(u, \mathscr{O})^2 \leqslant \mathcal E[u]$, one would obtain
\[
c_\gamma = \lim_{k \to \infty} \mathcal{S}_\gamma(u_k) \geqslant \lim_{k \to \infty} \frac{\mathcal E[u_k] - Y_M}{\mathcal E[u_k]} = 1,
\]
which contradicts the inequality
\[
c_\gamma\leqslant c_\gamma^{\rm bub} = 1 - \frac{Y_M}{Y_\circ} < 1.
\]

By the Banach--Alaoglu theorem, there exists $u \in H^1(M)$ such that, up to a subsequence, $u_k \rightharpoonup u$ in $H^1(M)$, $u_k \to u$ in $L^2(M)$, and $u_k(x) \to u(x)$ a.e.\ on $M$. If $u \equiv 0$, then $(u_k)_{k \in \mathbb{N}}$ is admissible for $c_\gamma^{\rm bub}$ and we obtain
\[
c_\gamma = \liminf_{k \to \infty} \mathcal{S}_\gamma(u_k) \geqslant c_\gamma^{\rm bub},
\]
contradicting $c_\gamma < c_\gamma^{\rm bub}$. Hence we may write $u_k = u + v_k$ with $u \not\equiv 0$, $v_k \rightharpoonup 0$ in $H^1(M)$, and $v_k \to 0$ in $L^2(M)$. This proves Claim~1.

\medskip
\noindent{\bf Claim~2} {\rm (Strong convergence):} {\it $v_k \to 0$ strongly in $H^1(M)$.}

\smallskip
\noindent{\it Proof.} The key ingredients are the convexity of $t \mapsto (1 + t^{p/2})^{2/p}$ and the strict inequality $c_\gamma < c_\gamma^{\rm bub}$, together with a calculus lemma (Lemma~\ref{lemma:F-no-local-min}) that rules out local minima of the auxiliary function $\Psi$ below.

Assume again that $u_k = u + v_k$ with $\|u\|_{L^p(M)} = 1$ and $v_k \rightharpoonup 0$ in $H^1(M)$. Using the decompositions from above, one has
\begin{equation}\label{eq:F-quotient}
c_\gamma + \mathrm{o}_k(1) = \mathcal{S}_\gamma(u_k) = \frac{(\mathcal{E}[u] + \mathcal{E}[v_k])^{(\gamma-2)/2}\left(\mathcal{E}[u] + \mathcal{E}[v_k] - Y_M(1 + \|v_k\|_{L^p(M)}^p)^{2/p}\right)}{(\mathcal{E}[u] + \mathcal{E}[v_k] - Y_M\mathsf{m}(u))^{\gamma/2}}.
\end{equation}
By Proposition~\ref{proposition Sobolev for bubbles}, one obtains
\[
\|v_k\|_{L^p(M)}^p \leqslant Y_\circ^{-p/2}\,\mathcal{E}[v_k]^{p/2} + \mathrm{o}_k(1).
\]
We set $a,b,c\in (0,\infty)$ such that $a \geqslant b \geqslant c$ as
\[
a := \frac{\mathcal{E}[u]}{Y_\circ}, \quad b := \frac{Y_M}{Y_\circ}, \quad c := \frac{Y_M\,\mathsf{m}(u)}{Y_\circ}, \quad {\rm and} \quad X_k := \frac{\mathcal{E}[v_k]}{Y_\circ}.
\]
Dividing both the numerator and denominator of \eqref{eq:F-quotient} by $Y_\circ^{\gamma/2}$, one obtains
\[
c_\gamma + \mathrm{o}_k(1) \geqslant \Psi(X_k),
\]
where
\begin{equation}\label{eq:F-def}
\Psi(X) := \frac{(a + X)^{(\gamma-2)/2}\left(a + X - b(1 + X^{p/2})^{2/p}\right)}{(a + X - c)^{\gamma/2}}.
\end{equation}

By Lemma~\ref{lemma:F-no-local-min}, $\Psi$ has no local minimum on $(0,+\infty)$, and so its infimum satisfies
\[
\inf_{X \in (0,+\infty)} \Psi(X) = \min\left\{\lim_{X \to 0^+} \Psi(X),\, \lim_{X \to +\infty} \Psi(X)\right\}.
\]
A direct computation yields
\[
\lim_{X \to 0^+} \Psi(X) = \mathcal{S}_\gamma(u) \geqslant c_\gamma \quad {\rm and} \quad \lim_{X \to +\infty} \Psi(X) = c_\gamma^{\rm bub}.
\]
Since $\Psi(X_k) \leqslant c_\gamma + \mathrm{o}_k(1)$ and $c_\gamma < c_\gamma^{\rm bub}$ by hypothesis, the sequence $(X_k)$ cannot tend to $+\infty$. On the one hand, if there exists a subsequence $(X_{k_j})$ such that $X_{k_j} \to X^*$ for some $X^* \in (0,+\infty)$, then by continuity $\Psi(X^*) \leqslant c_\gamma$. On the other hand, one has
\[
\Psi(X^*) \leqslant \Psi(0^+) \quad {\rm and} \quad  \Psi(X^*) < \Psi(+\infty),
\]
which would imply that $\Psi(X^*)=\inf_{X\in (0,+\infty)}\Psi(X)$ attains its infimum over $(0,+\infty)$ at the interior point, producing a local minimum and contradicting Lemma~\ref{lemma:F-no-local-min}. Therefore $X_k \to 0$, whence $v_k \to 0$ in $H^1(M)$, and Claim~2 is proved.

\medskip
\noindent{\bf Claim~3} {\rm (Non-optimality):} {\it $u \notin \mathscr{O}$.}

\smallskip
\noindent{\it Proof.} Since $v_k \to 0$ strongly in $H^1(M)$ by Claim~2, we have $u_k \to u$ in $H^1(M)$. If $u \in \mathscr{O}$, then $(u_k)$ would be an admissible test sequence for $c_\gamma^{\loc}$, giving
\[
c_\gamma = \liminf_{k \to \infty} \mathcal{S}_\gamma(u_k) \geqslant c_\gamma^{\loc},
\]
which contradicts $c_\gamma < c_\gamma^{\loc}$. Therefore $u \notin \mathscr{O}$, and Claim~3 is proved.

\medskip
Combining Claims~1--3, $u$ is an admissible optimizer for $c_\gamma$, which completes the proof.
\end{proof}

\section{Proof of Theorem~\ref{theorem c < c-bubble}}\label{sec:proof-thm2}

In this section, we prove that $c_{\gamma}(M,g) < c_{\gamma}^{\rm bub}(M,g)$ for manifolds of 
dimension $n \geqslant 6$ that are not locally conformally flat. The strategy is to construct a 
concentrating family of test functions $(u_\mu)_{\mu > 0}$ with $\|u_\mu\|_{L^p(M)} = 1$ and 
$u_\mu \rightharpoonup 0$ in $H^1(M)$, and to show that $\mathcal{S}_\gamma(u_\mu) < 
c_{\gamma}^{\rm bub}(M,g)$ for $0 < \mu \ll 1$. Since $c_{\gamma}(M,g) \leqslant 
\mathcal{S}_\gamma(u_\mu)$, this yields the desired strict inequality.

The construction follows the classical approach of Aubin \cite{Aub}, using truncated Aubin--Talenti 
bubbles in conformal normal coordinates (see Lee and Parker \cite{LP}).

\subsection{Aubin's test functions}
We recall the classical test function construction of Aubin \cite{Aub} in conformal normal coordinates, 
which we then use to evaluate the stability quotient. To do so, we introduce some terminology.
Let $x_0 \in M$ be a point where the Weyl tensor does not vanish, $W_g(x_0) \neq 0$. Let 
$\tilde{g} = \Lambda^{4/(n-2)} g$ be a conformal normal metric and let $\{ y^i\}$ be normal 
coordinates with respect to $\tilde{g}$ centered at $x_0$, such that 
$\det(\tilde{g}_{ij})(y) = 1 + \mathcal{O}(|y|^N)$ for some $N >5$. (Such a conformal metric 
always exists by Theorem 5.1 
of \cite{LP}.) Let $\chi \in \mathcal{C}^\infty_c(\R^n)$ be a smooth cutoff function with 
$\chi \equiv 1$ near the origin, and define the family $(U_\mu)_{\mu > 0}$ in $\mathcal{C}^\infty(\R^n)$ by
\begin{equation}
\label{eq:talenti-aubin}
  U_\mu(y) = \alpha_n \left(\frac{\mu}{\mu^2 + |y|^2}\right)^{\frac{n-2}{2}} \quad {\rm with} 
  \quad \alpha_n = [n(n-2)]^{\frac{n-2}{4}},
\end{equation}
the standard Aubin--Talenti bubble normalized so that it solves
\[
-\Delta U_\mu = U_\mu^{p-1} \quad {\rm in} \quad  \R^n.
\]
Next, we set the family $(\hat{u}_\mu)_{\mu>0}$ as
\[
\hat{u}_\mu(x) = \Lambda(x)^{-1} \chi(y) U_\mu(y) \quad {\rm with} \quad x = \exp_{x_0}^{\tilde{g}}(y),
\]
and consider the normalization by $(u_\mu)_{\mu>0}$ with
\begin{equation}\label{eq:bubbles_family}
u_\mu := \frac{\hat{u}_\mu}{\|\hat{u}_\mu\|_{L^p(M)}},
\end{equation}
so that $\|u_\mu\|_{L^p(M)} = 1$ for all $\mu>0$.

The classical computation of Aubin \cite{Aub} (see also the expansions at the 
bottom of page 58 of Lee and Parker \cite{LP}) produces the following expansion of the Yamabe 
quotient. Since we use some of the same techniques to 
estimate the functional $\mathsf{m}$ in \eqref{eq:momentum}, we include a sketch of the
proof in \S\ref{sec:appendix-energy}. 

\begin{propositionletter}[\cite{Aub}, \cite{LP}]
\label{prop:aubin-energy}
Let $(M, g)$ be a closed $n$-dimensional Riemannian manifold with $n \geqslant 6$. If $(M,g)$ is not locally conformally flat, then the conformal energy of the normalized Aubin family $(u_\mu)_{\mu>0}$ defined in \eqref{eq:bubbles_family} satisfies
\[
\mathcal{E}[u_\mu] = Y(\Ss^n,[g_\circ]) + a_\mu,
\]
where
\[
a_\mu =
\begin{cases}
-C_n |W_g(x_0)|^2 \mu^4 + \mathrm{o}(\mu^4), & n \geqslant 7, \\[4pt]
-C_6 |W_g(x_0)|^2 \mu^4 \log(1/\mu) + \mathcal{O}(\mu^4), & n = 6,
\end{cases}
\]
for explicit constants $C_n > 0$ depending only on $n$.
\end{propositionletter}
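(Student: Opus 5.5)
The plan follows the classical Aubin computation in conformal normal coordinates, as in Lee--Parker. The key steps are to use conformal covariance of $L_g$, expand the metric in normal coordinates, and compute separately the gradient and $L^p$ parts of the quotient for the truncated bubble.

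\textbf{Step 1: reduction to $\tilde g$.} Conformal covariance gives $L_{\tilde g}(\Lambda^{-1}\phi)=\Lambda^{-(p-1)}L_g\phi$ and $\ud V_{\tilde g}=\Lambda^{p}\ud V_g$, so both $\mathcal E$ and $\|\cdot\|_{L^p}$ are invariant under $\phi\mapsto\Lambda^{-1}\phi$ combined with $g\mapsto\tilde g$. Hence $\mathcal{E}_g[\hat u_\mu]=\mathcal{E}_{\tilde g}[\chi U_\mu]$ and $\|\hat u_\mu\|_{L^p(M,g)}=\|\chi U_\mu\|_{L^p(M,\tilde g)}$. From here on we work with $\tilde g$ in normal coordinates $y$. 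Since $\det\tilde g_{ij}=1+\mathcal O(|y|^N)$, we have $\ud V_{\tilde g}=(1+\mathcal O(|y|^N))\ud y$. In conformal normal coordinates $\tilde R(x_0)=0$, $\nabla\tilde R(x_0)=0$, and $\Delta \tilde R(x_0)=-\tfrac16|W(x_0)|^2$ (Lee--Parker Thm.\ 5.1).

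\textbf{Step 2: expansion of the gradient and curvature terms.} Since $U_\mu$ is radial, $|\nabla \phi|_{\tilde g}^2=\tilde g^{rr}(\partial_rU_\mu)^2$. The Gauss lemma gives $\tilde g^{ij}y_iy_j=|y|^2$, so $|\nabla U_\mu|_{\tilde g}=|\nabla U_\mu|_{\mathrm{eucl}}$ exactly inside the region where $\chi\equiv1$. Therefore
\[
\int|\nabla(\chi U_\mu)|^2_{\tilde g}\,\ud V_{\tilde g}=\int_{\R^n}|\nabla U_\mu|^2\ud y+\mathcal O(\mu^{n-2}).
\]
The cutoff error is of order $\mu^{n-2}$ because $U_\mu\sim\mu^{(n-2)/2}|y|^{2-n}$ away from the origin. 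For the curvature term we Taylor expand $\tilde R$ to second order and average over spheres, which gives $\tilde R(y)=\tfrac{1}{2n}\Delta\tilde R(x_0)|y|^2+\dots$ in the angular average. This yields
\[
c_n\int\tilde R\,U_\mu^2=-c_n\frac{|W(x_0)|^2}{12n}\int_{B_\delta}|y|^2U_\mu^2\,\ud y+\dots,
\]
and after scaling $y=\mu z$ the leading integral equals $\mu^4\int|z|^2U_1^2\,\ud z$. This integral converges exactly when $n>6$, since $|z|^2U_1^2\sim|z|^{6-2n}$ is integrable at infinity iff $2n-6>n$. For $n=6$ it diverges logarithmically, giving $\mu^4\log(1/\mu)$. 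The higher Taylor terms contribute $\mathcal O(\mu^5)$ (or $\mu^{n-2}$), which is $\mathrm{o}(\mu^4)$ when $n\geqslant7$ and $\mathcal O(\mu^4)$ when $n=6$.

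\textbf{Step 3: the $L^p$ norm and the quotient.} We have $\int(\chi U_\mu)^p\ud V_{\tilde g}=\int_{\R^n}U_1^p+\mathcal O(\mu^n)$, which is negligible. Dividing, and using that $U_1$ attains the Sobolev constant,
\[
\mathcal Q(\hat u_\mu)=Y_\circ-C_n|W(x_0)|^2\mu^4+\mathrm{o}(\mu^4)
\]
for $n\geqslant7$, with the $\log$ version for $n=6$. Here $C_n$ is an explicit positive multiple of $\int|z|^2U_1^2\big/\|U_1\|_{L^p}^2$. Since $\mathcal E[u_\mu]=\mathcal Q(\hat u_\mu)$ by the normalization, this is the claim.

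\textbf{Main obstacle.} The main difficulty is the second-order bookkeeping in Step 2: checking that no other $\mu^4$ contributions arise, namely from the metric in the gradient term and from the volume form. Conformal normal coordinates together with the Gauss lemma eliminate both. One must also confirm the sign and value of $\Delta\tilde R(x_0)$, which comes from the identity $\Delta \tilde R=-\tfrac16|W|^2$. For that identity I will rely on Lee--Parker rather than rederive it.
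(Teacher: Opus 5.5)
Your proposal is correct and has the same skeleton as the paper's argument:
\begin{itemize}
\item conformal covariance to pass to $\tilde g$, and conformal normal coordinates;
\item a second-order Taylor expansion of $R_{\tilde g}$, averaged over spheres, using $\Delta R_{\tilde g}(x_0)=-\tfrac16|W_g(x_0)|^2$;
\item convergence of $\int|z|^2U_1^2\ud z$ exactly when $n\geqslant 7$, with a logarithm at $n=6$;
\item an $L^p$ norm that is unchanged to high order.
\end{itemize}

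The genuine difference is how you handle the metric in the Dirichlet term. The paper expands $\tilde g^{ij}=\delta^{ij}+\tfrac13R_{ikj\ell}(x_0)y^ky^\ell+\mathcal O(|y|^3)$. It then shows that the quadratic correction $I_\mu$ vanishes, using the fourth-moment identity on $\Ss^{n-1}$ and contracting each piece of the curvature decomposition. You instead use the Gauss lemma, $\tilde g^{ij}y_j=y^i$, which gives $|\nabla U_\mu|_{\tilde g}=|\nabla U_\mu|$ exactly for radial functions.

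Your route is shorter and stronger, for two reasons. First, the paper's $I_\mu=0$ is just the quadratic-order instance of the same fact: already $R_{ikj\ell}y^iy^k=0$, so the integrand of $I_\mu$ vanishes pointwise. Second, the Gauss lemma also disposes of the $\mathcal O(|y|^3)$ remainder of $\tilde g^{ij}$, which the paper's expansion does not control. A size bound on that remainder gives only $\mu^3\int_{B_{R/\mu}}|z|^3|\nabla U_1|^2\ud z=\mathcal O(\mu^3)$ when $n\geqslant 6$. That is larger than the $\mu^4$ effect being computed, so the paper's stated $\mathcal O(\mu^6)$ error there really rests on the radial structure you exploit. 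The only thing the paper's computation offers in exchange is an explicit display of why no Ricci or Weyl component enters through the gradient term.

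One bookkeeping slip, inherited from the paper's definition of $\hat u_\mu$. Your covariance identity gives $\mathcal E_g[\phi]=\mathcal E_{\tilde g}[\Lambda^{-1}\phi]$ and $\|\phi\|_{L^p(M,g)}=\|\Lambda^{-1}\phi\|_{L^p(M,\tilde g)}$. So with $\hat u_\mu=\Lambda^{-1}\chi U_\mu$ you would be evaluating the quotient of $\Lambda^{-2}\chi U_\mu$ in $\tilde g$, not of $\chi U_\mu$. Your \emph{hence} requires $\hat u_\mu=\Lambda\,\chi U_\mu$ (equivalently $\tilde g=\Lambda^{-4/(n-2)}g$), which is clearly the intended test function.
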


\begin{remark}
   We stress that the negativity of $a_\mu$ is the key ingredient in the proof of
   Theorem~\ref{theorem c < c-bubble} below. Indeed, the stability
   expansion \eqref{eq:S-expansion-final} involves the coefficient
   $Y_M/Y_\circ^2 > 0$ in front of $a_\mu$, so that $a_\mu < 0$ pushes
   $\mathcal{S}_\gamma(u_\mu)$ below $c_{\gamma}^{\rm bub}(M,g)$. For $n \geqslant 7$
   the term $m_\mu = \mathcal{O}(\mu^{n-2})$ is of strictly higher order than $|a_\mu|
   = \mathcal{O}(\mu^4)$, and for $n = 6$ the factor $\log(1/\mu)$ in $|a_\mu|$ dominates
   $m_\mu = \mathcal{O}(\mu^4)$, so in both cases the energy correction alone determines the sign of
   $\mathcal{S}_\gamma(u_\mu) - c_{\gamma}^{\rm bub}(M,g)$.
 
\end{remark}

\subsection{The stability mass}
We now estimate $\mathsf{m}(u_\mu)$ defined in Lemma~\ref{lemma DEFFL distance}. 
Since $u_\mu$ concentrates at $x_0$ and $\|u_\mu\|_{L^p(M)} = 1$, we have $u_\mu \rightharpoonup 0$ 
in $H^1(M)$, whence $\mathsf{m}(u_\mu) \to 0$ by Lemma~\ref{lemma m to 0 weak conv}. The precise 
rate is as follows.

\begin{lemma}
\label{lemma:moment-rate}
Let $(M, g)$ be a closed $n$-dimensional Riemannian manifold with $n \geqslant 3$. Then $\mathsf{m} (u_\mu)$ 
satisfies 
\[
\mathsf{m}(u_\mu) = \mathcal{O}(\mu^{n-2}) \quad {\rm as} \quad \mu \to 0 , 
\]
where $(u_\mu)_{\mu > 0}$ is the normalized Aubin family defined in \eqref{eq:bubbles_family}.
\end{lemma}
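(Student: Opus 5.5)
By definition, $\mathsf{m}(u_\mu) = \sup_{h \in \mathscr O_1} \bigl(\int_M h^{p-1} u_\mu \ud V_g\bigr)^2$. The plan is to bound $\int_M h^{p-1} u_\mu \ud V_g$ by $C\mu^{(n-2)/2}$ with $C$ independent of $h \in \mathscr O_1$, and then square.

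\emph{Uniform bound on the optimizers.} By Lemma~\ref{lemma compactness}, $\mathscr O_1$ is compact in $\mathcal C^2(M)$. Hence
\[
K := \sup_{h\in\mathscr O_1}\|h\|_{L^\infty(M)} < \infty .
\]
This gives, for every $h \in \mathscr O_1$,
\[
0 \leqslant \int_M h^{p-1} u_\mu \ud V_g \leqslant K^{p-1}\|u_\mu\|_{L^1(M)} = K^{p-1}\,\frac{\|\hat u_\mu\|_{L^1(M)}}{\|\hat u_\mu\|_{L^p(M)}} .
\]
So the whole problem reduces to estimating the $L^1$ and $L^p$ norms of $\hat u_\mu$.

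\emph{The two norms.} We work in the $\tilde g$-normal coordinates on $\supp\chi$. There $\Lambda$ is smooth and bounded above and below, and $\ud V_g = \Lambda^{-2n/(n-2)}\ud V_{\tilde g}$ with $\det \tilde g_{ij} = 1+\mathcal O(|y|^N)$. Consequently both norms are comparable, up to constants, to the corresponding Euclidean integrals of $\chi U_\mu$.

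For the $L^p$ norm, scale invariance gives $\|U_\mu\|_{L^p(\R^n)} = \|U_1\|_{L^p(\R^n)}$. The cutoff only removes a tail that is $o(1)$ as $\mu\to0$, so $\|\hat u_\mu\|_{L^p(M)} \geqslant c>0$ for all small $\mu$.

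For the $L^1$ norm, use the pointwise bound $U_\mu(y) \leqslant \alpha_n \mu^{(n-2)/2}|y|^{2-n}$. Since $|y|^{2-n}$ is integrable on the bounded set $\supp\chi$, we get
\[
\|\hat u_\mu\|_{L^1(M)} \leqslant C\mu^{(n-2)/2}\int_{\supp\chi}|y|^{2-n}\ud y = \mathcal O\bigl(\mu^{(n-2)/2}\bigr).
\]
This holds in every dimension $n\geqslant3$, without any case distinction.

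\emph{Conclusion.} Combining the three estimates, $\int_M h^{p-1}u_\mu \ud V_g \leqslant C\mu^{(n-2)/2}$ uniformly in $h\in\mathscr O_1$. Squaring and taking the supremum over $h$ gives $\mathsf m(u_\mu)=\mathcal O(\mu^{n-2})$.

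There is no genuine obstacle here. The only point needing care is the uniform $L^\infty$ bound on $\mathscr O_1$, which is exactly where the hypothesis $Y(M,[g])<Y(\Ss^n,[g_\circ])$ enters through Lemma~\ref{lemma compactness}.

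The same computation also identifies the sharp constant, which is the quantity appearing in $\mathsf m_\infty(x_0)$. From $\mu^{-(n-2)/2}U_\mu(y)\to\alpha_n|y|^{2-n}$ and the fact that $\Lambda^{-1}$ times the cutoff Euclidean kernel approximates $(n-2)\omega_{n-1}G_{x_0}$ near $x_0$, dominated convergence gives the asymptotics
\[
\mu^{-(n-2)/2}\int_M h^{p-1}u_\mu\ud V_g \to \alpha_n(n-2)\omega_{n-1}\|U_1\|_{L^p(\R^n)}^{-1}\int_M h^{p-1}G_{x_0}\ud V_g,
\]
up to a smooth correction. This refinement is not needed for the $\mathcal O(\mu^{n-2})$ statement itself.
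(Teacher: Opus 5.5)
Your argument is correct and is essentially the paper's proof: a uniform $L^\infty$ bound on $\mathscr O_1$ from Lemma~\ref{lemma compactness} reduces everything to $\|u_\mu\|_{L^1(M)}=\mathcal{O}(\mu^{(n-2)/2})$. You obtain that bound from the pointwise estimate $U_\mu\leqslant\alpha_n\mu^{(n-2)/2}|y|^{2-n}$ rather than the paper's rescaling $y=\mu z$, and you also make explicit the lower bound on $\|\hat u_\mu\|_{L^p(M)}$ that the paper leaves implicit.

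Your closing ``sharp constant'' remark is not accurate for the Aubin family, though. The limit of $\mu^{-(n-2)/2}\int_M h^{p-1}u_\mu \ud V_g$ is governed by the compactly supported kernel $\Lambda^{-1}\chi|y|^{2-n}$, not by $(n-2)\omega_{n-1}G_{x_0}$. These two kernels agree only to leading order at $x_0$ and differ by a fixed integrable function, which changes the constant at the same order. So $\mathsf{m}_\infty(x_0)$ arises only from Schoen's family $w_\mu$, but this plays no role in the lemma.
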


\begin{proof}
Let $h_\mu \in \mathscr{O}_1$ achieve $\mathsf{m}(u_\mu)$. By the compactness of $\mathscr{O}_1$, $h_\mu \leqslant C$ on $M$, uniformly in $\mu$. 

A direct computation in conformal normal coordinates yields
\[
\int_M h_\mu^{p-1} u_\mu \ud V_g \leqslant C \int_M u_\mu \ud V_g. 
\]
The $L^1$ norm of $u_\mu$ on $M$ satisfies $\int_M u_\mu \ud V_g = \mathcal{O}(\mu^{(n-2)/2})$, since in conformal normal coordinates,
\begin{align*}
\int_{B_R(0)} U_\mu(y) \ud y &= \alpha_n \mu^{(n-2)/2 + 2}|\Ss^{n-1}| \int_0^{R/\mu} \frac{s^{n-1}}{(1 + s^2)^{(n-2)/2}} \ud s= \mathcal{O}\!\left(\mu^{(n-2)/2}\right),
\end{align*}
where we used $\int_0^{R/\mu} s^{n-1}(1+s^2)^{-(n-2)/2} \ud s = \mathcal{O}(\mu^{-2})$. It follows that
\[
\mathsf{m}(u_\mu) = \left(\int_M h_\mu^{p-1} u_\mu \ud V_g\right)^2 = \mathcal{O}(\mu^{n-2}).  \qedhere
\]
\end{proof}
For later use, we observe that for $0 < \mu \ll 1$ the $\mathsf{m}$-term satisfies the bound 
\[\mathsf{m}(u_\mu) = \mathcal{O}(\mu^{n-2}) = \mathrm{o}(|a_\mu|) \quad
{\rm as} \quad \mu \searrow 0,\] whenever the dimension $n$ is at least $6$ and the Weyl tensor of $g$ does not 
vanish at $x_0$. 

\subsection{Completion of the proof}

The proof follows from the energy and $\mathsf{m}$-estimates established above.

\begin{proof}[Proof of Theorem~\ref{theorem c < c-bubble}]
We use the test functions $u_\mu\in \mathcal{C}^\infty(M)$ constructed above. We set
\[
E_\mu := \mathcal{E}[u_\mu] = Y_\circ + a_\mu \quad {\rm and} \quad m_\mu := \mathsf{m}(u_\mu),
\]

\medskip
\noindent{\bf Claim~1} {\rm (Stability quotient expansion):} {\it $\mathcal{S}_\gamma(u_\mu) = c_\gamma^{\rm bub} + \frac{Y_M}{Y_\circ^2}\, a_\mu + \frac{\gamma\, Y_M(Y_\circ - Y_M)}{2\, Y_\circ^2}\, m_\mu + \mathrm{o}(a_\mu) + \mathrm{o}(m_\mu)$.}

\smallskip
\noindent{\it Proof.} By the definition of $\mathcal{S}_\gamma$ and the identity $\dist(u_\mu, \mathscr{O})^2 = E_\mu - Y_M\, m_\mu$ from Lemma~\ref{lemma DEFFL distance}, one has
\begin{equation}\label{eq:S-expansion}
\mathcal{S}_\gamma(u_\mu) = \frac{E_\mu^{(\gamma-2)/2}(E_\mu - Y_M)}{(E_\mu - Y_M\, m_\mu)^{\gamma/2}}
= \left(1 - \frac{Y_M}{E_\mu}\right)\left(1 - \frac{Y_M\, m_\mu}{E_\mu}\right)^{-\gamma/2}.
\end{equation}
Since $a_\mu, m_\mu \to 0$, we expand each factor around $E_\mu = Y_\circ$. For the first factor, one has
\[
\frac{1}{E_\mu} = \frac{1}{Y_\circ + a_\mu} = \frac{1}{Y_\circ}\left(1 - \frac{a_\mu}{Y_\circ} + \mathcal{O}(a_\mu^2)\right),
\]
whence
\[
1 - \frac{Y_M}{E_\mu} = 1 - \frac{Y_M}{Y_\circ} + \frac{Y_M}{Y_\circ^2}\, a_\mu + \mathcal{O}(a_\mu^2) = c_\gamma^{\rm bub} + \frac{Y_M}{Y_\circ^2}\, a_\mu + \mathcal{O}(a_\mu^2),
\]
where we used $c_\gamma^{\rm bub} = 1 - \tfrac{Y_M}{Y_\circ}$ from Corollary~\ref{cor:bubble-formula}. For the second factor, since 
\[
\frac{Y_M m_\mu}{E_\mu} = \frac{Y_M m_\mu}{Y_\circ} + \mathcal{O}(a_\mu m_\mu),
\]
the binomial expansion yields
\[
\left(1 - \frac{Y_M\, m_\mu}{E_\mu}\right)^{-\gamma/2} = 1 + \frac{\gamma\, Y_M}{2\, Y_\circ}\, m_\mu + \mathcal{O}(m_\mu^2).
\]
Multiplying these two factors, one obtains
\begin{equation}\label{eq:S-expansion-final}
\begin{aligned}
\mathcal{S}_\gamma(u_\mu) &= c_\gamma^{\rm bub} + \frac{Y_M}{Y_\circ^2}\, a_\mu + \frac{\gamma\, Y_M\, c_\gamma^{\rm bub}}{2\, Y_\circ}\, m_\mu + \mathrm{o}(a_\mu) + \mathrm{o}(m_\mu)\\
&= c_\gamma^{\rm bub} + \frac{Y_M}{Y_\circ^2}\, a_\mu + \gamma\, \frac{Y_M(Y_\circ - Y_M)}{2\, Y_\circ^2}\, m_\mu + \mathrm{o}(a_\mu) + \mathrm{o}(m_\mu), 
\end{aligned}
\end{equation}
where we used that $c_\gamma^{\rm bub} = \tfrac{Y_\circ - Y_M}{Y_\circ}$. This proves Claim~1.

\medskip
\noindent{\bf Claim~2} {\rm (Strict inequality):} {\it $\mathcal{S}_\gamma(u_\mu) < c_\gamma^{\rm bub}$ for $0 < \mu \ll 1$.}

\smallskip
\noindent{\it Proof.} We divide the proof into two cases as follows.

\noindent \textit{Case $n \geqslant 7$.}
By Proposition~\ref{prop:aubin-energy}, one has $a_\mu = -C_n |W_g(x_0)|^2 \mu^4 + \mathrm{o}(\mu^4)$. By Lemma~\ref{lemma:moment-rate}, $m_\mu = \mathcal{O}(\mu^{n-2})$. Since $n - 2 >4$, it follows that $m_\mu = \mathrm{o}(\mu^4) = \mathrm{o}(|a_\mu|)$, whence the energy correction dominates in \eqref{eq:S-expansion-final} and one obtains
\[
\mathcal{S}_\gamma(u_\mu) = c_\gamma^{\rm bub} - \frac{Y_M\, C_n}{Y_\circ^2}\, |W_g(x_0)|^2\, \mu^4 + \mathrm{o}(\mu^4) < c_\gamma^{\rm bub}
\]
for $0 < \mu \ll 1$.

\smallskip
\noindent\textit{Case $n = 6$.}
By Proposition~\ref{prop:aubin-energy}, one has $a_\mu = -C_6 |W_g(x_0)|^2 \mu^4 \log(1/\mu) + \mathcal{O}(\mu^4)$. By Lemma~\ref{lemma:moment-rate}, $m_\mu = \mathcal{O}(\mu^4)$. Since $\mu^4 \log(1/\mu) \gg \mu^4$ as $\mu \to 0$, the energy correction still dominates, whence
\[
\mathcal{S}_\gamma(u_\mu) = c_\gamma^{\rm bub} - \frac{Y_M\, C_6}{Y_\circ^2}\, |W_g(x_0)|^2\, \mu^4 \log(1/\mu) + \mathcal{O}(\mu^4) < c_\gamma^{\rm bub}
\]
for $0 < \mu \ll 1$.
In both cases, $c_\gamma \leqslant \mathcal{S}_\gamma(u_\mu) < c_\gamma^{\rm bub}$, and Claim~2 is proved.

\medskip
Combining Claims~1 and~2 completes the proof.
\end{proof}

\begin{remark}
\label{rmk:lcf-and-low-dim}
The assumption that $(M,g)$ is not locally conformally flat is used to guarantee $W_g(x_0) \neq 0$ at some point, which produces the negative energy correction $a_\mu < 0$. For locally conformally flat manifolds one has $W_g \equiv 0$, so this mechanism is unavailable. For manifolds of dimension $3 \leqslant n \leqslant 5$, regardless of whether they are locally conformally flat, the Aubin energy correction is $a_\mu = \mathcal{O}(\mu^4)$ while the $\mathsf{m}$-correction is $m_\mu = \mathcal{O}(\mu^{n-2})$ with $n - 2 <4$, so the $\mathsf{m}$-term dominates and the Weyl tensor approach fails even when $W_g \neq 0$. In both situations one must use Schoen's Green function modified test functions, for which the energy and $\mathsf{m}$-corrections are of the same order $\mu^{n-2}$, and the sign of the leading coefficient depends on the mass dominance condition \eqref{mass dominance}. In this fashion, Theorems~\ref{theorem c < c-bubble} and~\ref{theorem Schoen} together cover all closed manifolds with $0 < Y_M < Y(\Ss^n, [g_\circ])$. We carry out the Schoen analysis in \S\ref{sec:proof-schoen}.
\end{remark}

\begin{proof}[Proof of Corollary~\ref{corollary existence stab opt gen M}]
Since $(M,g)$ is not locally conformally flat and $n \geqslant 6$, Theorem~\ref{theorem c < c-bubble} yields $c_\gamma < c_\gamma^{\rm bub}$. Together with \eqref{strict ineq cor}, both strict inequalities in \eqref{strict ineq thm} hold, and the conclusion follows from Theorem~\ref{theorem existence stab opt gen M}.
\end{proof}

\section{Proof of Theorem~\ref{theorem Schoen}}\label{sec:proof-schoen}

We construct Schoen's Green function modified test functions and compute the corresponding stability quotient.

\subsection{Schoen's test functions}
Let $x_0 \in M$ be a point at which the mass dominance condition \eqref{mass dominance} holds. 
Since $Y_M > 0$, the Green function $G_{x_0}$ of $L_g$ is strictly positive on 
$M \setminus \{x_0\}$ and admits the expansion \eqref{Green function expansion}. We work in 
conformal normal coordinates $y$ centered at $x_0$ and set
\[
\beta_\mu := \alpha_n\, (n-2)\, \omega_{n-1}\, \mu^{(n-2)/2},
\]
so that 
\[
U_\mu(y) \sim \frac{\beta_\mu}{(n-2)\omega_{n-1}}\, |y|^{2-n} \quad {\rm as} \quad 
\frac{|y|}{\mu} \to +\infty.
\]
Let $r_\mu>0$ satisfy $\mu \ll r_\mu \ll 1$, for instance $r_\mu = \mu^{1/2}$, and let $\eta_\mu$ 
be a smooth cutoff with $\eta_\mu \equiv 1$ on $B_{r_\mu}(x_0)$ and $\eta_\mu \equiv 0$ 
outside $B_{2r_\mu}(x_0)$. We define
\[
\hat{w}_\mu(x) :=
\begin{cases}
U_\mu(y), & |y| \leqslant r_\mu, \\[3pt]
\eta_\mu(y)\, U_\mu(y) + (1 - \eta_\mu(y))\, \beta_\mu\, G_{x_0}(x), & r_\mu
\leqslant |y| \leqslant 2 r_\mu, \\[3pt]
\beta_\mu\, G_{x_0}(x), & |y| \geqslant 2 r_\mu,
\end{cases}
\]
and consider the normalization $(w_\mu)_{\mu>0}$ with
\begin{equation}\label{eq:schoen_family}
 w_\mu := \frac{\hat{w}_\mu}{\|\hat{w}_\mu\|_{L^p(M)}},
\end{equation}
so that $\|w_\mu\|_{L^p(M)} = 1$ for all $\mu > 0$. By construction,
$w_\mu \rightharpoonup 0$ in $H^1(M)$ as $\mu \to 0$.

The following classical estimate is equation (2.8) of Schoen \cite{Sch1} (see 
also \cite{Schoen1989} or the combination of Proposition 7.1 and Lemma 9.7 of \cite{LP}). For 
the sake of completeness, we added a sketch of the proof to Appendix \S\ref{sec:appendix-energy}.

Once again we introduce some notation.
Let $x_0 \in M$ and $G_{x_0}$ be the Green function of $L_g$ with pole at $x_0$ and mass 
$\mathfrak{m}_{x_0}$ as in \eqref{Green function expansion}, and let $(w_\mu)_{\mu>0}$ 
be the normalized Schoen test function defined above.

\begin{propositionletter}[{\cite{Sch1}}, {\cite{Schoen1989}}]
\label{prop:schoen-energy}
Let $(M, g)$ be a closed $n$-dimensional Riemannian manifold with $n \geqslant 3$ 
and $0 < Y(M,[g]) < Y(\Ss^n,[g_\circ])$.
Then, the conformal energy of the normalized Schoen family $(w_\mu)_{\mu>0}$ defined 
in \eqref{eq:schoen_family} satisfies
\[
\mathcal{E}[w_\mu] = Y(\Ss^n,[g_\circ]) + a_\mu,
\]
where
\begin{equation}
\label{eq:schoen-energy}
a_\mu = -\beta_n\, \mathfrak{m}_{x_0}\, \mu^{n-2} + \mathrm{o}(\mu^{n-2})
\end{equation}
with
\begin{equation}
\label{eq:beta-def}
\beta_n :=\frac{2^{n-2}\,\alpha_n^2\,(n-2)^2\,\omega_{n-1}^2}{|\Ss^n|^{\frac{n-2}{n}}} > 0.
\end{equation}
\end{propositionletter}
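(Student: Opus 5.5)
I will prove the expansion of $\mathcal E[w_\mu]$ the way Schoen does: compute $\mathcal E[\hat w_\mu]$ and $\|\hat w_\mu\|_{L^p(M)}^p$ separately to order $\mu^{n-2}$, then take the quotient $\mathcal E[w_\mu]=\mathcal E[\hat w_\mu]/\|\hat w_\mu\|_{L^p}^2$. I work in conformal normal coordinates, replacing $g$ by the conformal normal metric $\tilde g=\Lambda^{4/(n-2)}g$. This is legitimate because $\mathcal E$ and the $L^p$ norm are conformally covariant, and $G_{x_0}$ transforms accordingly. Since $\Lambda(x_0)=1$ and $\nabla\Lambda(x_0)=0$, the mass $\mathfrak m_{x_0}$ is unaffected to leading order, as the expansion \eqref{Green function expansion} is already stated in these coordinates. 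In the LCF case the metric can be taken flat near $x_0$. In dimensions $3\leqslant n\leqslant 5$ one has $\det\tilde g_{ij}=1+\mathcal O(|y|^N)$ and $R_{\tilde g}=\mathcal O(|y|^2)$. Either way, all curvature errors inside $B_{2r_\mu}$ are $\mathcal O(\mu^{4}r_\mu^{\,6-n})$ or better, hence $\mathrm o(\mu^{n-2})$ for $n\leqslant5$ once $r_\mu\to0$.

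\textbf{Energy.} I split $M$ into three regions. On $\{|y|\geqslant 2r_\mu\}$, since $L_gG_{x_0}=0$ there, integration by parts gives
\[
\beta_\mu^2\int_{|y|\geqslant2r_\mu}\big(|\nabla G|^2+c_nRG^2\big)=-\beta_\mu^2\oint_{|y|=2r_\mu}G\,\partial_rG .
\]
Inserting $G=\frac{r^{2-n}}{(n-2)\omega_{n-1}}+\mathfrak m_{x_0}+\mathcal O(r)$, this boundary term produces $\beta_\mu^2\big[\frac{(2r_\mu)^{2-n}}{(n-2)\omega_{n-1}}+\mathfrak m_{x_0}\big]$ up to $\mathrm o(\mu^{n-2})$. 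On $\{|y|\leqslant r_\mu\}$ I use $-\Delta U_\mu=U_\mu^{p-1}$ and integrate by parts, obtaining $\int_{B_{r_\mu}}U_\mu^p+\oint U_\mu\partial_rU_\mu$. The boundary term equals $-\beta_\mu^2\frac{r_\mu^{2-n}}{(n-2)\omega_{n-1}}(1+\mathcal O(\mu^2/r_\mu^2))$. The annulus is where the two ends are matched. There I write $\hat w_\mu=\beta_\mu\Gamma+\beta_\mu\,\theta$, with $\Gamma$ the fundamental solution and $\theta=(1-\eta_\mu)(\mathfrak m_{x_0}+\mathcal O(r))+\eta_\mu\,\mathcal O(\mu^2r^{-n})$. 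I then check that the boundary terms telescope: the singular pieces $r_\mu^{2-n}$ and $(2r_\mu)^{2-n}$ cancel, and the mass survives with a \emph{negative} sign. The sign comes from the cross term $2\beta_\mu^2\int\nabla\Gamma\cdot\nabla\theta$ combined with the jump of $\theta$ from $0$ to $\mathfrak m_{x_0}$. The remaining annular errors are $\mathcal O(\beta_\mu^2 r_\mu^{\,\cdot})+\mathcal O(\mu^{n}r_\mu^{-n})=\mathrm o(\mu^{n-2})$. The net result should be
\[
\mathcal E[\hat w_\mu]=\int_{\R^n}U_\mu^p-\tfrac12\text{-type coefficient}\cdot\beta_\mu^2\mathfrak m_{x_0}+\mathrm o(\mu^{n-2}).
\]

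\textbf{$L^p$ norm and quotient.} I expect $\int\hat w_\mu^p=\int_{\R^n}U_\mu^p+c\,\beta_\mu\mathfrak m_{x_0}\int_{B_{r_\mu}}U_\mu^{p-1}+\mathrm o(\mu^{n-2})$. The first correction arises because, outside $B_{r_\mu}$, $\hat w_\mu\approx\beta_\mu(\Gamma+\mathfrak m_{x_0})$ while $U_\mu\approx\beta_\mu\Gamma$. More cleanly, I compare with the harmonic extension, noting that $\int U_\mu^{p-1}\cdot\beta_\mu\mathfrak m_{x_0}\asymp\beta_\mu^2\mathfrak m_{x_0}$ since $\int U_\mu^{p-1}=\beta_\mu$. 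Writing $\mathcal E[\hat w_\mu]=K+A\beta_\mu^2\mathfrak m_{x_0}$ and $\|\hat w_\mu\|_p^p=K+B\beta_\mu^2\mathfrak m_{x_0}$, with $K=\int U^p=Y_\circ^{n/2}$, I get
\[
\mathcal E[w_\mu]=Y_\circ+K^{(2-n)/n}\Big(A-\tfrac{2}{p}B\Big)\beta_\mu^2\mathfrak m_{x_0}+\mathrm o(\mu^{n-2}).
\]
Since $\beta_\mu^2=\alpha_n^2(n-2)^2\omega_{n-1}^2\mu^{n-2}$, this already matches the structure of \eqref{eq:beta-def}, provided $K^{(2-n)/n}$ is combined with the normalization $|\Ss^n|^{(n-2)/n}$ and the factor $2^{n-2}$.

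\textbf{Main obstacle.} The hard step is the bookkeeping of the exact coefficient $\beta_n$, including the factor $2^{n-2}$ and the sign $A-\frac2pB<0$. The factor $2^{n-2}$ depends on normalization conventions, the $|\Ss^n|$ versus $\R^n$ scaling and $\alpha_n$, so I would fix it by checking against Lee--Parker, Lemma 9.7, rather than rederiving it blindly. The annulus cross terms are the delicate part for the sign. I would handle them by writing $\hat w_\mu-\beta_\mu G_{x_0}$ as a function supported in $B_{2r_\mu}$ and expanding $\mathcal E$ bilinearly, using $L_gG_{x_0}=\delta_{x_0}$. This leaves $\beta_\mu^2\mathcal E[G\text{-part}]$ replaced by the pairing $\beta_\mu\int(\hat w_\mu-\beta_\mu G)\,\delta_{x_0}$-type terms, and makes the mass appear through $G-\Gamma\to\mathfrak m_{x_0}$ at $x_0$.
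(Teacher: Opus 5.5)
The route is the same as the paper's (and Schoen's): split into core, annulus and outer region, use $-\Delta U_\mu=U_\mu^{p-1}$ in the core and $L_gG_{x_0}=0$ outside, let the singular boundary terms telescope, and read off the mass from the annulus. The gap is that you never compute the coefficient, which is the whole content of the statement. You leave a ``$\tfrac12$-type coefficient'' and defer even the sign of $A-\tfrac2pB$ to Lee--Parker.

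Worse, the one concrete ingredient you commit to for the denominator is wrong for this test function. Since $\hat w_\mu=U_\mu$ exactly on $B_{r_\mu}$, the shift by $\beta_\mu\mathfrak m_{x_0}$ lives only on $\{|y|\geqslant r_\mu\}$. There it meets only $\int_{|y|>r_\mu}U_\mu^{p-1}=\mathcal O(\beta_\mu\mu^2r_\mu^{-2})$. Hence $\|\hat w_\mu\|_{L^p}^p$ has no $\mathfrak m_{x_0}$-term at order $\mu^{n-2}$, i.e. $B=0$. The correction $p\beta_\mu^2\mathfrak m_{x_0}$ you have in mind belongs to the other standard test function $\chi U_\mu+\beta_\mu(G_{x_0}-\chi\Gamma)$, which has $A=+1$.

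Your energy bookkeeping, once finished, gives $A=-1$. The outer boundary term contributes $+\beta_\mu^2\mathfrak m_{x_0}$. By Green's formula the cross term is $2\beta_\mu^2\oint_{|y|=2r_\mu}\theta\,\partial_r\Gamma=-2\beta_\mu^2\mathfrak m_{x_0}+\mathrm o(\mu^{n-2})$, the inner-boundary contribution being $\mathcal O(\mu^nr_\mu^{-n})$. Therefore
\[
\mathcal E[\hat w_\mu]=\|U_1\|_{L^p(\R^n)}^p-\beta_\mu^2\,\mathfrak m_{x_0}+\mathrm o(\mu^{n-2}).
\]
Pairing this with a nonzero $B$, as you propose, produces a wrong constant.

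With $A-\tfrac2pB=-1$, your quotient formula gives $a_\mu=-\beta_\mu^2\mathfrak m_{x_0}/\|U_1\|_{L^p(\R^n)}^2+\mathrm o(\mu^{n-2})$. Since $\|U_1\|_{L^p(\R^n)}^2=Y_\circ^{(n-2)/2}=2^{2-n}\alpha_n^2|\Ss^n|^{(n-2)/n}$, the coefficient of $-\mathfrak m_{x_0}\mu^{n-2}$ is $2^{n-2}(n-2)^2\omega_{n-1}^2|\Ss^n|^{-(n-2)/n}$. This differs from the displayed $\beta_n$ by the factor $\alpha_n^2=[n(n-2)]^{(n-2)/2}$.

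So your planned last step cannot close, whether by adjusting conventions or by looking the number up. The displayed constant is what one gets by dropping $\alpha_n$ from $\|U_1\|_{L^p}$. The paper's appendix does not settle this either. It also attributes the annulus coefficient to Schoen without computing it, and divides only by $\|U_1\|_{L^p}^2+\mathrm o(1)$; that last step is harmless here exactly because $B=0$.

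Two technical points also need repair. First, the bilinear expansion around $\beta_\mu G_{x_0}\notin H^1(M)$ must be done distributionally. For instance, use the exact identity $L_g\hat w_\mu=\eta_\mu L_gU_\mu-2\nabla\eta_\mu\cdot\nabla(U_\mu-\beta_\mu G_{x_0})-(U_\mu-\beta_\mu G_{x_0})\Delta_g\eta_\mu$, in which the Dirac masses cancel. Second, your error terms $\mathcal O(\mu^nr_\mu^{-n})$ are $\mathrm o(\mu^{n-2})$ only if $r_\mu\gg\mu^{2/n}$. The curvature error is $\mathcal O(\mu^{n-2}r_\mu^{6-n})$, not $\mathcal O(\mu^4r_\mu^{6-n})$. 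Consequently the sample choice $r_\mu=\mu^{1/2}$ is not admissible in your scheme for $n\geqslant4$; take $r_\mu\to0$ more slowly.
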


\begin{remark}\label{rmk:beta-explicit}
Notice that using 
\[\alpha_n = [n(n-2)]^{\frac{n-2}{4}}, \quad \omega_{n-1} = {2\pi^{\frac{n}{2}}}{\Gamma(\tfrac{n}{2})^{-1}}, \quad {\rm and} \quad |\Ss^n| = {2\pi^{\frac{n+1}{2}}}{\Gamma\!\left(\tfrac{n+1}{2}\right)^{-1}},
\]
the constant $\beta_n=\beta(n)$ is an explicit function of $n$, namely
\begin{equation}
\label{eq:beta-gamma}
\beta(n) = 2^{\frac{n^2-n+2}{n}}\,\pi^{\frac{n^2+n+2}{2n}}\,n^{\frac{n-2}{2}}\,(n-2)^{\frac{n+2}{2}}\,\Gamma\!\bigl(\tfrac{n+1}{2}\bigr)^{\frac{n-2}{n}}{\Gamma\!\bigl(\tfrac{n}{2}\bigr)^{-2}}.
\end{equation}
\end{remark}

\subsection{The mass stability functional}

The same argument as in Lemma~\ref{lemma:moment-rate} yields $\mathsf{m}(w_\mu) = \mathcal{O}(\mu^{n-2})$. Indeed, for any $h_* \in \mathscr{O}_1$, one has

\[
\int_M h_*^{p-1}\, w_\mu \ud V_g = h_*(x_0)^{p-1} \int_M w_\mu \ud V_g + \mathcal{O}\!\left(\int_M |x - x_0|\, w_\mu \ud V_g\right),
\]
and the $L^1$ norm of $w_\mu$ satisfies $\int_M w_\mu \ud V_g = \mathcal{O}(\mu^{(n-2)/2})$, since
\[
w_\mu(x) \lesssim \beta_\mu\, G_{x_0}(x) = \mathcal{O}(\mu^{(n-2)/2}) \quad {\rm on} \quad M \setminus B_{r_\mu}(x_0)
\]
and $\int_{B_\mu} U_\mu \ud y = \mathcal{O}(\mu^{n/2})$. In particular, one has
\[
\mathsf{m}(w_\mu) = \mathcal{O}(\mu^{n-2}) \quad {\rm as} \quad \mu \to 0.
\]
The last estimate allows us to define
\begin{equation}
\label{eq:minfty-def}
\mathsf{m}_\infty(x_0) := \limsup_{\mu \to 0}\, \mu^{-(n-2)}\, \mathsf{m}(w_\mu) \geqslant 0.
\end{equation}

\begin{proposition}
\label{prop:moment-sharp}
Let $(M, g)$ be a closed $n$-dimensional Riemannian manifold with $n \geqslant 3$ and $0 < Y(M,[g]) < Y(\Ss^n,[g_\circ])$. Then, it holds
\begin{equation}
\label{eq:minfty-formula}
\mathsf{m}_\infty(x_0) =\lim_{\mu \to 0} \mu^{-(n-2)}\, \mathsf{m}(w_\mu) = \frac{[\alpha_n\, (n-2)\, \omega_{n-1}]^2}{\|U_1\|_{L^p(\R^n)}^2}\, \sup_{h \in \mathscr{O}_1} \left(\int_M h^{p-1}\, G_{x_0} \ud V_g\right)^2,
\end{equation}
where $U_1 \in \mathcal{C}^\infty(\R^n)$ is the unscaled Aubin--Talenti bubble in \eqref{eq:talenti-aubin}.
\end{proposition}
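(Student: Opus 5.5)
Fix $h \in \mathscr{O}_1$ and study $I_\mu(h) := \int_M h^{p-1} w_\mu \ud V_g$. Since $\mathsf{m}(w_\mu) = \sup_{h\in\mathscr O_1} I_\mu(h)^2$, it suffices to show
\[
\mu^{-(n-2)/2}\, I_\mu(h) \;\longrightarrow\; \frac{\alpha_n (n-2)\omega_{n-1}}{\|U_1\|_{L^p(\R^n)}} \int_M h^{p-1} G_{x_0} \ud V_g
\]
uniformly in $h\in\mathscr O_1$. Taking the supremum then commutes with the limit, and the statement, including the existence of the limit (and not only the $\limsup$ in \eqref{eq:minfty-def}), follows.

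The first step treats the denominator. I would show that $\|\hat w_\mu\|_{L^p(M)} \to \|U_1\|_{L^p(\R^n)}$. On $B_{r_\mu}$ the function $\hat w_\mu$ equals $U_\mu$, whose $L^p$ norm is scale invariant. The volume form in conformal normal coordinates is $1+\mathcal O(|y|^N)$, and the tail $\int_{|y|>r_\mu} U_\mu^p$ is $\mathcal O((\mu/r_\mu)^n)\to 0$. Outside, $\hat w_\mu \lesssim \beta_\mu |y|^{2-n}$, so its $L^p$ contribution is of the same tail order. Hence the normalization factor is $\|U_1\|_{L^p}^{-1}(1+\mathrm o(1))$.

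The second step treats the numerator, and this is the key splitting. Write $\int_M h^{p-1}\hat w_\mu = \beta_\mu\int_M h^{p-1}G_{x_0} + \int_{B_{2r_\mu}} h^{p-1}(\hat w_\mu - \beta_\mu G_{x_0})$. The first term already has the desired form, since $\beta_\mu = \alpha_n(n-2)\omega_{n-1}\mu^{(n-2)/2}$. By the compactness of $\mathscr O_1$ (Lemma~\ref{lemma compactness}), $h$ is uniformly bounded, so the error term is bounded by
\[
C\int_{B_{2r_\mu}} \bigl(U_\mu + \beta_\mu G_{x_0}\bigr)\ud V_g .
\]
The second piece is $\mathcal O(\beta_\mu r_\mu^2)$, because $G_{x_0}\sim |y|^{2-n}$. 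For the first piece, a direct radial computation gives $\int_{B_{2r_\mu}} U_\mu \lesssim \mu^{(n-2)/2} r_\mu^2$, since $U_\mu \le \alpha_n \mu^{(n-2)/2}|y|^{2-n}$. Both are therefore $\mathrm o(\mu^{(n-2)/2})$ because $r_\mu\to 0$, with constants depending only on $\sup_{\mathscr O_1}\|h\|_\infty$. This gives the uniformity in $h$.

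The last step uses the identity $\|U_1\|_{L^p}^2 = |\Ss^n|^{(n-2)/n}$, which comes from $\|\nabla U_1\|^2=\|U_1\|_p^p$ together with $\|\nabla U_1\|_2^2 = Y_\circ\|U_1\|_p^2$. It reconciles \eqref{eq:minfty-formula} with the constant in the introduction's definition of $\mathsf m_\infty$.

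The main obstacle is to make sure the cutoff region contributes nothing at order $\mu^{(n-2)/2}$. This is where $r_\mu\to0$ enters, and it requires the pointwise comparison $U_\mu\lesssim \mu^{(n-2)/2}|y|^{2-n}$ to hold uniformly down to $|y|\sim\mu$. Near the origin the crude bound is integrable, $\int_{|y|<2r_\mu}|y|^{2-n}\ud y\sim r_\mu^2$, so no separate treatment of the core is needed. Uniformity of the convergence in $h$ then follows from the uniform $\mathcal C^0$ bounds on $\mathscr O_1$.
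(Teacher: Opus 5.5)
Your argument for the limit is correct and is essentially the paper's proof. The paper splits $M$ into the core $B_{r_\mu}$, the transition annulus and the outer region, and obtains the same bounds: $\mathcal O(\mu^{(n-2)/2} r_\mu^2)$ for the inner pieces, and the main term $\beta_\mu\int_M h^{p-1}G_{x_0}\ud V_g$ from the outer region. You fold the inner pieces into a single error term, and you make explicit the uniformity in $h\in\mathscr O_1$ that justifies exchanging $\sup$ and $\lim$.

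Your last step, however, is false and should be dropped. The two relations you cite give $\|U_1\|_{L^p(\R^n)}^{p-2}=Y_\circ$, that is, $\|U_1\|_{L^p(\R^n)}^2=Y_\circ^{(n-2)/2}=\bigl(\tfrac{n(n-2)}{4}\bigr)^{(n-2)/2}|\Ss^n|^{(n-2)/n}\neq|\Ss^n|^{(n-2)/n}$. So the constant in the introduction's definition of $\mathsf m_\infty(x_0)$ cannot be reconciled with \eqref{eq:minfty-formula}. This does not affect the proposition as stated, since its denominator is $\|U_1\|_{L^p(\R^n)}^2$.
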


\begin{proof}
Since $w_\mu = \hat{w}_\mu / \|\hat{w}_\mu\|_{L^p(M)}$ and
\[
\|\hat{w}_\mu\|_{L^p(M)}^p = \int_{\R^n} U_\mu^p \ud y + \mathrm{o}(1) = \|U_1\|_{L^p(\R^n)}^p + \mathrm{o}(1),\]
one has $\|\hat{w}_\mu\|_{L^p(M)} \to \|U_1\|_{L^p(\R^n)}$ as $\mu \to 0$. We decompose $M$ into three regions: the core $\Omega_1 := B_{r_\mu}(x_0)$, the transition annulus $\Omega_2 := B_{2r_\mu}(x_0) \setminus B_{r_\mu}(x_0)$, and the outer region $\Omega_3 := M \setminus B_{2r_\mu}(x_0)$. For any $h \in \mathscr{O}_1$, we estimate $\int_{\Omega_j} h^{p-1}\, \hat{w}_\mu \ud V_g$ on each region separately, uniformly in $h \in \mathscr O_1$.

\medskip
\noindent{\bf Claim~1} {\rm (Core integral):} {\it $\int_{\Omega_1} h^{p-1}\, \hat{w}_\mu \ud V_g = \mathcal{O}(\mu^{n/2})$ uniformly in $h \in \mathscr{O}_1$.}

\smallskip
\noindent{\it Proof.} In the core region $B_{r_\mu}(x_0)$, one has $\hat{w}_\mu = U_\mu$. Since $h$ is smooth and positive, it holds $h(x)^{p-1} = h(x_0)^{p-1} + \mathcal{O}(|y|)$ in conformal normal coordinates, whence
\[
\int_{B_{r_\mu}} h^{p-1}\, U_\mu \ud V_g = h(x_0)^{p-1} \int_{B_{r_\mu}} U_\mu \ud y + \mathcal{O}\!\left(\int_{B_{r_\mu}} |y|\, U_\mu \ud y\right).
\]
The rescaling $y = \mu z$ yields
\[
\int_{B_{r_\mu}} U_\mu \ud y = \alpha_n \mu^{(n-2)/2}\, \mu^n\, \mu^{-(n-2)} \int_{B_{r_\mu/\mu}} (1 + |z|^2)^{-(n-2)/2} \ud z = \alpha_n \mu^\frac{n+2}{2}\int_0^{r_\mu/\mu} \frac{|\Ss^{n-1}|\, s^{n-1}}{(1 + s^2)^{(n-2)/2}} \ud s.
\]
Since the integrand decays as $s^{n-1-(n-2)} = s$ for $s \gg 1$, it holds
\[
\int_0^{r_\mu/\mu} s^{n-1}(1+s^2)^{-(n-2)/2} \ud s=\mathcal{O}(r_\mu/\mu)^2 \quad {\rm with} \quad r_\mu = \sqrt{\mu},
\]
one obtains $(r_\mu/\mu)^2 = \mu^{-1}$, and so
\[
\int_{B_{r_\mu}} U_\mu \ud y = \mathcal{O}(\mu^\frac{n+2}{2} \mu^{-1}) = \mathcal{O}(\mu^{n/2}).
\]
This proves Claim~1.

\medskip
\noindent{\bf Claim~2} {\rm (Outer integral):} {\it $\int_{\Omega_3} h^{p-1}\, \hat{w}_\mu \ud V_g = \beta_\mu \int_M h^{p-1}\, G_{x_0} \ud V_g + \mathrm{o}(\beta_\mu)$.}

\smallskip
\noindent{\it Proof.} In the outer region $M \setminus B_{2r_\mu}(x_0)$, one has $\hat{w}_\mu = \beta_\mu\, G_{x_0}$, which implies
\[
\int_{M \setminus B_{2r_\mu}} h^{p-1}\, \hat{w}_\mu \ud V_g = \beta_\mu \int_{M \setminus B_{2r_\mu}} h^{p-1}\, G_{x_0} \ud V_g = \beta_\mu \int_M h^{p-1}\, G_{x_0} \ud V_g - \beta_\mu \int_{B_{2r_\mu}} h^{p-1}\, G_{x_0} \ud V_g.
\]
Since $G_{x_0}(x) = \mathcal{O}(|y|^{2-n})$ near $x_0$ and $h^{p-1}$ is bounded, one has
\[
\int_{B_{2r_\mu}} h^{p-1}\, G_{x_0} \ud V_g = \mathcal{O}\!\left(\int_0^{2r_\mu} s^{2-n}\, s^{n-1} \ud s\right) = \mathcal{O}(r_\mu^2) = \mathrm{o}(1),
\]
by the integrability of $G_{x_0}$ near $x_0$. It follows that
\[
\int_{M \setminus B_{2r_\mu}} h^{p-1}\, \hat{w}_\mu \ud V_g = \beta_\mu \int_M h^{p-1}\, G_{x_0} \ud V_g + \mathrm{o}(\beta_\mu).
\]
This proves Claim~2.

\medskip
\noindent{\bf Claim~3} {\rm (Transition annulus):} {\it $\int_{\Omega_2} h^{p-1}\, \hat{w}_\mu \ud V_g = \mathrm{o}(\mu^{(n-2)/2})$.}

\smallskip
\noindent{\it Proof.} In the transition region $B_{2r_\mu} \setminus B_{r_\mu}$, the interpolation $\hat{w}_\mu = \eta_\mu U_\mu + (1 - \eta_\mu)\beta_\mu G_{x_0}$ satisfies $|\hat{w}_\mu| \lesssim \beta_\mu\, |y|^{2-n}$ uniformly. Since $h^{p-1}$ is bounded, one has
\[
\int_{B_{2r_\mu} \setminus B_{r_\mu}} h^{p-1}\, \hat{w}_\mu \ud V_g = \mathcal{O}\!\left(\beta_\mu \int_{r_\mu}^{2r_\mu} s^{2-n}\, s^{n-1} \ud s\right) = \mathcal{O}(\beta_\mu\, r_\mu^2) = \mathcal{O}(\mu^{(n-2)/2}\, \mu) = \mathrm{o}(\mu^{(n-2)/2}).
\]
This proves Claim~3.

\medskip
Combining Claims~1--3, since $\beta_\mu = \alpha_n(n-2)\omega_{n-1}\, \mu^{(n-2)/2}$ and 
$n/2 > (n-2)/2$ for all $n\in\mathbb{N}$,
the integrals over $\Omega_1$ and $\Omega_2$ are absorbed into $\mathrm{o}(\beta_\mu)$, and one obtains
\[
\int_M h^{p-1}\, w_\mu \ud V_g = \frac{\beta_\mu}{\|U_1\|_{L^p(\R^n)}}\, \int_M h^{p-1}\, G_{x_0} \ud V_g + \mathrm{o}(\mu^{(n-2)/2}).
\]
Taking the supremum over $h \in \mathscr{O}_1$, squaring, and dividing by $\mu^{n-2}$, the limit exists and yields
\begin{align*}
    \mathsf{m}_\infty(x_0) &= \frac{\beta_\mu^2}{\mu^{n-2}\,\|U_1\|_{L^p(\R^n)}^2}\, \sup_{h \in \mathscr{O}_1} \left(\int_M h^{p-1}\, G_{x_0} \ud V_g\right)^2 \\
    &= \frac{[\alpha_n(n-2)\omega_{n-1}]^2}{\|U_1\|_{L^p(\R^n)}^2}\, \sup_{h \in \mathscr{O}_1} \left(\int_M h^{p-1}\, G_{x_0} \ud V_g\right)^2,
\end{align*}
which is \eqref{eq:minfty-formula}.
\end{proof}

\begin{remark}
\label{rmk:moment-positive}
Since Yamabe optimizers satisfy $h > 0$ on $M$ by the maximum principle and $G_{x_0} > 0$ on $M \setminus \{x_0\}$ by positivity of $L_g$, the integral $\int_M h^{p-1}\, G_{x_0} \ud V_g > 0$ for every $h \in \mathscr{O}_1$, whence $\mathsf{m}_\infty(x_0) > 0$. In particular, the mass dominance condition \eqref{mass dominance} is never vacuous.
\end{remark}

\subsection{Completion of the proof}

With the energy expansion and the sharp formula for $\mathsf{m}_\infty$ in hand, we turn to the proof of Theorem~\ref{theorem Schoen}.

\begin{proof}[Proof of Theorem~\ref{theorem Schoen}]
We use the test functions $w_\mu$ constructed above. Setting
\[
E_\mu := \mathcal{E}[w_\mu] = Y_\circ + a_\mu \quad {\rm and} \quad m_\mu := \mathsf{m}(w_\mu),
\]
the same computation as in \eqref{eq:S-expansion} and \eqref{eq:S-expansion-final} implies
\begin{equation}\label{eq:S-expansion-schoen}
\mathcal{S}_\gamma(w_\mu) = c_\gamma^{\rm bub} + \frac{Y_M}{Y_\circ^2}\, a_\mu + \frac{\gamma\, Y_M(Y_\circ - Y_M)}{2\, Y_\circ^2}\, m_\mu + \mathrm{o}(|a_\mu|) + \mathrm{o}(m_\mu).
\end{equation}
By Proposition~\ref{prop:schoen-energy}, it holds $a_\mu = -\beta_n \mathfrak{m}_{x_0}\, \mu^{n-2} + \mathrm{o}(\mu^{n-2})$, and by definition, one has 
\[
m_\mu \leqslant (\mathsf{m}_\infty(x_0) + \mathrm{o}(1))\, \mu^{n-2}.
\]
Inserting these into \eqref{eq:S-expansion-schoen}, one has
\begin{align*}
\mathcal{S}_\gamma(w_\mu) &\leqslant c_\gamma^{\rm bub} + \frac{Y_M\, \mu^{n-2}}{Y_\circ^2} \left[-\beta_n\, \mathfrak{m}_{x_0} + \frac{\gamma}{2}\,(Y_\circ - Y_M)\, \mathsf{m}_\infty(x_0)\right] + \mathrm{o}(\mu^{n-2}).
\end{align*}
The mass dominance condition \eqref{mass dominance} guarantees that the bracket is strictly negative, whence $\mathcal{S}_\gamma(w_\mu) < c_\gamma^{\rm bub}$ for $0 < \mu \ll 1$. Since $c_\gamma \leqslant \mathcal{S}_\gamma(w_\mu)$, this completes the proof.
\end{proof}

\begin{remark}
\label{rmk:comparison-yamabe}
In contrast with the Aubin case treated in \S\ref{sec:proof-thm2}, where the energy correction at order $\mu^4$ dominates the $\mathsf{m}$-term at order $\mu^{n-2}$ for $n \geqslant 6$, the Schoen energy correction \eqref{eq:schoen-energy} is of the same order $\mu^{n-2}$ as the $\mathsf{m}$-term. The mass dominance condition \eqref{mass dominance} is thus a necessary hypothesis for this approach, and its validity depends on the geometry of $(M,g)$.
\end{remark}

\appendix

\section{Energy expansions for the test functions}\label{sec:appendix-energy}
We provide the proofs of the classical energy expansions used in \S\ref{sec:proof-thm2} and \S\ref{sec:proof-schoen}. The computations for the Yamabe quotient are due to Aubin \cite{Aub} and Schoen \cite{Sch1}.
Our contribution is the stability expansion \eqref{eq:S-expansion-final}, which relies on these energy estimates as inputs.

\begin{proof}[Proof of Proposition~\ref{prop:aubin-energy}]
We follow \cite{Aub} and \cite[Section~6]{LP}. By conformal covariance of $L_g$, one has $\mathcal{E}_g[\hat{u}_\mu] = \mathcal{E}_{\tilde{g}}[\chi U_\mu]$. By a theorem of G\"unther \cite{Gun}, one can choose conformal normal coordinates so that $\det(\tilde{g}) = 1$, whence $\ud V_{\tilde{g}} = \ud y$.

In conformal normal coordinates centered at $x_0$, the inverse metric admits the expansion
\[
\tilde{g}^{ij}(y) = \delta^{ij} + \tfrac{1}{3}\, R_{ikj\ell}(x_0)\, y^k y^\ell + \mathcal{O}(|y|^3).
\]
Since $\det(\tilde{g}) = 1$, the volume element satisfies $\ud V_{\tilde{g}} = \ud y$ and the conformal normal condition yields $R_{\tilde{g}}(x_0) = 0$, so that $R_{\tilde{g}}(y) = \langle \nabla R_{\tilde{g}}(x_0), y \rangle + \mathcal{O}(|y|^2)$. Substituting the metric expansion into $\mathcal{E}_{\tilde{g}}[\chi U_\mu]$, one obtains
\begin{equation}\label{eq:aubin-energy-decomp}
\mathcal{E}_{\tilde{g}}[\chi U_\mu] = \int_{\R^n} |\nabla(\chi U_\mu)|^2 \ud y + c_n \int_{\R^n} R_{\tilde{g}}(y)\,(\chi U_\mu)^2 \ud y + I_\mu + \mathcal{O}(\mu^6),
\end{equation}
where
\begin{equation}
    \label{I mu}
    I_\mu := \tfrac{1}{3}\, R_{ikj\ell}(x_0) \int_{\R^n} y^k y^\ell\,\partial_i U_\mu\,\partial_j U_\mu \ud y.
\end{equation}
Recall that $\chi \in \mathcal{C}^\infty_c(\R^n)$ satisfies $\chi \equiv 1$ on $B_R(x_0)$ and $\supp(\chi) \subset B_{2R}(x_0)$ for some fixed $R > 0$ smaller than the injectivity radius.

\medskip
\noindent{\bf Claim~1} {\rm (Energy Expansion):} {\it $\mathcal{E}_{\tilde{g}}[\chi U_\mu] = 
Y_\circ\,\|U_1\|_{L^p(\R^n)}^2 + \mathcal{R}_n(\mu) + I_\mu + \mathcal{O}(\mu^{n-2})$, where}
\begin{equation}\label{eq:scalar-curvature-term}
    \mathcal{R}_n(\mu) := c_n \int_{\R^n} R_{\tilde{g}}(y)\, U_\mu^2 \ud y.
\end{equation}

\smallskip
\noindent{\it Proof.} By conformal invariance, the Euclidean gradient energy satisfies
\[
\int_{\R^n} |\nabla U_\mu|^2 \ud y = Y_\circ\,\|U_1\|_{L^p(\R^n)}^2.
\]
Since $\chi \equiv 1$ on $B_R$ and $|\chi| \leqslant 1$, one has $\nabla(\chi U_\mu) = \nabla U_\mu$ on $B_R$ and $|\nabla(\chi U_\mu)| \lesssim U_\mu$ on the annulus $B_{2R} \setminus B_R$. Since $U_\mu(y) = \mathcal{O}(\mu^{(n-2)/2} R^{-(n-2)})$ for $|y| \geqslant R$, the gradient energy of $\chi U_\mu$ on the annulus is $\mathcal{O}(\mu^{n-2})$. Similarly, $\int_{B_{2R} \setminus B_R} (\chi U_\mu)^2 \ud y = \mathcal{O}(\mu^{n-2})$. Therefore, it holds
\[
\int_{\R^n} |\nabla(\chi U_\mu)|^2 \ud y = \int_{\R^n} |\nabla U_\mu|^2 \ud y + \mathcal{O}(\mu^{n-2}) = Y_\circ\,\|U_1\|_{L^p(\R^n)}^2 + \mathcal{O}(\mu^{n-2}).
\]
For the scalar curvature term, since ${\rm supp}((\chi U_\mu)^2 - U_\mu^2)\subset B_{2R} \setminus B_R$, one has
\[
(\chi U_\mu)^2 - U_\mu^2\leqslant U_\mu^2 = \mathcal{O}(\mu^{n-2}\,|y|^{-2(n-2)}).
\]
Since $R_{\tilde{g}}$ is bounded on $B_{2R}$, it follows that
\[
c_n \int_{\R^n} R_{\tilde{g}}(y)\,\bigl[(\chi U_\mu)^2 - U_\mu^2\bigr] \ud y = \mathcal{O}\!\left(\mu^{n-2} \int_R^{2R} s^{n-1-2(n-2)} \ud s\right) = \mathcal{O}(\mu^{n-2}).
\]
In addition, since $R_{\tilde{g}}(x_0) = 0$, the scalar curvature integral in \eqref{eq:scalar-curvature-term}
satisfies 
\[
c_n \int_{\R^n} R_{\tilde{g}}(y)\,(\chi U_\mu)^2 \ud y = \mathcal{R}_n(\mu) + \mathcal{O}(\mu^{n-2}),
\]
where the first-order term $\langle \nabla R_{\tilde{g}}(x_0), y \rangle$ integrates to zero by parity. The dominant contribution comes from the second-order Taylor remainder $R_{\tilde{g}}(y) = \tfrac{1}{2}\partial_{ab}R_{\tilde{g}}(x_0)\,y^a y^b + \mathcal{O}(|y|^3)$. We write $\mathcal{R}_n(\mu) = \mathcal{H}_n(\mu) + \mathcal{O}(\mu^6)$, where the Hessian piece is
\[
\mathcal{H}_n(\mu) := c_n\,\frac{\partial_{ab}R_{\tilde{g}}(x_0)}{2} \int_{\R^n} y^a y^b\, \alpha_n^2\,\mu^{n-2}(|y|^2 + \mu^2)^{-(n-2)} \ud y.
\]
Passing to polar coordinates $y = r\omega$ with $r = |y|$ and $\omega \in \Ss^{n-1}$, the radial and angular variables separate. One can use the isotropy of the angular integral,
\begin{equation}\label{eq:isotropy}
 \int_{\Ss^{n-1}} \omega^a\omega^b \ud\omega = \tfrac{|\Ss^{n-1}|}{n}\,\delta^{ab},   
\end{equation}
to get
\[
\int_{\R^n} y^a y^b\, U_\mu^2 \ud y
= \int_0^R r^{n+1}(r^2 + \mu^2)^{-(n-2)} \ud r \alpha_n^2\mu^{n-2}\,\frac{|\Ss^{n-1}|}{n}\,\delta^{ab} + \mathcal{O}(\mu^{n-2}).
\]
The substitution $r = \mu s$ gives
\[
\int_0^R r^{n+1}(r^2 + \mu^2)^{-(n-2)} \ud r = \mu^{n+2-2(n-2)}\,\mathcal{J}_n(\mu) = \mu^{6-n}\,\mathcal{J}_n(\mu),
\]
where
\[
\mathcal{J}_n(\mu) := \int_0^{R/\mu} s^{n+1}(1+s^2)^{-(n-2)} \ud s.
\]
Together with the prefactor $\alpha_n^2\,\mu^{n-2}$ from $U_\mu^2$, the $\mu$-power in $\mathcal{H}_n(\mu)$ is $\mu^{n-2}\,\mu^{6-n} = \mu^4$. By \cite[Lemma~6.4]{LP}, conformal normal coordinates satisfy $\Delta R_{\tilde{g}}(x_0) = -\tfrac{1}{6}|W_g(x_0)|^2$, so the isotropy identity \eqref{eq:isotropy} applied to $\partial_{ab}R_{\tilde{g}}(x_0)$ yields
\[
\frac{\partial_{ab}R_{\tilde{g}}(x_0)}{2}\,\frac{|\Ss^{n-1}|}{n}\,\delta^{ab} = \frac{|\Ss^{n-1}|}{2n}\,\Delta R_{\tilde{g}}(x_0) = -\frac{|\Ss^{n-1}|}{12n}\,|W_g(x_0)|^2,
\]
and therefore
\begin{equation}\label{eq:Hmu}
\mathcal{H}_n(\mu) = -\frac{c_n\,\alpha_n^2\,|\Ss^{n-1}|}{12n}\,|W_g(x_0)|^2\,\mu^4\,\mathcal{J}_n(\mu).
\end{equation}
For $s \gg 1$, one has 
\[
(1+s^2)^{-(n-2)} = s^{-2(n-2)}(1 + s^{-2})^{-(n-2)} \sim s^{-2(n-2)},
\]
and so
\[
s^{n+1}(1+s^2)^{-(n-2)} \sim s^{n+1-2(n-2)} = s^{5-n}.
\]
The substitution $t = s^2$ yields
\[
\int_0^{+\infty} s^{n+1}(1+s^2)^{-(n-2)} \ud s = \tfrac{1}{2}\, B\!\left(\tfrac{n+2}{2},\, \tfrac{n-6}{2}\right),
\]
which converges if and only if $\tfrac{n-6}{2} > 0$, that is, $n \geqslant 7$. In this case, one has 
\[
\mathcal{J}_n(\mu) \to \tfrac{1}{2}\,B\!\bigl(\tfrac{n+2}{2},\,\tfrac{n-6}{2}\bigr) \quad {\rm as} \quad \mu \to 0.
\]
In addition, since $\mathcal{R}_n(\mu) = \mathcal{H}_n(\mu) + \mathcal{O}(\mu^6)$, substituting into \eqref{eq:Hmu} yields
\[
\mathcal{R}_n(\mu) = -C_n''\,|W_g(x_0)|^2\,\mu^4 + \mathrm{o}(\mu^4) \quad {\rm with} \quad C_n'' := \frac{c_n\,\alpha_n^2\,|\Ss^{n-1}|}{24n}\,B\!\left(\tfrac{n+2}{2},\,\tfrac{n-6}{2}\right) > 0,
\]
which proves Claim~1 for $n \geqslant 7$.

For $n = 6$, the integrand has a logarithmic singularity at infinity. Indeed, writing
\[
s^7(1+s^2)^{-4} = s^7\, s^{-8}\,(1 + s^{-2})^{-4} = s^{-1}\,(1 + s^{-2})^{-4} = s^{-1}\,\bigl(1 - 4s^{-2} + \mathcal{O}(s^{-4})\bigr) = s^{-1} + \mathcal{O}(s^{-3}),
\]
we get the estimate
\[
\int_0^{R/\mu} s^7(1+s^2)^{-4} \ud s = \int_0^1 s^7(1+s^2)^{-4} \ud s + \int_1^{R/\mu} \bigl[s^{-1} + \mathcal{O}(s^{-3})\bigr] \ud s = \log(R/\mu) + \mathcal{O}(1),
\]
where we used 
\[
\int_0^1 s^7(1+s^2)^{-4} \ud s<\infty.
\]

For $n = 6$, substituting $\mathcal{J}_6(\mu) = \log(R/\mu) + \mathcal{O}(1)$ into \eqref{eq:Hmu} and using the decomposition $\log(R/\mu) = \log(1/\mu) + \log R$, one obtains
\[
\mathcal{R}_n(\mu) = -\frac{c_6\,\alpha_6^2\,|\Ss^5|}{72}\,|W_g(x_0)|^2\,\mu^4\log(1/\mu) + \mathcal{O}(\mu^4),
\]
which by setting $C_6'' := \tfrac{c_6\,\alpha_6^2\,|\Ss^5|}{72}$ proves Claim~1 for $n = 6$.

\medskip
\noindent{\bf Claim~2} {\rm (Vanishing):}  {\it $I_\mu = 0$.}

\smallskip
\noindent{\it Proof.} We decompose the Riemann tensor at $x_0$ as
\[
R_{ikj\ell} = W_{ikj\ell} + \frac{1}{n-2}\left(R_{ij}\delta_{k\ell} - R_{i\ell}\delta_{kj} + R_{k\ell}\delta_{ij} - R_{kj}\delta_{i\ell}\right) - \frac{R}{(n-1)(n-2)}\left(\delta_{ij}\delta_{k\ell} - \delta_{i\ell}\delta_{kj}\right),
\]
where all quantities are evaluated at $x_0$ and $\tilde{g}_{ij}(x_0) = \delta_{ij}$. Since $U_\mu$ is radially symmetric, one has 
\[
\partial_i U_\mu = U_\mu'(|y|)\, \tfrac{y^i}{|y|} \quad {\rm and} \quad 
\partial_i U_\mu\,\partial_j U_\mu = (U_\mu')^2\, \frac{y^i y^j}{|y|^2}.
\]
Substituting into \eqref{I mu} and passing to polar coordinates $y = s\omega$ with $\omega \in \Ss^{n-1}$, the radial and angular variables separate:
\[
I_\mu = \frac{1}{3}\, R_{ikj\ell}(x_0) \int_0^\infty (U_\mu')^2\, s^{n+1} \ud s \int_{\Ss^{n-1}} \omega^i \omega^j \omega^k \omega^\ell \ud \omega:=\frac{1}{3}\, R_{ikj\ell}(x_0) \, C(n,\mu) \int_{\Ss^{n-1}} \omega^i \omega^j \omega^k \omega^\ell \ud \omega
\]
with $C_{\mu}>0$.
The isotropic tensor identity
\begin{equation}\label{eq:angular-average}
\int_{\Ss^{n-1}} \omega^i \omega^j \omega^k \omega^\ell \ud \omega = \frac{|\Ss^{n-1}|}{n(n+2)}\,\left(\delta^{ij}\delta^{k\ell} + \delta^{ik}\delta^{j\ell} + \delta^{i\ell}\delta^{jk}\right)
\end{equation}
then yields
\[
I_\mu = \frac{C(n,\mu)\,|\Ss^{n-1}|}{3\,n(n+2)}\, R_{ikj\ell}(x_0)\,\left(\delta^{ij}\delta^{k\ell} + \delta^{ik}\delta^{j\ell} + \delta^{i\ell}\delta^{jk}\right).
\]

We now show that $R_{ikj\ell}(x_0)\,T^{ikj\ell} = 0$ by checking each piece of the Kulkarni--Nomizu decomposition.

\smallskip
\noindent{\it Step 1 (Ricci and scalar curvature terms):} {\it $(R_{ikj\ell} - W_{ikj\ell})\,T^{ikj\ell} = 0$.}

\smallskip
\noindent Indeed, write $T^{ikj\ell} := \delta^{ij}\delta^{k\ell} + \delta^{ik}\delta^{j\ell} + \delta^{i\ell}\delta^{jk}$ for the isotropic tensor in \eqref{eq:angular-average}. Using $R_{ab}\,\delta^{ab} = R$ and $\delta_{ab}\,\delta^{ab} = n$, the three summands of $T^{ikj\ell}$ implies 
\[
R_{ij}\,\delta_{k\ell}\, T^{ikj\ell}
= R_{ij}\,\delta_{k\ell}\,\delta^{ij}\delta^{k\ell}
+ R_{ij}\,\delta_{k\ell}\,\delta^{ik}\delta^{j\ell}
+ R_{ij}\,\delta_{k\ell}\,\delta^{i\ell}\delta^{jk}
= Rn + R + R = (n+2)\,R.
\]
By the same index contractions, it follows
\[
R_{i\ell}\,\delta_{kj}\, T^{ikj\ell} = R_{k\ell}\,\delta_{ij}\, T^{ikj\ell} = R_{kj}\,\delta_{i\ell}\, T^{ikj\ell} = (n+2)\,R.
\]
Since the Ricci piece carries alternating signs, it holds
\[
\frac{1}{n-2}\bigl[(n+2)R - (n+2)R + (n+2)R - (n+2)R\bigr] = 0.
\]
Similarly, one has
\[
\delta_{ij}\delta_{k\ell}\, T^{ikj\ell} = n^2 + 2n = \delta_{i\ell}\delta_{kj}\, T^{ikj\ell},
\]
so $(\delta_{ij}\delta_{k\ell} - \delta_{i\ell}\delta_{kj})\, T^{ikj\ell} = 0$.
In particular, the Ricci and scalar curvature pieces of the Kulkarni--Nomizu decomposition both contract to zero against $T^{ikj\ell}$, so
\[
I_\mu = \tfrac{1}{3}\,W_{ikj\ell}(x_0)\, T^{ikj\ell}.
\]
This reduction is purely algebraic and does not require $R_{\tilde{g}}(x_0) = 0$.

\smallskip
\noindent{\it Step 2 (Weyl term):} {\it $W_{ikj\ell}\, T^{ikj\ell} = 0$.}

\smallskip
\noindent In fact, we check the three contractions individually.

\smallskip
\noindent\textrm{(i)} For $W_{ikj\ell}\,\delta^{ik}\delta^{j\ell}$, setting $k = i$ and $\ell = j$ yields $\sum_{i,j} W_{iijj}$. By the antisymmetric property $W_{abcd} = -W_{bacd}$, each summand satisfies 
\[
W_{iijj} = -W_{iijj} = 0.
\]

\smallskip
\noindent\textrm{(ii)} For $W_{ikj\ell}\,\delta^{ij}\delta^{k\ell}$, setting $j = i$ and $\ell = k$ produces $\sum_{i,k} W_{ikik}$. The trace-free property of the Weyl tensor reads $g^{ac}W_{abcd} = 0$ for all $b, d$. At $x_0\in M$, where $\tilde{g}_{ij} = \delta_{ij}$, this implies $\sum_a W_{akak} = 0$ for each $k$, hence 
\[
\sum_{i,k} W_{ikik} = 0.
\]

\smallskip
\noindent\textrm{(iii)} For $W_{ikj\ell}\,\delta^{i\ell}\delta^{jk}$, setting $\ell = i$ and $k = j$ yields $\sum_{i,j} W_{ijji}$. Again, using the antisymmetry $W_{abcd} = -W_{abdc}$ in the second pair, $W_{ijji} = -W_{ijij}$, so that
\[
\sum_{i,j} W_{ijji} = -\sum_{i,j} W_{ijij} = 0,
\]
where the last equality follows from~(ii) after relabeling $j \mapsto k$.

Since all three contractions vanish, one has $I_\mu = 0$. Moreover, since $\det(\tilde{g}) = 1$, the $L^p$ normalization has no volume correction, {\it i.e.}
\[
\|\chi U_\mu\|_{L^p}^p = \int_{\R^n} |\chi U_\mu|^p \ud y = \|U_1\|_{L^p(\R^n)}^p + \mathcal{O}(\mu^n),
\]
where the error arises only from the cutoff tail. This proves Claim~2.

\medskip
Since $I_\mu = 0$, the energy decomposition \eqref{eq:aubin-energy-decomp} and the gradient estimate from the proof of Claim~1 yields
\[
\mathcal{E}_{\tilde{g}}[\chi U_\mu] = Y_\circ\,\|U_1\|_{L^p(\R^n)}^2 + \mathcal{R}_n(\mu) + \mathcal{O}(\mu^{n-2}).
\]
By Claim~1, one has
\[
\mathcal{R}_n(\mu)=
\begin{cases}
  -C_n''\,|W_g(x_0)|^2\,\mu^4 + \mathrm{o}(\mu^4), &{\rm if} \; n \geqslant 7\\
  -C_6''\,|W_g(x_0)|^2\,\mu^4\log(1/\mu) + \mathcal{O}(\mu^4), &{\rm if} \; n = 6.
\end{cases}
\]
Finally, dividing by
\[
\|\chi U_\mu\|_{L^p}^2 = \|U_1\|_{L^p(\R^n)}^2 + \mathrm{o}(1)
\]
and setting $C_n > 0$ accordingly, one obtains $a_\mu$ as stated.
\end{proof}

\begin{proof}[Proof of Proposition~\ref{prop:schoen-energy}]
As before, we decompose $M$ into the core $B_{r_\mu}(x_0)$, the transition annulus $A_\mu(x_0):=B_{2r_\mu}(x_0) \setminus B_{r_\mu}(x_0)$, and the outer region $M \setminus B_{2r_\mu}(x_0)$, and compute the integral \eqref{eq:energy-weak} on each. The energy satisfies the weak formulation
\begin{equation}\label{eq:energy-weak}
\mathcal{E}[\hat{w}_\mu] = \int_M \hat{w}_\mu\, L_g \hat{w}_\mu \ud V_g,
\end{equation}
where $L_g = -\Delta_g + c_n R_g$ is the conformal Laplacian.

\medskip
\noindent{\bf Claim~1} {\rm (Bubble core):} {\it $\int_{B_{r_\mu}(x_0)} \hat{w}_\mu\, L_g \hat{w}_\mu \ud V_g = Y_\circ\,\|U_1\|_{L^p(\R^n)}^2 + \mathrm{o}(\mu^{n-2})$.}

\smallskip
\noindent{\it Proof.} In this region $\hat{w}_\mu = U_\mu$. In conformal normal coordinates, the metric is flat to leading order, whence
\[
\int_{B_{r_\mu}} U_\mu\, L_g U_\mu \ud V_g = \int_{B_{r_\mu}} |\nabla U_\mu|^2 \ud y + \mathrm{o}(\mu^{n-2}) = \int_{\R^n} |\nabla U_\mu|^2 \ud y + \mathrm{o}(\mu^{n-2}),
\]
where we used that the tail satisfies
\[
\int_{\R^n \setminus B_{r_\mu}} |\nabla U_\mu|^2 \ud y = \mathcal{O}(\mu^{n-2} r_\mu^{4-n}) = \mathrm{o}(\mu^{n-2})
\quad {\rm for} \quad r_\mu = \mu^{1/2}.\]
In addition, by conformal invariance, the Euclidean energy of the rescaled bubbles is
\[
\int_{\R^n}|\nabla U_\mu|^2 \ud x=Y_\circ\,\|U_1\|_{L^p(\R^n)}^2,
\]
which proves Claim~1.

\medskip
\noindent{\bf Claim~2} {\rm (Outer region):} {\it $\int_{M \setminus B_{2r_\mu}(x_0)} \hat{w}_\mu\, L_g \hat{w}_\mu \ud V_g = 0$.}

\smallskip
\noindent{\it Proof.} Here $\hat{w}_\mu = \beta_\mu\, G_{x_0}$. Since $L_g G_{x_0} = \delta_{x_0}$ and $x_0 \notin M \setminus B_{2r_\mu}(x_0)$, the weak formulation \eqref{eq:energy-weak} yields
\[
\int_{M \setminus B_{2r_\mu}} \hat{w}_\mu\, L_g \hat{w}_\mu \ud V_g = \beta_\mu^2 \int_{M \setminus B_{2r_\mu}} G_{x_0}\, L_g G_{x_0} \ud V_g = 0,
\]
since $L_g G_{x_0} = 0$ on $M \setminus \{x_0\}$, and Claim~2 is proved.

\medskip
\noindent{\bf Claim~3} {\rm (Transition annulus):} {\it The transition annulus contributes $-\beta_n\, \|U_1\|_{L^p(\R^n)}^2\, \mathfrak{m}_{x_0}\, \mu^{n-2} + \mathrm{o}(\mu^{n-2})$, with $\beta_n$ as in \eqref{eq:beta-def}.}

\smallskip
\noindent{\it Proof.} The matching constant $\beta_\mu = \alpha_n(n-2)\omega_{n-1}\mu^{(n-2)/2}$ ensures that $U_\mu$ and $\beta_\mu G_{x_0}$ agree to leading order at $|y|^{2-n}$, so that
\[
\hat{w}_\mu(x) - U_\mu(y) = \beta_\mu\, \mathfrak{m}_{x_0} + \mathcal{O}(|y|^{3-n}\mu^{(n-2)/2})
\]
in the transition region. The mass $\mathfrak{m}_{x_0}$ enters via integration by parts: the mismatch between $\hat{w}_\mu$ and $U_\mu$ at the inner boundary $\partial B_{r_\mu}$ produces a surface integral proportional to $\mathfrak{m}_{x_0}$, while the cutoff and curvature contributions are of lower order. The precise computation of the coefficient $\beta_n$ follows from the integration-by-parts argument of Schoen \cite{Sch1} (see also \cite{Schoen1989} and \cite[Proposition~7.1]{LP}).

\medskip
At last, by combining Claims~1--3, one obtains
\[
\mathcal{E}[\hat{w}_\mu] = Y_\circ\, \|U_1\|_{L^p(\R^n)}^2 - \beta_n\, \|U_1\|_{L^p(\R^n)}^2\, \mathfrak{m}_{x_0}\, \mu^{n-2} + \mathrm{o}(\mu^{n-2}).
\]
Dividing by $\|\hat{w}_\mu\|_{L^p(M)}^2 = \|U_1\|_{L^p(\R^n)}^2 + \mathrm{o}_\mu(1)$, one obtains \eqref{eq:schoen-energy} with $\beta_n$ as in \eqref{eq:beta-def}.
\end{proof}

\section*{Acknowledgments}
J.\,H.\,A.\ is partially supported by FAPESP grants \#2020/07566-3, \#2021/15139-0, and \#2023/15567-8, and CNPq grants \#409764/2023-0, \#443594/2023-6, \#441922/2023-6, and \#306014/2025-4.
T.\,K.\ is funded by the Deutsche Forschungsgemeinschaft (DFG, German Research Foundation), project numbers 555837013 and 561401741.
J.\,R.\ is funded by the Deutsche Forschungsgemeinschaft (DFG, German Research Foundation), project number 561401741.
J.\,W.\ is partially supported by HKGRF grant \#14309824.

\section*{Notation}

We collect the main notation used throughout the paper.
\begin{itemize}[leftmargin=2em, itemsep=2pt]
\item[--] $(M, g)$ is a closed $n$-dimensional Riemannian manifold with $n \geqslant 3$;
\item[--] $R_g$, $\Delta_g$, $\ud V_g$ are the scalar curvature, Laplace--Beltrami operator, and volume form;
\item[--] $H^1(M)$ is the Sobolev space on $M$ with standard norm $\|u\|_{H^1(M)}^2 := \int_M (|\nabla u|^2 + |u|^2) \ud V_g$;
\item[--] $p = 2^*:= \tfrac{2n}{n-2}$ is the critical Sobolev exponent of $H^1(M)$;
\item[--] $\gamma \geqslant 2$ is the stability exponent;
\item[--] $\omega_{n-1} = |\Ss^{n-1}|$ is the volume of the unit $(n-1)$-sphere;
\item[--] $c_n = \tfrac{n-2}{4(n-1)}$ is the conformal normalizing constant;
\item[--] $L_g = -\Delta_g + c_n R_g$ is the conformal Laplacian;
\item[--] $\mathcal{E}[u] = \int_M(|\nabla u|^2 + c_n R_g u^2)\ud V_g$ is the conformal energy;
\item[--] $\mathcal{Q}(u) = \mathcal{E}[u]/\|u\|_{L^p(M)}^2$ is the Yamabe quotient;
\item[--] $Y_M = Y(M,[g])$ is the Yamabe constant of $(M,[g])$;
\item[--] $g_\circ$ is the round metric on $\Ss^n$;
\item[--] $Y_\circ = Y(\Ss^n,[g_\circ]) = \tfrac{n(n-2)}{4}\,\omega_n^{2/n}$ is the Yamabe constant of the round 
metric, where $\omega_n = |\Ss^n|$;
\item[--] $U_1(y) = \alpha_n(1/(1+|y|^2))^{(n-2)/2}$ is the standard Aubin--Talenti bubble;
\item[--] $U_\mu(y) = \alpha_n(\mu/(\mu^2+|y|^2))^{(n-2)/2}$ is the scaled Aubin--Talenti bubble in \eqref{eq:talenti-aubin};
\item[--] $\alpha_n = [n(n-2)]^{(n-2)/4}$ is a normalizing dimensional constant;
\item[--] $\mathcal{S}_\gamma: H^1(M)\setminus \mathscr{O}\to\R$ is the stability quotient in \eqref{defn_stability};
\item[--] $\mathscr{O} = \{ u \in \mathcal{C}^\infty(M) : \mathcal{Q}(u) = Y_M,\, u \geqslant 0 \}$ is the Yamabe optimizer manifold in \eqref{eq:optimizer-set};
\item[--]  $\mathscr{O}_1 = \{ h \in \mathscr{O} : \|h\|_{L^p} = 1 \}$ is the set of unit $L^p$-normalized optimizers in \eqref{O1 compact};
\item[--] $c_\gamma(M,g)$ is the best stability constant in \eqref{eq:ENS-def};
\item[--] $c_\gamma^{\loc}(M,g)$ is the local stability constant in \eqref{eq:stability_constant_loc};
\item[--] $c_\gamma^{\rm bub}(M,g)$ is the bubble threshold in \eqref{eq:stability_constant_bubble};
\item[--] $G_{x_0}$ is the Green function of $L_g$ with pole at $x_0 \in M$ in \eqref{Green function expansion};
\item[--] $\mathfrak{m}_{x_0}$ is the mass of $G_{x_0}$ in \eqref{Green function expansion};
\item[--] $\mathsf{m}: H^1(M)\to [0,\infty)$ is the stability mass in \eqref{eq:momentum} (see Remark~\ref{rmk:stability-mass});
\item[--] $\mathsf{m}_\infty(x_0)>0$ is the rescaled stability mass coefficient in \eqref{eq:minfty-def};
\item[--] $\beta_n > 0$ is the dimensional constant in \eqref{eq:beta-def};
\item[--] $\tilde{g} = \Lambda^{4/(n-2)} g$ is the conformal normal metric centered at $x_0$;
\item[--] $\chi \in \mathcal{C}^\infty_c(\R^n)$ is a smooth cutoff function with $\chi \equiv 1$ near the origin;
\item[--] $u_\mu = \hat{u}_\mu / \|\hat{u}_\mu\|_{L^p(M)}$ is the normalized Aubin test function in \eqref{eq:bubbles_family};
\item[--] $w_\mu = \hat{w}_\mu / \|\hat{w}_\mu\|_{L^p(M)}$ is the normalized Schoen test function in \eqref{eq:schoen_family};
\item[--] $a_\mu$ is the energy correction $\mathcal{E}[u_\mu] - Y_\circ$ or $\mathcal{E}[w_\mu] - Y_\circ$;
\item[--] $m_\mu = \mathsf{m}(u_\mu)$ or $\mathsf{m}(w_\mu)$ is the stability mass of the test function;
\item[--] $\dist(u, \mathscr{O}) = \inf_{h \in \mathscr{O}} \mathcal{E}[u-h]^{1/2}$ is the $H^1$-distance to the optimizer manifold in \eqref{dist definition};
\item[--] $\mathrm{o}(\cdot)$, $\mathcal{O}(\cdot)$ is the standard asymptotic notation;
\item[--] $\mathrm{o}_k(1)$ denotes any quantity that tends to zero as $k \to \infty$, and similarly $\mathrm{o}_\mu(\cdot)$ as $\mu \to 0$;
\item[--] $f \lesssim g$ means $f \leqslant C\, g$ for a constant $C > 0$ depending only on $n$ and $(M,g)$; similarly $f \gtrsim g$ means $f \geqslant C\, g$;
\item[--] $C_*$ denotes a positive constant whose subscript indicates its dependence, {\it e.g.}, $C_\eps$ depends on $\eps$ (and on fixed data $n, g$).
\end{itemize}
In the proofs in \S\ref{sec:proof-thm1}--\S\ref{sec:proof-schoen}, the underlying manifold $(M,g)$ and exponent $\gamma \geqslant 2$ are fixed and we use the shorthand $c_\gamma$, $c_\gamma^{\loc}$, $c_\gamma^{\rm bub}$ throughout.

\bibliography{references.bib}
\bibliographystyle{abbrv}

\end{document}